\title[Quasi-random graphs, subgraph counts and graph limits]
{More on quasi-random graphs, subgraph counts and graph limits}
\date{26 May, 2014}
\author{Svante Janson}
\thanks{SJ partly supported by the Knut and Alice Wallenberg Foundation}
\address{Department of Mathematics, Uppsala University, PO Box 480,
SE-751~06 Uppsala, Sweden}
\email{svante.janson@math.uu.se}
\newcommand\urladdrx[1]{{\urladdr{\def~{{\tiny$\sim$}}#1}}}
\author{Vera T. S\'os}
\thanks{VTS partly supported by OTKA 101535.2012}
\address{A. R\'enyi Institute of Mathematics, Budapest, Hungary}
\email{t.sos.vera@renyi.mta.hu}
\numberwithin{equation}{section}
\renewcommand\le{\leqslant}
\renewcommand\ge{\geqslant}
\newtheorem{theorem}{Theorem}[section]
\newtheorem{lemma}[theorem]{Lemma}
\newtheorem{conjecture}[theorem]{Conjecture}
\theoremstyle{definition}
\newtheorem{example}[theorem]{Example}
\newtheorem{definition}[theorem]{Definition}
\newtheorem{definitions}[theorem]{Definition}
\newtheorem{problem}[theorem]{Problem}
\newtheorem{remark}[theorem]{Remark}
\newtheorem*{ack}{Acknowledgement}
\theoremstyle{remark}
\newenvironment{romenumerate}[1][0pt]{
\addtolength{\leftmargini}{#1}\begin{enumerate}
 }{\end{enumerate}}
\newenvironment{alphenumerate}[1][0pt]{
\addtolength{\leftmargini}{#1}\begin{enumerate}
 }{\end{enumerate}}
\newcounter{oldenumi}
{\setcounter{oldenumi}{\value{enumi}}
\begin{romenumerate} \setcounter{enumi}{\value{oldenumi}}}
{\end{romenumerate}}
\newcounter{thmenumerate}
\newenvironment{thmenumerate}
{\setcounter{thmenumerate}{0}%
 \def\item{\par
 \refstepcounter{thmenumerate}\textup{(\roman{thmenumerate})\enspace}}
}
{}
\newcounter{romxenumerate}   
\newcounter{xenumerate}   
\newcommand\pfitemx[1]{\par#1:}
\newcommand\pfitemref[1]{\pfitemx{\ref{#1}}}
\newcounter{steps}
\newcommand\stepx{\smallskip\noindent\refstepcounter{steps}%
 \emph{Step \arabic{steps}:}\noindent}
\newcommand{\refT}[1]{Theorem~\ref{#1}}
\newcommand{\refL}[1]{Lemma~\ref{#1}}
\newcommand{\refR}[1]{Remark~\ref{#1}}
\newcommand{\refS}[1]{Section~\ref{#1}}
\newcommand{\refP}[1]{Problem~\ref{#1}}
\newcommand{\refD}[1]{Definition~\ref{#1}}
\newcommand{\refE}[1]{Example~\ref{#1}}
\newcommand{\refConj}[1]{Conjecture~\ref{#1}}
\xdef\klockan{\the\count1.0\the\count255}
\xdef\klockan{\the\count1.\the\count255}\fi
\newcommand\nopf{\qed}   
\newcommand{\sumim}{\sum_{i=1}^m}
\newcommand{\sumir}{\sum_{i=1}^r}
\newcommand\set[1]{\ensuremath{\{#1\}}}
\newcommand\xpar[1]{(#1)}
\newcommand\bigpar[1]{\bigl(#1\bigr)}
\newcommand\Bigpar[1]{\Bigl(#1\Bigr)}
\def\rompar(#1){\textup(#1\textup)}    
\newcommand\uppar[1]{\textup(#1\textup)}    
\newcommand\Bigparfrac[2]{\Bigpar{\frac{#1}{#2}}}
\def\xexp(#1){e^{#1}}
\newcommand\floor[1]{\lfloor#1\rfloor}
\newcommand\nn{[n]}
\newcommand\ntoo{\ensuremath{{n\to\infty}}}
\newcommand\punkt{.\spacefactor=1000}    
\newcommand\ie{i.e\punkt}
\newcommand\eg{e.g\punkt}
\newcommand\viz{viz\punkt}
\newcommand\cf{cf\punkt}
\newcommand{\aex}{a.e\punkt}
\newcommand\ii{\mathrm{i}}
\newcommand\bbR{\mathbb R}
\newcommand\bbC{\mathbb C}
\newcommand\bbZ{\mathbb Z}
\newcounter{CC}
\newcounter{cc}
\newcommand\supp{\operatorname{supp}}
\newcommand\aut{\operatorname{aut}}
\newcommand\ga{\alpha}
\newcommand\gb{\beta}
\newcommand\gf{\varphi}
\newcommand\gam{\gamma}
\newcommand\gl{\lambda}
\newcommand\gL{\Lambda}
\newcommand\gs{\sigma}
\newcommand\eps{\varepsilon}
\renewcommand\phi{\xxx}  
\newcommand\cB{\mathcal B}
\newcommand\cP{\mathcal P}
\newcommand\etta{\boldsymbol1}
\newcommand\qw{^{-1}}
\newcommand\intoi{\int_0^1}
\newcommand\oi{[0,1]}
\newcommand\setoi{\set{0,1}}
\newcommand\dd{\,\mathrm{d}}
\newcommand\lhs{left-hand side}
\newcommand\aaf{A_1,\dots,A_\ff}
\newcommand\aafx{A_1\times\dots\times A_\ff}
\newcommand\aam{A_1,\dots,A_m}
\newcommand\aamx{A_1\times\dots\times A_m}
\newcommand\aar{A_1,\dots,A_r}
\newcommand\garr{\ga_1,\dots,\ga_r}
\newcommand\fAAm{f^{A_2,\dots, A_m}}
\newcommand\uuf{U_1,\dots,U_\ff}
\newcommand\uum{U_1,\dots,U_m}
\newcommand\uur{U_1,\dots,U_r}
\newcommand\xxf{x_1,\dots,x_\ff}
\newcommand\xxm{x_1,\dots,x_m}
\newcommand\xxr{x_1,\dots,x_r}
\newcommand\xim[1]{#1_{i_1},\dots,#1_{i_m}}
\newcommand\yym{y_1,\dots,y_m}
\newcommand\xxgsm{x_{\gs(1)},\dots,x_{\gs(m)}}
\newcommand\ff{{|F|}}
\newcommand\eff{^{e(F)}}
\newcommand\prodif{\prod_{i=1}^\ff}
\newcommand\prodim{\prod_{i=1}^m}
\newcommand\prodjm{\prod_{j=1}^m}
\newcommand\intaafx{\int_{\aafx}}
\newcommand\intaamx{\int_{\aamx}}
\newcommand\tPsiq{\widetilde\Psi}
\newcommand\tPsiqq{\widetilde\Psi^{*}}
\newcommand\fS{\mathfrak S}
\newcommand\psifx[1]{\Psi_{F,#1}}
\newcommand\psifw{\psifx W}
\newcommand\tpsifx[1]{\widetilde\Psi_{F,#1}}
\newcommand\tpsifw{\tpsifx W}
\newcommand\tpsiqfx[1]{\tPsiq_{F,#1}}
\newcommand\tpsiqqfw{\tPsiqq_{F,W}}
\newcommand\tpsiqfw{\tpsiqfx W}
\newcommand\phiw{\Phi_W}
\newcommand\Phix{\widehat\Phi} 
\newcommand\mpb{measure preserving bijection}
\newcommand\pqr{$p$-\qr}
\newcommand\qr{quasi-random}
\newcommand\gnq{\ensuremath{(G_n)}}
\newcommand\gax{\gam}
\newcommand\gc{\ga}
\newcommand\xij{_{ij}}
\newcommand\tN{\widetilde N}
\newcommand\uufgs{U_{\gs(1)},\dots,U_{\gs(\ff)}}
\newcommand\qrp{\qr{} property}
\newcommand\qrpp{\qr{} properties}
\newcommand\pqrp{\pqr{} property}
\newcommand\gamm{\ga_1,\dots,\ga_m}
\newcommand\ctP{\widetilde\cP}
\newcommand\cPu{\cP'}
\newcommand\cPl{\cP}
\newcommand\gay{\ga,\dots,\ga}
\newcommand\cplw{\cP_*}
\newcommand\cpuw{\cP'_*}
\newcommand\ctpw{\ctP_*}
\newcommand\mmm{1/m,\dots,1/m}
\newcommand\eef{^{e(F)}}
\newcommand\gluvs{\gL_{F;u,v,s}}
\newcommand\gLx{\hat\Lambda}
\newcommand\glxuvs{\gLx_{F;u,v,s}}
\newcommand\aamq{\set{\aam}}
\newcommand\nnm{n_1,\dots,n_m}
\newcommand\boc{B_0\comp}
\newcommand\comp{^{\mathsf{c}}}
\newcommand\hf{\widehat f}
\newcommand\tf{\tilde f}
\newcommand\hg{\widehat g}
\newcommand\hh{\widehat h}
\newcommand\bh{\bar h}
\newcommand\bW{\overline W}
\newcommand\sumii{\sum_{i_1<\dots<i_m}}
\newcommand\sumiir{\sum_{i_1<\dots<i_m\le r}}
\newcommand\symm{_{\textsf s}}
\newcommand\xN{\widehat N}
\newcommand\ctxP[1]{\ctP_{#1}}
\newcommand\cxxP[1]{\widehat P_{#1}}
\newcommand\cPz{\cP_{2,1}}
\newcommand\ctPz{\ctxP{2,1}}
\newcommand{\Lovasz}{Lov\'asz}
\newcommand{\maple}{\texttt{Maple}}
\begin{document}

\subjclass[2010]{05C99} 

\begin{abstract} 
We study some properties of graphs (or, rather, graph sequences)
defined by demanding that the number of
subgraphs of a given type, with vertices in subsets of given sizes,
approximatively equals the number expected in a random graph.
It has been shown by several authors 
that several such conditions are quasi-random, but that there are
exceptions. 
In order to understand this better, we investigate some new
properties of this type. We show that these properties too are quasi-random,
at least in some cases; however, there are also cases that are left as open
problems, and we discuss why the proofs fail in these cases. 

The proofs are based on the theory of graph limits; 
and on the method and results developed by Janson (2011),
this translates
the combinatorial problem to an analytic problem, which then is translated to an
algebraic problem.
\end{abstract}

\maketitle

\section{Introduction}\label{S:intro}

Consider a sequence of graphs $(G_n)$, with $|G_n|\to\infty$ as \ntoo.
\citet{Thomason87a,Thomason87b} and \citet{ChungGW:quasi} showed
that a number of different 'random-like' properties of 
the sequence $(G_n)$
are equivalent, and we say that \gnq{} is \emph{\qr}, or more precisely
\emph{\pqr}, if 
it satisfies these properties. (Here $p\in\oi$ is a
parameter.)
Many other equivalent properties of different types
have later been added by various authors. We say that a property of
sequences $(G_n)$ of graphs (with $|G_n|\to\infty$) is a 
\emph{\qrp} (or more specifically a \emph{\pqrp}) if it 
characterizes \qr{} (or \pqr) sequences of graphs.

One of the \qrpp{} considered by \citet{ChungGW:quasi}
is based on subgraph counts, see \eqref{pqr} below.
Further \qrpp{} based on restricted 
subgraph count properties have been found by 
\citet{ChungG},
\citet{SS:nni,SS:ind},
\citet{Shapira},  \citet{ShapiraYuster,ShapiraYuster:hyper},
\citet{Yuster},
\citet{SJ234}, 
\citet{HuangLee}, 
see \refS{Smain}.

The purpose of the present paper is to continue the study of such properties
by considering some further cases not treated earlier;
in particular (Theorems \ref{TA} and \ref{T=}), we prove that some further
properties of this type are \qr. Our main purpose is not to just add to the
already long list of \qr{} properties; we hope that this study will
contribute to the understanding of this type of \qr{} properties, 
and in particular explain why the case in \refT{T=} is more difficult than
the one in \refT{TA}. (See also \refS{Sless} for a discussion of further
similar properties.)

We use the method of \citet{SJ234} based on graph limits.
We assume that the reader is familiar with the basics of the theory of
graph limits and graphons developed in \eg{} \citet{LSz} and \citet{BCLSV1};
otherwise, see 
\citet{SJ234} (for the present context) or the comprehensive book by 
\citet{Lovasz}.
As is well-known, there is a simple characterization of \qr{} sequences in
terms of graph limits:
a sequence $(G_n)$ with $|G_n|\to\infty$ is \pqr{} if and only if
$G_n\to W_p$, where $W_p$ is the graphon that is constant with $W_p=p$
\cite{BCLSV1,BCLSV2,LSz},
see also \cite[Section 1.4.2 and Example 11.37]{Lovasz}.
(Indeed, quasi-random graphs form one of the roots of graph limit theory.) 

The idea of the method is to use this characterization
to
translate the property of graph sequences
to a property of graphons, and then show that only constant graphons satisfy
this property.
It turns out that this leads to both analytic (\refS{Sanal})
and algebraic (\refS{Salg}) problems, which
we find interesting in themselves.
We have only partly succeeded to solve these problems, so we leave several
open problems. 

\begin{remark}
  Many of the references above use Szemer\'edi's regularity lemma 
as their main tool to study quasi-random properties; it has been known since
\cite{SS:Sze} 
  that quasi-randomness can be characterized using Szemer\'edi partitions.
It is also well-known that there are strong connections between
Szemer\'edi's regularity lemma and graph limits, 
see \cite{BCLSV1,LSz:Sz,Lovasz}, 
so on a deeper level the methods are related
although they superficially look very different. (It thus might be possible to
translate arguments of one type to the other, although it is far from clear
how this might be done.)
Both methods lead also to
the same (sometimes difficult) algebraic problems. 
As discussed in \cite{SJ234}, the method used here
eliminates the many small error terms in the regularity lemma approach; on
the other 
hand, it leads to analytic problems with no direct counterpart in the other
approach.
It is partly a matter of taste what type of
arguments one prefers. 
\end{remark}

\begin{ack}
This research was begun 
during the workshop 
\emph{Graph limits, homomorphisms and structures II} at
Hrani{\v c}n{\'i} Z{\'a}me{\v c}ek,  Czech Republic, 2012;
parts were also done during the workshop
\emph{Combinatorics and Probability}
at Mathematisches Forschungsinstitut Oberwolfach, 2013.
We thank the organisers for providing us with these opportunities.
\end{ack}

\section{Notation, background and main results}\label{Smain}

All graphs in this paper are finite, undirected and simple.
The vertex and edge sets of a graph $G$ are denoted by
$V(G)$ and $E(G)$. We write $|G|:=|V(G)|$ for the
number of vertices of $G$, and $e(G):=|E(G)|$ for the
number of edges.
As usual, $\nn:=\set{1,\dots,n}$.

All unspecified limits in this paper are as \ntoo, and $o(1)$
denotes a quantity that tends to $0$ as \ntoo.
We will often use $o(1)$
for quantities that depend on some subset(s)
of a vertex set $V(G)$; 
we then always
implicitly assume that the convergence is uniform for all choices of
the subsets. 
We interpret $o(a_n)$ for a given sequence $a_n$ similarly.

Let $F$ and $G$ be labelled graphs.
For convenience, we assume throughout the paper (when it matters) that
$V(F)=[\ff]=\set{1,\dots,\ff}$. We generally let $m=|F|$.

\begin{definitions}\label{DN}
  \begin{thmenumerate}
  \item 
$N(F,G)$ is the number of labelled copies of $F$ in $G$
(not necessarily induced); 
equivalently, $N(F,G)$ is the
number of injective maps $\gf:V(F)\to V(G)$ that are graph
homomorphisms
(\ie, if $i$ and $j$ are adjacent in $F$, then
$\gf(i)$ and $\gf(j)$ are adjacent in $G$).

\item 
If $\uuf$ are subsets of $V(G)$, 
let
$N(F,G;\uuf)$ be the number of 
labelled copies of $F$ in $G$ with the $i$th vertex
in $U_i$; equivalently, $N(F,G;\uuf)$ is the number of
injective graph homomorphisms 
$\gf:F\to G$ such that $\gf(i)\in U_i$ for every 
$i\in V(F)$. 
(Note that we  consider a fixed labelling of the vertices of $F$
and count the number of copies where vertex $i$ is in $U_i$, so the
labelling and the ordering
of $\uuf$ are important.)

\item 
We  also define a symmetrized version 
$\tN(F,G;\uuf)$ 
by taking the average over all labellings of $F$; equivalently,
\begin{equation}\label{tN}
  \tN(F,G;\uuf):=\frac{1}{|F|!}\sum_{\gs}N(F,G;\uufgs),
\end{equation}
summing over all permutations $\gs$ of $\set{1,\dots,\ff}$.
  \end{thmenumerate}
\end{definitions}

In (ii) and (iii), 
we are often interested in the case when $\uuf$ are pairwise disjoint, 
and then $\tN(F,G;\uuf)$ is 
the number of labelled copies of $F$ in $G$ with one
vertex in each set $U_i$ (in any order),
divided by $1/|F|!$.

\begin{remark}\label{R=}
If either $U_1=\dots=U_{|F|}$ or $F=K_m$  for some $m$, then
$  \tN(F,G;\uuf):=N(F,G;\uuf)$, and the symmetrized  version $\tN$ is
equal to $N$.
\end{remark}

One of the several equivalent definitions of \qr{} graphs by
\citet{ChungGW:quasi} is the following using the subgraph counts $N(F,G)$:
\begin{theorem}[\citet{ChungGW:quasi}]
A sequence of graphs
\gnq{} with $|G_n|\to\infty$
is \pqr{} if and only if, 
for every graph $F$,
\begin{equation}
  \label{pqr}
N(F,G_n)= (p\eff+o(1)) |G_n|^\ff.
\end{equation}
\qed
\end{theorem}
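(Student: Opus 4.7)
The plan is to reduce the statement to the standard characterization of graph-limit convergence via subgraph densities, using the fact recalled in the introduction: $\gnq$ is \pqr{} if and only if $G_n\to W_p$, where $W_p\equiv p$ is the constant graphon.

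First I would convert the count $N(F,G_n)$ into a density. Writing $n=|G_n|$, one has $N(F,G_n)=\tinj(F,G_n)\cdot n(n-1)\cdots(n-\ff+1)$, where $\tinj$ denotes the injective homomorphism density. Since $\ff$ is fixed and $n\to\infty$, $n(n-1)\cdots(n-\ff+1)=n\eff(1+o(1))$, and moreover $|t(F,G_n)-\tinj(F,G_n)|=O(1/n)$, where $t(F,\cdot)$ is the ordinary (not necessarily injective) homomorphism density. Consequently, the identity \eqref{pqr} is equivalent to $t(F,G_n)\to p\eff$ for every graph $F$. A direct computation gives $t(F,W_p)=\int_{\oi^\ff}\prod_{ij\in E(F)} p \dd x_1\cdots\dd x_\ff = p\eff$, so the target value $p\eff$ is precisely $t(F,W_p)$.

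With these reformulations in place, both implications follow immediately from the theory of graph limits, which defines $G_n\to W$ by the condition $t(F,G_n)\to t(F,W)$ for every finite simple graph $F$. For the ``only if'' direction, if $\gnq$ is \pqr{} then the cited characterization gives $G_n\to W_p$, hence $t(F,G_n)\to p\eff$ for every $F$, which rewrites as \eqref{pqr}. For the ``if'' direction, \eqref{pqr} holding for every $F$ translates to $t(F,G_n)\to t(F,W_p)$ for every $F$, \ie{} $G_n\to W_p$; applying the cited characterization yields that $\gnq$ is \pqr.

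The only genuine work is the bookkeeping between $N$, $\tinj$, and $t$. The substantive content---that subgraph densities determine graph-limit convergence, and that convergence to the constant graphon $W_p$ is equivalent to \pqr{}ness---is not proved here but invoked as part of the graph-limits framework adopted throughout the paper; accordingly there is no serious obstacle, and the theorem is in effect a rephrasing of the $G_n\to W_p$ characterization in combinatorial language.
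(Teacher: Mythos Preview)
The paper does not give its own proof of this theorem: it is stated with a \qed{} and attributed to \citet{ChungGW:quasi}, so there is nothing in the paper to compare your argument against directly. Your approach via the graph-limit characterization $G_n\to W_p$ is correct and is exactly the kind of argument the paper's framework invites.

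Two small points. First, you have a typo: the falling factorial satisfies $n(n-1)\cdots(n-\ff+1)=n^{\ff}(1+o(1))$, not $n\eff(1+o(1))$; the exponent is $\ff$, not $e(F)$. Second, be aware that the equivalence ``$\gnq$ is \pqr{} $\iff$ $G_n\to W_p$'' that you invoke is itself usually established (in \cite{LSz,BCLSV1}) by going through subgraph densities, so from a foundational standpoint your argument is close to circular: you are essentially restating the definition of convergence to $W_p$ in the language of $N(F,G_n)$. Within this paper that is harmless, since the graph-limit theory is taken as a black box, but it is worth noting that the original proof in \cite{ChungGW:quasi} is a direct combinatorial argument (via $C_4$-counts and codegrees) that predates graph limits and does not rely on any such equivalence.
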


It is not necessary to require \eqref{pqr}
for all graphs $F$; in particular, it suffices to use the graphs $K_2$ and
$C_4$ \cite{ChungGW:quasi}. However, it is not enough to require \eqref{pqr}
for just one graph $F$. As a substitute, \citet{SS:nni} considered
the hereditary version of \eqref{pqr}, \ie{} the condition
$N(F,G;U,\dots,U)$ for subsets $U$.

We note first that
for quasi-random graphs,
it is shown in \cite{SS:nni} and \cite{Shapira} that 
the restricted subgraph count
$N(F,G;U_1,\dots,U_\ff)$ is 
asymptotically the same as it is  for random graphs, for any subsets
$U_1,\dots,U_\ff$.
(For a proof 
using graph limits, see \citet[Lemma 4.2]{SJ234}.)

\begin{lemma}[\cite{SS:nni} and \cite{Shapira}]
  \label{L0}
Suppose that $(G_n)$ is a \pqr{} sequence of graphs, 
where $0\le p\le 1$, and
let $F$ be any fixed graph with $e(F)>0$.
Then,
for all subsets $\uuf$ of\/ $V(G_n)$,
\begin{equation}\label{l0}
  N(F,G_n;\uuf)=p^{e(F)}\prodif|U_i|+o\bigpar{|G_n|^\ff}.
\end{equation}
and
\begin{equation}\label{l0t}
  \tN(F,G_n;\uuf)=p^{e(F)}\prodif|U_i|+o\bigpar{|G_n|^\ff}.
\end{equation}
\qed
\end{lemma}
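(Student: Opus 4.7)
The plan is to leverage the graph limit characterization of $p$-\qr{} sequences: namely, that $G_n \to W_p$, where $W_p$ is the constant graphon with value $p$. Associate to each $G_n$ its standard step-function graphon $W_{G_n}$ on $[0,1]^2$, obtained by dividing $[0,1]$ into $|G_n|$ equal subintervals indexed by $V(G_n)$. Because $W_p$ is constant, any \mpb{} leaves it invariant, so convergence of $W_{G_n}$ to $W_p$ in the cut metric $\delta_\square$ is equivalent to the pointwise statement $\cn{W_{G_n} - W_p} \to 0$.

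Next I would express the restricted count as a graphon integral. For $U \subseteq V(G_n)$, let $A_U \subseteq [0,1]$ be the union of the subintervals corresponding to the vertices of $U$, so $|A_U| = |U|/|G_n|$. Counting all (not necessarily injective) tuples gives
\begin{equation*}
N(F, G_n; \uuf) = |G_n|^\ff \int_{A_{U_1}\times\cdots\times A_{U_\ff}} \prod_{ij \in E(F)} W_{G_n}(x_i,x_j) \, dx_1 \cdots dx_\ff + O\bigl(|G_n|^{\ff-1}\bigr),
\end{equation*}
the error term absorbing the contribution of non-injective tuples, of which there are $O(|G_n|^{\ff-1})$, each contributing at most $1$.

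The key step is then the standard counting lemma in cut-norm form: for any fixed graph $F$ and any graphons $W_n, W : [0,1]^2 \to [0,1]$, there is a universal constant $C$ such that
\begin{equation*}
\sup_{A_1, \ldots, A_\ff} \left| \int_{A_1 \times \cdots \times A_\ff} \prod_{ij \in E(F)} W_n(x_i,x_j) \, dx - \int_{A_1 \times \cdots \times A_\ff} \prod_{ij \in E(F)} W(x_i,x_j) \, dx \right| \le C\, e(F)\, \cn{W_n - W},
\end{equation*}
with the supremum taken over measurable subsets of $[0,1]$. The proof is a telescoping argument: decompose $\prod W_n - \prod W$ into $e(F)$ terms, each isolating one factor $(W_n - W)(x_i, x_j)$; for each such term, freeze all variables other than $x_i$ and $x_j$, so that the inner integral takes the form $\int (W_n - W)(x_i, x_j) f(x_i) g(x_j) \, dx_i \, dx_j$ with $f, g : [0,1] \to [0,1]$ (the indicator factors $\mathbf{1}_{A_i}, \mathbf{1}_{A_j}$ multiplied by the edge-factor products incident to $x_i$ and $x_j$, respectively); this is controlled by $C \cn{W_n - W}$ via the equivalent formulation of the cut norm for $[0,1]$-valued weight functions, and integrating out the remaining frozen variables contributes a factor at most $1$.

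Applying this with $W = W_p$, the second integral evaluates to $p^{e(F)} \prodif |A_{U_i}| = p^{e(F)} |G_n|^{-\ff} \prodif |U_i|$, whence \eqref{l0} follows. Then \eqref{l0t} is immediate from \eqref{l0} by averaging over the $\ff!$ permutations in \eqref{tN}, since the limit $p^{e(F)} \prodif |U_i|$ is symmetric in $U_1, \ldots, U_\ff$. I expect the only point requiring care is uniformity in the sets $A_1, \ldots, A_\ff$; but this is built into the counting lemma above since $C$ and $e(F)$ are independent of the $A_i$, so no genuine obstacle arises.
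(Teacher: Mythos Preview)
Your argument is correct and is precisely the graph-limit approach the paper points to: the paper gives no proof of its own here, instead citing \cite{SS:nni}, \cite{Shapira}, and \cite[Lemma~4.2]{SJ234}, the last being a graph-limit proof. Your reduction from $G_n\to W_p$ to $\cn{W_{G_n}-p}\to0$ via the invariance of the constant graphon, the expression of $N(F,G_n;\uuf)$ as $|G_n|^{\ff}\int_{A_{U_1}\times\cdots\times A_{U_\ff}}\psifw{}_n$ up to $O(|G_n|^{\ff-1})$, and the uniform (in the $A_i$) telescoping counting-lemma bound are exactly the ingredients of that argument.
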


Note that \eqref{l0t} is an immediate consequence of \eqref{l0} by the
definition \eqref{tN}. 

Conversely, \citet{SS:nni} showed that \eqref{l0} implies that $\gnq$ is
\pqr. Actually, they considered only the symmetric case $U_1=\dots=U_\ff$
and proved the following stronger result. (In this case, \eqref{l0t} is
obviously equivalent to \eqref{l0}, see \refR{R=}.)

\begin{theorem}[\citet{SS:nni}]
  \label{TSS}
Suppose that $(G_n)$ is a sequence of graphs with $|G_n|\to\infty$.
Let $F$ be any fixed graph with $e(F)>0$ and let $0<p\le1$.
Then $(G_n)$ is \pqr{} if and only if,
for all subsets $U$ of\/ $V(G_n)$,
\eqref{l0} holds with $U_1=\dots=U_\ff=U$.
\nopf
\end{theorem}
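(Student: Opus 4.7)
The ``only if'' direction is \refL{L0} applied with $\uuf=(U,\dots,U)$ (cf.\ \refR{R=}). For the ``if'' direction, I plan to follow the graph-limit strategy of \cite{SJ234}. Using the characterization of \pqr{} sequences as those converging to the constant graphon $W_p$, together with the compactness of graphon space, it suffices to establish the analytic claim: if a graphon $W$ satisfies
\begin{equation*}
\int_{A^m}\prod_{ij\in E(F)}W(x_i,x_j)\dd x = p^{e(F)}|A|^m
\end{equation*}
for every measurable $A\subseteq\oi$, where $m:=|F|$, then $W\equiv p$ a.e. The hypothesis passes to any subsequential graphon limit by the counting lemma, since the error $o(|G_n|^m)$ in the hypothesis is uniform in $U$.

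To prove the analytic claim I would first \emph{polarize}. Introduce the symmetric $m$-linear form
\begin{equation*}
\tilde\Phi(g_1,\dots,g_m):=\frac1{m!}\sum_{\gs\in\fsf}\int_{\oi^m}\prod_{i=1}^m g_{\gs(i)}(x_i)\prod_{ij\in E(F)}W(x_i,x_j)\dd x,
\end{equation*}
and let $\Psi(g_1,\dots,g_m):=p^{e(F)}\prod_i\int g_i$; the hypothesis reads $\tilde\Phi(\etta_A,\dots,\etta_A)=\Psi(\etta_A,\dots,\etta_A)$ for every Borel $A$. Applying the standard polarization identity for symmetric $m$-linear forms to indicators of disjoint sets $A_1,\dots,A_m$ (so that $\sum_{i\in S}\etta_{A_i}=\etta_{\bigcup_{i\in S}A_i}$) promotes this to $\tN(F,W;\aam)=p^{e(F)}\prod_i|A_i|$ for all pairwise disjoint $\aam$, and by multilinearity/inclusion--exclusion for all measurable $\aam$. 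Specializing to $A_1=A_2=A$ and $A_3=\dots=A_m=\oi$, and grouping the permutations $\gs$ in $\tN$ according to the unordered pair $\set{\gs^{-1}(1),\gs^{-1}(2)}$ of positions where $A$ lands, the equation reduces to $\int_{A\times A}D(x,y)\dd x\dd y=p^{e(F)}|A|^2$, where $D(x,y)$ is a suitable average over unordered pairs $\set{j_0,j_1}\subseteq[m]$ of ``doubly-rooted $F$-densities''. Testing this on every $A\subseteq\oi$ (the $F=K_2$ argument applied to the symmetric kernel $D$) forces $D(x,y)\equiv p^{e(F)}$ a.e.

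The main obstacle is then to deduce $W\equiv p$ from this pointwise identity. Each edge $\set{j_0,j_1}\in E(F)$ contributes a term $W(x,y)\cdot R_{\set{j_0,j_1}}(x,y)$ to $D(x,y)$, while non-edge pairs contribute terms not involving $W(x,y)$ directly; the identity is thus linear in $W(x,y)$, but the remaining quantities are themselves integrals of $W$, making the equation circular if treated in isolation. To break the circularity I would iterate the polarization with higher multiplicities, setting $A_1=\dots=A_k=A$ and $A_{k+1}=\dots=A_m=\oi$ for each $k=2,\dots,m$, to obtain a hierarchy of pointwise identities on $k$-variable kernels $D_k$; combined, these should force $W\equiv p$. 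This algebraic step is the heart of the theorem. Indeed, the analogous pointwise equation obtained by Lebesgue-differentiating the ``all distinct'' case,
\begin{equation*}
\sum_{\gs\in\fsf}\prod_{ij\in E(F)}W(y_{\gs(i)},y_{\gs(j)})=m!\,p^{e(F)}\quad\text{a.e.\ }(\yym),
\end{equation*}
does \emph{not} in general force $W\equiv p$ in isolation (for instance, for $F=P_3$ it has non-constant solutions), so the full hierarchy of identities---equivalently, the full strength of the hypothesis over all measurable $A$---is essential.
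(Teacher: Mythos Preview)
The paper does not prove \refT{TSS}; it is stated with \verb|\nopf| and attributed to \cite{SS:nni}. That said, the paper's own machinery does yield a graph-limit proof, and your proposal is heading in that direction but goes astray at the crucial step.

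Your reduction to graphons and the polarization argument are correct: from $\int_{A^m}\psifw=p^{e(F)}\gl(A)^m$ for all $A$, polarizing over disjoint $\aam$ (using $\sum_i\etta_{A_i}=\etta_{\bigcup A_i}$) gives $\int_{\aamx}\tpsifw=p^{e(F)}\prod_i\gl(A_i)$ for all disjoint $\aam$, and hence by Lebesgue differentiation $\tpsifw(\xxm)=p^{e(F)}$ a.e. You then claim this pointwise identity does \emph{not} force $W\equiv p$, citing $P_3$ as a counterexample, and launch into a hierarchy of further identities. This claim is wrong, and it is precisely here that the paper's tools apply.

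Indeed, the proof of \refT{TA} (via \refT{TD0}) shows exactly that for any $F$ with $e(F)>0$ and any $p\in(0,1]$, the identity $\tpsifw=p^{e(F)}$ a.e.\ forces $W=p$ a.e. For $P_3$ specifically, a 2-type graphon with values $u,v,s$ gives (for $|A|=0,3$) $u^2=v^2=p^2$, hence $u=v=p$, and then (for $|A|=1$) $\tfrac13(s^2+2sp)=p^2$, i.e.\ $(s-p)(s+3p)=0$, so $s=p$; thus no non-constant 2-type solution exists, and \refT{TD0} rules out all others. Your ``hierarchy'' $D_k$ is therefore unnecessary, and since you do not carry it to completion, the proposal as written has a genuine gap. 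The fix is simply to stop after polarization, note that you have $\tpsifw=p^{e(F)}$ a.e., and invoke the argument of \refS{Salg} (\refT{TD0} plus the monotonicity in $s$ used in the proof of \refT{TA}).
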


\begin{remark}
The case $F=K_2$, when $N(K_2,G_n;U)$ is twice the number of edges
with both endpoints in $U$, is one of the original \qr{} properties in
\citet{ChungGW:quasi}.  
\end{remark}

\begin{remark}\label{RK3bi}
\refT{TSS} obviously fails when $e(F)=0$, since then
\eqref{l0} holds trivially for any $G_n$. It fails also if
  $p=0$;
for example, if $F=K_3$ and $G_n$ is the complete
  bipartite graph $K_{n,n}$.
\end{remark}

In other words, \refT{TSS} says that, if $e(F)>0$ and $0<p\le1$,
then \eqref{l0} and \eqref{l0t} (for arbitrary $\uuf$) are
both {\pqr{} properties},
and this holds also if we restrict $\uuf$ to $U_1=\dots=U_\ff$.

Several authors have considered other restrictions on $\uuf$ and shown that 
\eqref{l0} or \eqref{l0t} still is a \qrp.

\citet{Shapira} and \citet{Yuster} continued to consider $U_1=\dots=U_\ff$,
and assumed further that $|U_1|=\floor{\ga|G_n|}$ for some fixed $\ga$ with
$0<\ga<1$; they showed (\cite{Shapira} for $\ga=1/(\ff+1)$ and \cite{Yuster}
in general) that \eqref{l0} for such $\uuf$ is a \qrp.
(The case $F=K_2$ and $\ga=1/2$ is in 
\citet{ChungGW:quasi}.)  
Note that for such $\uuf$, \eqref{l0t} is equivalent to \eqref{l0} by \refR{R=}.

The case when $\uuf$ are disjoint and furthermore have the same size
is considered by \citet{Shapira} and \citet{ShapiraYuster};
they show that \eqref{l0t} with this restriction also is a \qrp.
(As a consequence, \eqref{l0} with this restriction is a \qrp.)
Moreover, by combining \citet[Lemma 2.2]{Shapira} and the result of
\citet{Yuster} just mentioned, 
it follows that it suffices to consider disjoint $\uuf$ with the same size 
$\floor{\ga|G_n|}$, for any fixed $\ga<1/|F|$.

We introduce some more notation. 

\begin{definitions}\label{DP}
Let $F$ be a graph, 
$m:=\ff$ and  $(\gamm)$  a vector of positive 
numbers with $\sumim\ga_i\le1$; let further $p\in\oi$.
We define the following properties
of graph sequences $\gnq$.
(For convenience, we omit $p$ from the notations.)
  \begin{romenumerate}[-10pt]
  \item \label{dp1}
Let $F$ be labelled.
Then $\cPl(F;\gamm)$ is the property
that \eqref{l0} holds for all disjoint
subsets $\uum$ of $V(G_n)$ with $|U_i|=\floor{\ga_i|G_n|}$, $i=1,\dots,m$.

\item 
Let $F$ be unlabelled. Then
$\cPu(F;\gamm)$ is the property
that 
$\cPl(F;\gamm)$ holds for every labelling of $F$.

\item 
Let $F$ be unlabelled. Then
$\ctP(F;\gamm)$ is the property that \eqref{l0t} holds for all
$\uum$ as in \ref{dp1}. 
  \end{romenumerate}
\end{definitions}

Of course, we can use $\cPu$ and $\ctP$ also for a labelled $F$ by ignoring
the labelling.

\begin{remark}\label{RPPP}
If $F=K_m$, then all labellings of $F$ are equivalent, and the three
properties $\cPl(F;\gamm)$, $\cPu(F;\gamm)$ and $\ctP(F;\gamm)$ are
equivalent.
In general, 
$\cPu(F;\gamm)\implies\ctP(F;\gamm)$ by the definition of $\tN$ as an
average of $N$ over all labellings of $F$, but we do not know whether the
converse implication always holds. 

Furthermore, for a fixed labelling of $F$, $\cPu(F;\gamm)$ is equivalent to 
the conjunction of $\cPl(F;\ga_{\gs(1)},\dots,\ga_{\gs(m)})$ for all
permutations 
$(\ga_{\gs(1)},\dots,\ga_{\gs(m)})$ of $(\gamm)$. In particular, if
$\ga_1=\dots=\ga_m$, then $\cPu(F;\gamm)$ equals $\cP(F;\gamm)$, for any
labelling. 

In general, trivially $\cPu(F;\gamm)\implies\cP(F;\gamm)$
for a labelled graph $F$,
but we do not
know whether the converse holds. Nor do we know any general implications
between 
$\cPl(F;\gamm)$ and $\ctP(F;\gamm)$.

See further \refR{RX}.
\end{remark}

Using this notation,
it thus follows 
from \citet{Shapira} and \citet{Yuster} that, 
for any graph $F$ with $e(F)>0$ and $0<p\le1$,
$\ctP(F;\gay)$
is a \qrp{} for every $\ga<1/\ff$.
This can also be
proved by the methods of \citet{SJ234}, where the somewhat weaker  statement
that 
$\cP(F;\gay)$ is a \qrp{} 
for every $\ga<1/\ff$
is shown \cite[Theorem 3.6]{SJ234}.
We show here a more general statement in \refT{TA} below.

\begin{example}\label{ERK2}
  For $F=K_2$, $\ctP(K_2,\ga_1,\ga_2)=\cP(K_2,\ga_1,\ga_2)$ says
that (asymptotically) the number of edges $e(U_1,U_2)$ is 
as expected in $G(n,p)$ for any two
disjoint sets $U_1,U_2$ with $U_i=\floor{\ga_i|G_n|}$.
\citet{ChungG} showed that the cut property $\cP(K_2;\ga,1-\ga)$ is a \qrp{}
for every fixed $\ga\in(0,1)$ except $\ga=1/2$, when it is not;
see further \citet[Section~9]{SJ234}.
\citet{SS:nni} showed that $\cP(K_2,1/3,1/3)$ is a \qrp.
\end{example}

\citet[Proposition 14]{ShapiraYuster:hyper} showed (as a consequence of
related results for cuts in hypergraphs) that $\cP(K_m,\ga_1,\dots,\ga_m)$
is a \qrp, for every  
$m\ge2$ and $(\ga_1,\dots,\ga_m)\neq(1/m,\dots,1/m)$ with $\sumim \ga_i=1$.
This can easily be extended to subgraph counts for arbitrary graphs $F$ with
$e(F)>0$; we give a proof using our methods in \refS{Spf2}.

\begin{theorem}\label{TA}
  Let $F$ be a graph with $e(F)>0$, 
and let $0<p\le1$.
Further, let  $(\gamm)$ be a vector of positive numbers of length $m=\ff$ with
$\sumim\ga_i\le1$. 
\begin{romenumerate}[-10pt]
\item \label{ta1}
If\/ $(\gamm)\neq(1/m,\dots,1/m)$,
then 
$\ctP(F;\gamm)$ and the stronger $\cPu(F;\gamm)$ are \qrpp. 
\item \label{ta2}
If\/ $\sumim \ga_i<1$,
then 
$\cPl(F;\gamm)$ is a \qrp. 
\end{romenumerate}
\end{theorem}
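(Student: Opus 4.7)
By \refL{L0}, each of the three properties $\cPl(F;\gamm)$, $\cPu(F;\gamm)$ and $\ctP(F;\gamm)$ is satisfied by every \pqr{} sequence, so only the reverse direction is at issue. The plan is the graph-limit method of \cite{SJ234}: the space of graphons is compact, so every subsequence of $\gnq$ has a further subsequence converging to some graphon $W$, and it suffices to prove that any such $W$ equals the constant graphon $W_p = p$ almost everywhere, since this forces $G_n \to W_p$, which characterises \pqr{} sequences. Because $\cPu$ implies $\ctP$, part \ref{ta1} reduces to showing that $\ctP(F;\gamm)$ alone forces $W \equiv p$.

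In the limit, $\cPl(F;\gamm)$ becomes the integral identity
\[
\int_{A_1 \times \cdots \times A_m} \prod_{ij \in E(F)} W(x_i, x_j) \, d\mathbf{x} = p\eef \prod_{i=1}^m \alpha_i
\]
for every tuple of pairwise disjoint measurable $A_i \subseteq [0,1]$ with $|A_i| = \alpha_i$, while $\ctP(F;\gamm)$ is the corresponding identity averaged over the $m!$ permutations of the slots. Writing $W = p + U$ and expanding the product, the constant term reproduces the right-hand side, so each identity is equivalent to vanishing of
\[
\sum_{\varnothing \ne E \subseteq E(F)} p^{e(F)-|E|} \int_{A_1 \times \cdots \times A_m} \prod_{ij \in E} U(x_i, x_j)\, d\mathbf{x}
\]
(or its symmetrisation). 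The target is to deduce $U \equiv 0$ a.e.

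For part \ref{ta2}, I would exploit the slack $\eta := 1 - \sum_i \alpha_i > 0$: given any admissible $(A_1,\dots,A_m)$ and small patches $B_0 \subseteq A_i$ and $B \subseteq [0,1]\setminus\bigcup_j A_j$ of equal measure, the modified tuple obtained by replacing $A_i$ with $(A_i\setminus B_0)\cup B$ is still admissible; subtracting the two integral identities kills every contribution independent of the $i$th coordinate on $B_0 \cup B$, and shrinking the patches together with Lebesgue density extracts pointwise information about integrals of $U$ against fixed templates. Iterating across all slots and inducting suitably on $|E|$ then forces $U \equiv 0$. For part \ref{ta1}, no such slack is available when $\sum \alpha_i = 1$, but the assumption $(\gamm) \ne (1/m,\dots,1/m)$ ensures $\alpha_i \ne \alpha_j$ for some $i \ne j$; the $m!$ terms in the symmetrised identity for $\ctP$ then split into orbits strictly smaller than $\fS_m$ under the stabiliser of the multiset $\{\alpha_k\}$, and varying the partition within the fixed size profile, combined with the resulting genuine asymmetry in the identities, should yield enough independent linear constraints on $U$ to conclude $U \equiv 0$.

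The main obstacle will be the algebraic-analytic step of disentangling the sum over $E \subseteq E(F)$: each term $\int \prod_{ij \in E} U$ is a distinct multilinear functional of $U$ indexed by a subgraph of $F$, and the variational arguments must produce enough independent linear relations to annihilate all of them simultaneously. This is precisely the point at which the excluded borderline case $(\gamm) = (1/m,\dots,1/m)$ with $\sum \alpha_i = 1$ obstructs the method, consistent with the counterexamples recalled in \refE{ERK2}: the $A_i$ are then forced to partition $[0,1]$ into equal parts, every permutation of slots preserves the set of admissible configurations, and the available relations collapse. The split into regimes \ref{ta1} and \ref{ta2} thus mirrors the two genuinely different sources of freedom the method can exploit — size asymmetry in one case and spatial slack in the other.
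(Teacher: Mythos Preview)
Your graph-limit reduction is correct and matches the paper's first move. But the plan after that has a real gap, and it is not the ``disentangling the sum over $E$'' issue you flag.

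The expansion $W=p+U$ is a red herring. Your variational/patch argument, if carried out, acts on the \emph{whole} integrand $\psifw-p^{e(F)}$ (or its symmetrisation), not on the individual subgraph terms in your expansion; Lebesgue density will at best give you
\[
\prod_{ij\in E(F)} W(x_i,x_j)=p^{e(F)}\qquad\text{for a.e. }(x_1,\dots,x_m),
\]
which is exactly what the paper obtains via \refL{L3+} and \refL{LW}/\refL{LW3}. There is no mechanism in your sketch that isolates the $|E|=1$ terms from the higher ones, so the proposed induction on $|E|$ never gets started.

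The step you are actually missing is the passage from ``$\psifw$ (or $\tpsifw$) is a.e.\ constant'' to ``$W$ is a.e.\ constant''. This is nontrivial: the identity holds only almost everywhere on $\oi^m$, so you cannot simply set several coordinates equal and read off $W(x,x)=p$. The paper closes this gap by a removal-lemma/2-type reduction (\refT{TD0}, via \refL{LD0} and Petrov's theorem): one may assume $W$ is a 2-type graphon with values $u,v,s$, and then $A=[m]$, $A=\emptyset$, $A=\{1\}$ force $u=v=s=p$ by a short monotonicity argument. Nothing in your outline supplies this step, and without it the argument does not conclude.

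For part~\ref{ta1} the same comment applies; additionally, your ``orbits under the stabiliser of $\{\alpha_k\}$'' idea is suggestive but does not by itself produce the needed constraint. The paper instead uses that $\tpsifw$ is \emph{symmetric}, so \refL{L3+}(ii) applies directly once some $\alpha_i\neq\alpha_j$; the asymmetry of the $\alpha_i$ is used only to enable that lemma, not to split the identity into independent pieces.
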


The exceptional case $\ga_1=\dots=\ga_m=1/m$
is more complicated; \citet{ShapiraYuster:hyper} showed
that the related hypergraph cut property used by them to prove \refT{TA}
fails in this case; 
nevertheless, \citet{HuangLee} showed that also
$\cP(K_m,1/m,\dots,1/m)$ is a \qrp{} for any $m\ge3$. (For $m=2$ it is  not,
see \refE{ERK2}.)

We give a new proof of their theorem in \refS{Spf1} and extend the result to 
counts of several other subgraphs. With our methods using graph
limits, the crucial fact is that while the central analytic
\refL{L3} does not
generalize to the case $(\gamm)=(\mmm)$, there is a weaker version \refL{Lmain}
that holds in this case, and this is sufficient to draw the conclusion
with some extra algebraic work. We have so far not succeded to extend
the final, algebraic, part to all graphs $F$, but
we can prove  the following, see \refS{Spf1}.
(\refS{Spf1} contains also some further examples of small
graphs $F$ for which the conclusion 
holds.)

\begin{theorem}\label{T=}
Let $F$ be a graph with $e(F)>1$ and  $m=|F|$.
Let also $0<p\le1$.
If $F$ is either a regular graph
or a star,
or disconnected,
then $\cP(F;\mmm)$ and the weaker $\ctP(F;\mmm)$ are \qr{} properties.
\end{theorem}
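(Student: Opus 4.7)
The strategy follows the graph limit approach of \cite{SJ234}. By the graphon characterization of $p$-quasi-randomness, it suffices to prove that any graphon $W$ with $\int W = p$ satisfying the integral version of $\ctP(F;\mmm)$ is a.e.\ equal to the constant $p$. By \refR{RPPP} the stronger property $\cP(F;\mmm)$ reduces to the same graphon-level statement in the equal-parameter case, so both assertions of the theorem follow from one graphon conclusion.

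The analytic input is \refL{Lmain}, the weaker equal-parameter substitute for \refL{L3}. It converts the equipartition condition into the pointwise identity
\begin{equation*}
  \tpsifw(x_1,\dots,x_m) := \frac{1}{m!}\sum_{\sigma\in\mathfrak{S}_m}\prod_{\{i,j\}\in E(F)} W\bigl(x_{\sigma(i)},x_{\sigma(j)}\bigr) = p^{e(F)},
\end{equation*}
valid for a.e.\ $(x_1,\dots,x_m)\in[0,1]^m$. Grouping the permutations by the vertex $v := \sigma^{-1}(1)$ yields the ``diagonal'' form
\begin{equation*}
  \tpsifw(x,y,\dots,y) = \frac{1}{m}\sum_{v\in V(F)} W(x,y)^{\deg_F(v)}\,W(y,y)^{e(F)-\deg_F(v)}.
\end{equation*}
The passage from the full a.e.\ identity on $[0,1]^m$ to such a lower-dimensional slice is to be justified by a standard Lebesgue-density argument for the bounded measurable function $W$, or alternatively by testing the identity against tensor-product indicators of small sets; this is a technical point but presents no essential difficulty.

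From here the argument splits according to the structure of $F$. If $F$ is $d$-regular, the sum collapses to the single term $W(x,y)^d W(y,y)^{e(F)-d} = p^{e(F)}$; since $p>0$, $W(x,y)^d$ depends only on $y$, and by the symmetry $W(x,y)=W(y,x)$ it depends on neither variable, forcing $W\equiv p$. If $F=K_{1,m-1}$ is a star (and $m\ge 3$ since $e(F)>1$), the identity reads $a^{m-1}+(m-1)b^{m-2}a = m p^{m-1}$ with $a=W(x,y)$ and $b=W(y,y)$; the left side is strictly increasing in $a\ge 0$ for each fixed $b\ge 0$, so $a$ is a.e.\ determined by $b$ alone, and the symmetry argument again closes the case. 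If $F=F_1\sqcup F_2$ is disconnected, $\tpsifw$ decomposes as an average over subsets $S\subseteq[m]$ of size $|F_1|$ of products $\widetilde\Psi_{F_1,W}(x_S)\,\widetilde\Psi_{F_2,W}(x_{S^c})$; integrating the pointwise identity over the variables of one block brings in the degree function $d_W(x):=\int W(x,y)\,dy$, and one first shows $d_W\equiv p$ (for instance, by applying Jensen's inequality to a suitably integrated version) and then $W\equiv p$. In the prototypical case $F=2K_2$ this is especially transparent: integrating over $x_3, x_4$ gives $p\,W(x_1,x_2)+2 d_W(x_1) d_W(x_2) = 3 p^2$ a.e., and one further integration over $x_2$ yields $d_W\equiv p$, after which $W\equiv p$ is immediate.

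The chief obstacle is the algebraic step. When $F$ is neither regular, nor a star, nor disconnected, the symmetric polynomial identity supplied by \refL{Lmain} no longer collapses cleanly under the substitution $x_2=\dots=x_m=y$, and we know of no general mechanism to extract the constancy of $W$. The three families in \refT{T=} are precisely those for which a direct manipulation of $\tpsifw$ succeeds; extending the conclusion to arbitrary $F$ with $(\gamm)=(\mmm)$ is exactly the open problem alluded to in the introduction.
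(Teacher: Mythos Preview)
Your proof has a fundamental error at the very first step: you have misread \refL{Lmain}. That lemma does \emph{not} conclude that $f=0$ a.e.; its conclusion is only that $f(\xxm)=g(x_1)+\dots+g(x_m)$ a.e.\ for some $g$ with $\intoi g=0$. Applied to $f=\tpsifw-p^{e(F)}$, the correct output is
\[
  \tpsifw(x_1,\dots,x_m)=\sum_{i=1}^m h(x_i)\qquad\text{a.e.},
\]
with $\intoi h=p^{e(F)}/m$, as stated in \refL{LW}\ref{lw2}. The stronger statement $\tpsifw=p^{e(F)}$ a.e.\ that you use is exactly what \refL{L3+} gives when $(\gamm)\neq(\mmm)$, and is precisely what \emph{fails} in the equal-parameter case; see \refL{Lcounter} and the discussion around it. All of your subsequent algebra (the regular, star, and disconnected cases) is carried out under this false premise, so none of it stands as written. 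For instance, in the regular case you would actually have
\[
  W(x,y)^{d}\,W(y,y)^{e(F)-d}=h(x)+(m-1)h(y),
\]
and your symmetry argument no longer applies, since the right-hand side is not constant.

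A second, independent gap is the ``diagonal'' substitution $x_2=\dots=x_m=y$. This restricts an a.e.\ identity on $\oi^m$ to a set of measure zero, and your appeal to a ``standard Lebesgue-density argument'' does not work here: there is no reason the identity should survive on the diagonal for an arbitrary bounded measurable $W$. The paper addresses exactly this issue via \refL{LD} (a consequence of Petrov's removal lemma), which upgrades the a.e.\ identity to an everywhere identity for a suitable version $\bW$ of $W$; this is what makes the reduction to 2-type graphons in \refT{TD} possible. The paper then proceeds through \refL{Lalg} to an algebraic criterion in three real parameters $u,v,s$, and verifies it case by case in Examples \ref{Ereg}--\ref{Edis}. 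Your argument would need to be rebuilt along these lines.
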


One indication that this theorem is more complicated than \refT{TA} is that 
the conclusion is false for $F=K_2$ by \refE{ERK2}, 
and slightly more generally when $e(F)\le1$.
We conjecture that this is the only counterexample.

\begin{conjecture}\label{Conj1}
  \refT{T=} holds for any graph $F$ with $e(F)>1$.
\end{conjecture}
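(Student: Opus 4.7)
The plan is to run the graph-limit machinery of \citet{SJ234} that underlies the proof of \refT{T=}. A sequence $\gnq$ is \pqr{} iff $G_n\to W_p$, so $\ctP(F;\mmm)$ is a \qrp{} precisely when every graphon $W$ satisfying the limiting identity
\begin{equation*}
\int_{A_1\times\cdots\times A_m}\tpsifw(\xxm)\,dx_1\cdots dx_m \;=\; p\eff/m^m
\end{equation*}
for every partition of $\oi$ into measurable sets $A_1,\dots,A_m$ of measure $1/m$ must be (almost everywhere) the constant $p$.

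The first step is to invoke the weaker analytic \refL{Lmain}, which (unlike \refL{L3}) remains valid in the equipartition regime, in order to reduce the condition to a symmetric algebraic identity in $W$. Writing $W=p+f$ with $\int_\oi f=0$ and expanding in powers of $f$, this identity unpacks into a graded family of integral equations indexed by the (unlabelled) subgraphs of $F$. For the three classes already covered by \refT{T=} one exploits specific structural features --- the abundance of automorphisms of a regular $F$, the central vertex of a star (which lets one integrate out the leaves one at a time) and the factorisation of $\tpsifw$ across connected components when $F$ is disconnected --- to decouple the graded equations and conclude successively that every homogeneous piece of $f$ vanishes.

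The main obstacle to extending this to a general $F$ with $e(F)>1$ is precisely the final algebraic step: for a connected, non-regular, non-star $F$ the graded equations couple many terms at once, and the equipartition restriction removes the variational freedom on the sizes $\ga_i$ that drives \refT{TA}. A natural line of attack is an inductive reduction, replacing $F$ by a smaller graph obtained by a judicious edge deletion or contraction, and combining the resulting identity with the non-equipartition results of \refT{TA} applied to that smaller graph. An alternative is a spectral analysis of the linear operator $T_F$ on mean-zero symmetric $L^2$-kernels induced by the degree-$1$ part of the expansion: one would want to show that $\ker T_F=\set{0}$ whenever $e(F)>1$. The classes handled in \refT{T=} are exactly those for which $T_F$ has a transparent form (essentially diagonal for regular $F$, low-rank for stars, block-diagonal for disconnected $F$); a genuinely new combinatorial or spectral input seems required to cover a generic $F$.
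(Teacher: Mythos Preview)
The statement you are attempting to prove is \emph{Conjecture~\ref{Conj1}}, not a theorem: the paper explicitly leaves it open and provides no proof. So there is no ``paper's own proof'' to compare against, and your proposal is being measured against an open problem.

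Your write-up is not a proof either, and you acknowledge this yourself: after outlining the graph-limit reduction you state that ``the main obstacle \dots is precisely the final algebraic step'' and that ``a genuinely new combinatorial or spectral input seems required to cover a generic $F$.'' That is an honest assessment, but it means the proposal contains no argument that actually closes the gap. The sketch of an inductive edge-deletion/contraction scheme and of a spectral operator $T_F$ are speculative; no definition of $T_F$ is given, no kernel computation is carried out, and no induction is set up.

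It is also worth noting that your description of the paper's reduction is somewhat off. The paper does not work with a ``graded family of integral equations indexed by subgraphs of $F$'' obtained by expanding $W=p+f$. Instead, via \refL{Lmain} and the two-type reduction of \refT{TD}, it reduces (in \refL{Lalg}) to a concrete algebraic question: whether there exist $u,v,s\ge0$, not all equal, such that the polynomial
\[
\gL_{F;u,v,s}(q)=\sum_{A\subseteq V(F)} u^{e_F(A)}v^{e_F(A\comp)}s^{e_F(A,A\comp)}q^{|A|}(1-q)^{m-|A|}
\]
has degree at most $1$ without vanishing. The proofs for regular graphs, stars, and disconnected graphs (Examples~\ref{Ereg}--\ref{Edis}) are direct manipulations of this polynomial, not the automorphism/leaf-integration/factorisation mechanisms you describe. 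If you want to attack the conjecture along the paper's lines, the right target is to show that $\deg\gL_{F;u,v,s}\ge2$ for all admissible $(u,v,s)$ whenever $e(F)>1$; the paper's \refR{R2} and Examples~\ref{Edd}--\ref{EKmn} give the known constraints and partial evidence.
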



\begin{remark}\label{RX}
When $F\neq K_m$, the relation between the properties $\cPl$ (non-averaged)
and $\ctP$ (averaged) is not completely clear. (For $F=K_m$, these
properties coincide, see \refR{RPPP}.) 

Consider first $\ga_1=\dots=\ga_m=1/m$ as in \refT{T=}. Then
$\cPl=\cPu\implies\ctP$. 
(See  \refR{RPPP} again.) 
For a graph $F$ such that \refT{T=} applies, the theorem implies that the properties
are equivalent, but as said above, we do not know whether that holds in general.
In principle, it should be easier to show that the property
$\cP(F;\mmm)$ is \pqr{} than to show that 
the weaker (averaged) property
$\ctP(F;\mmm)$ is; 
it is even conceivable that there exists a
counterexample to \refConj{Conj1} such that nevertheless $\cP(F;\mmm)$ is \pqr.
However, our method of proof uses \refL{Lmain} below which assumes that the
function $f$ there is symmetric, and  hence 
our proofs use the symmetric $\ctP(F;\mmm)$ and we are not able to use the extra
power of $\cP(F;\allowbreak\mmm)$. 
For example, we cannot answer the following question.
(Cf.\ \refS{Stwo} for $\ctP(F;\allowbreak\mmm)$.)
A \emph{2-type graphon} is a graphon that is constant on the sets 
$S_i\times S_j$, $i,j\in\set{1,2}$, for some partition $\oi=S_1\cup S_2$
into two disjoint sets;  we can without loss of generality assume that
the sets $S_i$ are intervals.
(Equivalently, we
may regard $W$ as a graphon defined on a two-point probability space.)

\begin{problem}\label{P2mmm}  
If $F$ is such that $\cP(F;\mmm)$ is not \pqr, 
is there always a 2-type graphon witnessing
this?
\end{problem}

For other sequences $\gamm$, we note first that if $\sumim\ga_i<1$, then
\refT{TA} shows that both $\cPl$ and $\ctP$ are \qrpp, and thus equivalent.
Similarly, if $\sumim\ga_i=1$ but $(\gamm)\neq(\mmm)$, then $\ctP$ is \qr{}
by \refT{TA}, and
thus $\ctP\implies\cPl$. However, we do not know whether the converse holds:
\begin{problem}
Suppose that $F$ is a labelled graph with $e(F)>0$, that $0<p\le1$ and that
$\sumim\ga_i=1$ but $(\gamm)\neq(\mmm)$. Is then $\cPl(F;\gamm)$ a \qrp?
\end{problem}

If there is any case such that the answer to this problem is negative, we can
ask the same question as in \refP{P2mmm}:

\begin{problem}\label{P2gamm} 
If $F$ and $(\gamm)$ are such that $\cP(F;\gamm)$ is not \pqr, 
is there always a 2-type graphon witnessing
this?
\end{problem}

\begin{example}\label{EP3}
  Let $F=P_3=K_{1,2}$,
for definiteness labelled with edges 12 and 13, and consider
  the property $\cPl(F;\ga_1,\ga_2,\ga_3)$. If $\ga_1+\ga_2+\ga_3<1$, then
  the property is \qr{} by \refT{TA}; thus assume $\ga_1+\ga_2+\ga_3=1$.
In the case $\ga_1=\ga_2=\ga_3=1/3$, the property is \qr{} by \refT{T=}.
We can show this also in the case $\ga_2\neq\ga_3$, using the symmetry of
$P_3$, see \refR{RP3b}. However, we do not know if this extends to
$\ga_2=\ga_3$, for example in the following case:
\end{example}

\begin{problem}\label{PP3}
  Is (with the labelling above) $\cPl(P_3,\frac12,\frac14,\frac14)$ a \qrp?
\end{problem}
\end{remark}

\begin{remark}\label{R3}
We have 
considered 
the subgraph counts
$N(F,G_n;\uum)$ and $\tN(F,G_n;\uum)$
in two cases: either $U_1=\dots=U_m$ (as in \cite{SS:nni})
or $U_1,\dots,U_m$ are disjoint.
It also seems interesting to consider other, intermediate, cases of
restrictions. This is discussed in \refS{Sless}, where we in particular
consider, as a typical example,
the case $U_1=U_2$ and $U_1\cap U_3=\emptyset$. 
\end{remark}

\begin{remark}
We consider in this paper not necessarily induced copies of a fixed graph
$F$.
There are also similar results for counts of induced copies of $F$, but
these are more complicated and less complete,
see \citet{SS:ind},
\citet{ShapiraYuster} and
\citet{SJ234}. 
We hope to return to the induced case, but leave it for now as an open problem:
\end{remark}

\begin{problem}
  Are there analogues of Theorems \ref{TA} and \ref{T=} for the induced case?
\end{problem}

\section{Transfer to graph limits}

We introduce some further notation:

The support of a function $\psi$ is the set
$\supp(\psi):= \set{x:\psi(x)\neq0}$.

$\gl$ denotes Lebesgue measure.

All functions are supposed to be (Lebesgue) measurable.

If $F$ is a labelled graph and $W$ a graphon, we define
\begin{equation}\label{psifw}
  \psifw(\xxf):=\prod_{ij\in E(F)} W(x_i,x_j).
\end{equation}

If $f$ is a function on $\oi^m$ for some $m$, we
let $\tf$ denote its symmetrization defined by
\begin{equation}
  \label{symm}
\tf(\xxm):=\frac1{m!}\sum_{\gs\in\fS_m} f\bigpar{\xxgsm},
\end{equation}
where $\fS_m$ is the symmetric group of all $m!$
permutations of \set{1,\dots,m}. 

The connection between the subgraph count properties and properties of graph
limits is given by the following lemma.

\begin{lemma}
  \label{LA1c}
Suppose that $G_n\to W$ for some graphon $W$. 
Let
$F$ be a fixed graph, let $m:=\ff$  and 
let $\gax\ge0$ and $\gamm\in(0,1)$ be fixed numbers
with $\sumim \ga_i\le1$.
Then the following are equivalent:
\begin{romenumerate}[-10pt]
  \item\label{La1ci}
For all disjoint subsets $\uuf$ of\/ $V(G_n)$
with $|U_i|=\floor{\ga_i|G_n|}$,
\begin{equation}\label{la1ci}
  N(F,G_n;\uuf)=\gax\prodif|U_i|+o\bigpar{|G_n|^\ff}.
\end{equation}
\item\label{La1cii}
For all disjoint subsets $\aaf$ of\/ $\oi$ with $\gl(A_i)=\gc_i$,
\begin{equation}\label{la1cii}
  \intaafx \psifw(\xxf)=\gax\prodif\gl(A_i).
\end{equation}
\end{romenumerate}
The same holds if we replace $N$ in \ref{La1ci}
and $\psifw$ in \ref{La1cii} by the symmetrized versions
$\tN$ and $\tpsifw$.
\end{lemma}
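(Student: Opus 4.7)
The plan is to transfer between vertex subsets of $G_n$ and measurable subsets of $\oi$ via the standard graphon representation, and use that $G_n\to W$ in cut norm (after suitable relabelling) together with a restricted counting lemma.

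First, I would set up the correspondence. Since $G_n\to W$, we may identify $G_n$ with its canonical step-function graphon $W_n=W_{G_n}$ on $\oi$, partitioning $\oi$ into $|G_n|$ equal intervals $I_{nv}$ indexed by $v\in V(G_n)$, and choose the identification so that $\|W_n-W\|\cut\to 0$. For $U_i\subseteq V(G_n)$, set $A_i^{(n)}:=\bigcup_{v\in U_i} I_{nv}\subseteq\oi$, so $\gl(A_i^{(n)})=|U_i|/|G_n|$, and disjoint $U_i$ give disjoint $A_i^{(n)}$. A direct calculation, distinguishing injective homomorphisms from all homomorphisms (an $O(|G_n|^{m-1})$ correction), yields
\begin{equation*}
\frac{N(F,G_n;\uum)}{|G_n|^m}=\int_{A_1^{(n)}\times\dots\times A_m^{(n)}}\psifwn\,\dd x_1\cdots\dd x_m+O(|G_n|^{-1}).
\end{equation*}
Next, the standard counting lemma in cut norm gives
$\bigabs{\int_{B_1\times\dots\times B_m}\psifwn-\int_{B_1\times\dots\times B_m}\psifw}\to 0$
uniformly over measurable $B_i\subseteq\oi$; and since $\psifw$ is bounded, the integral is uniformly continuous in the sets $B_i$ under the pseudometric $\sum_i\gl(B_i\setdiff B_i')$.

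For \ref{La1ci}$\Rightarrow$\ref{La1cii}: given disjoint $\aam\subseteq\oi$ with $\gl(A_i)=\gc_i$, I would approximate each $A_i$ by a set $A_i^{(n)}$ that is a union of intervals $I_{nv}$, pairwise disjoint, with $|A_i^{(n)}|=\floor{\gc_i|G_n|}/|G_n|$ and $\gl(A_i\setdiff A_i^{(n)})\to 0$ (which is possible since $\sum\ga_i\le1$, leaving room to fix the measures exactly). These correspond to disjoint $U_i\subseteq V(G_n)$ with $|U_i|=\floor{\gc_i|G_n|}$, so \ref{La1ci} yields the integral of $\psifwn$ over $A_1^{(n)}\times\dots\times A_m^{(n)}$ is $\gax\prodim\gc_i+o(1)$. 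Cut-norm convergence replaces $\psifwn$ by $\psifw$, and continuity in the sets replaces $A_i^{(n)}$ by $A_i$, giving \eqref{la1cii}. The reverse direction \ref{La1cii}$\Rightarrow$\ref{La1ci} is symmetric: given $U_i$, let $A_i^{(n)}$ be the corresponding step-function sets, adjust by a set of measure $O(1/|G_n|)$ to disjoint $\widehat A_i^{(n)}$ with $\gl(\widehat A_i^{(n)})=\gc_i$ exactly, apply \ref{La1cii}, undo the adjustment using the continuity estimates, and multiply by $|G_n|^m$.

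The symmetrized statement with $\tN$ and $\tpsifw$ follows immediately by averaging the already-proven equivalence over all $m!$ permutations $\gs$ of $V(F)$, since both $\tN$ and $\tpsifw$ are the corresponding averages by definition \eqref{tN} and \eqref{symm}. The main technical subtlety is the compatibility of the vertex-to-interval correspondence with the cut-norm convergence $W_n\to W$: the graphon convergence only determines $W$ up to measure-preserving rearrangement, so one must fix once and for all a labelling of each $G_n$ (equivalently, apply appropriate measure-preserving transformations on the $\oi$ side) so that $\|W_n-W\|\cut\to 0$ with respect to this labelling, and only afterwards identify subsets $U_i\subseteq V(G_n)$ with $A_i^{(n)}\subseteq\oi$; the uniformity of the counting lemma in cut norm over measurable sets is what makes this work, and is essentially the content already used in \cite[Lemma 4.2]{SJ234}.
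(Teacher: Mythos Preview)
Your proposal is correct and follows essentially the same approach as the paper, which simply defers to \cite[Lemma~7.2]{SJ234} for the special case $\ga_1=\dots=\ga_m<1/m$ and states that the general $(\gamm)$ and the symmetrized version are proved identically; you have faithfully reconstructed that standard argument (step-function graphon, counting lemma uniform in the sets, approximation of measurable sets by interval unions). One small remark: the reference you want at the end is \cite[Lemma~7.2]{SJ234} rather than Lemma~4.2, which the paper uses for a different purpose (Lemma~\ref{L0}).
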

\begin{proof}
The case with $N$ and $\psifw$ and with 
$\ga_1=\dots=\ga_m <1/\ff$ is part of \citet[Lemma 7.2]{SJ234}.
The case of general $\gamm$, and the symmetrized version with $\tN$ and
$\tpsifw$  
are proved in exactly the same way.
\end{proof}

With this lemma in mind, we make the following definitions corresponding to
\refD{DP}. 

\begin{definition}
  \label{DW}
Let, as in \refD{DP},
$F$ be a graph,  $m:=\ff$,  $(\gamm)$  a vector of positive
numbers with $\sumim\ga_i\le1$, and $p\in\oi$. 
We define the following properties of graphons $W$.
\begin{romenumerate}[-10pt]
\item\label{dwl} 
$\cplw(F;\gamm)$ is the property that 
\begin{equation}\label{e33}
  \intaamx \psifw(\xxm)=p^{e(F)}\prodim\gl(A_i),
\end{equation}
for all disjoint
subsets $\aam$ of $\oi$ with $\gl(A_i)={\ga_i}$, $i=1,\dots,m$.
\item 
$\cpuw(F;\gamm)$ is the property that $\cplw(F;\gamm)$ holds for every
  labelling of $F$.
\item 
$\ctpw(F;\gamm)$ is the property that 
\begin{equation}\label{e34}
  \intaamx \tpsifw(\xxm)=p^{e(F)}\prodim\gl(A_i)
\end{equation}
for all  $\aam$ as in \ref{dwl}.
\end{romenumerate}
\end{definition}

\begin{definition}
  \label{D3}
A property of graphons $W$ is \emph{\qr} if every graphon $W$ that satisfies
it is \aex{} equal to a constant.
Furthermore, the property is \emph{\pqr} if it is satisfied only by graphons
$W$ that are \aex{} equal to $p$.
\end{definition}

We can now use standard arguments to translate our problem from graph
sequences to graphons. 
Recall that $m:=|F|$.

\begin{lemma}\label{Lequiv}
For any given graph $F$,  $p\in\oi$ and $\gamm\in(0,1)$ with $\sumim\ga_i\le1$, 
the  property $\cPl(F;\gamm)$ (of graph sequences)
is \pqr{} if and only if
the property $\cplw(F;\gamm)$ (of graphons) is.

Similarly, the property
$\cPu(F;\gamm)$ is \pqr{} if and only if  
the property
$\cpuw(F;\gamm)$ is, and
$\ctP(F;\gamm)$ is \pqr{} if and only if  
$\ctpw(F;\gamm)$ is.
\end{lemma}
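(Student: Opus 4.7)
The plan is to use Lemma \ref{LA1c} as a dictionary between the combinatorial conditions of \refD{DP} and the analytic conditions of \refD{DW}, combined with the sequential compactness of the graphon space. I treat only the pair $\bigpar{\cPl(F;\gamm),\cplw(F;\gamm)}$; the equivalence for $\cPu/\cpuw$ follows by quantifying the argument over all labellings of $F$, and the equivalence for $\ctP/\ctpw$ follows by invoking the symmetrised half of Lemma \ref{LA1c} (with $\tN$ and $\tpsifw$ in place of $N$ and $\psifw$).

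Suppose first that $\cplw(F;\gamm)$ is $p$-quasi-random, and let $(G_n)$ be a graph sequence with $\gnabs\to\infty$ satisfying $\cPl(F;\gamm)$. By the graph-limit characterization of $p$-quasi-randomness recalled in \refS{S:intro}, it is enough to show $G_n\to W_p$ in the cut metric. Every subsequence of $(G_n)$ admits, by graphon compactness, a further subsubsequence $(G_{n_k})$ converging to some graphon $W$; applying Lemma \ref{LA1c} with $\gax=p^{e(F)}$ to this subsubsequence shows that $W$ satisfies \eqref{e33}, so by hypothesis $W=p$ \aex{} Thus every subsequential limit of $(G_n)$ equals $W_p$, and therefore $G_n\to W_p$.

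Conversely, suppose $\cPl(F;\gamm)$ is $p$-quasi-random and let $W$ be any graphon satisfying $\cplw(F;\gamm)$. Pick a graph sequence $(G_n)$ with $\gnabs\to\infty$ and $G_n\to W$ --- for instance an almost-sure realisation of the $W$-random graphs $\mathbb{G}(n,W)$, which converge to $W$ almost surely. Lemma \ref{LA1c} (now read in the other direction) guarantees that $(G_n)$ satisfies $\cPl(F;\gamm)$, so the hypothesis forces $(G_n)$ to be $p$-quasi-random, i.e.\ $G_n\to W_p$. Since graphon limits are unique modulo measure preserving rearrangements and $W_p$ is constant, this gives $W=p$ \aex

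I do not expect a serious obstacle: all the substantive content is already in Lemma \ref{LA1c}, and what remains is the standard compactness-plus-unique-limit template by which combinatorial quasi-randomness questions are routinely transferred to the graphon level, exactly as in \cite{SJ234}. The only points deserving brief care are the choice of an approximating sequence in the converse direction (any graph sequence converging to $W$ will do) and the observation that the equivalence in Lemma \ref{LA1c} applies uniformly over the class of admissible $\uum$ and $\aam$, so that the subsequence limit really inherits the desired property.
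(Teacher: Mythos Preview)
Your proof is correct and follows essentially the same approach as the paper's: both directions use \refL{LA1c} as the bridge between \refD{DP} and \refD{DW}, together with graphon compactness and uniqueness of limits, with the subsequence--subsubsequence argument to upgrade subsequential convergence to full convergence. The only cosmetic differences are that you treat the two directions in the opposite order and make explicit a choice of approximating sequence (via $W$-random graphs) where the paper simply invokes the existence of some $(G_n)\to W$.
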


\begin{proof}
Suppose that $\cP(F;\gamm)$ is \pqr, and let $W$ be a graphon satisfying 
$\cplw(F;\gamm)$. Let $\gnq$ be any sequence of graphs converging to $W$.
By assumption, \refL{LA1c}\ref{La1cii} holds with $\gam=p^{e(F)}$, and thus
\refL{LA1c} shows that \eqref{la1ci} holds for all disjoint $\uum$ with
$|U_i|=\floor{\ga_i|G_n|}$. In other words, $\gnq$ satisfies the property
$\cP(F;\gamm)$, and since this property was assumed to be \pqr, the sequence
$\gnq$ is \pqr, and thus $G_n\to W_p$, where $W_p=p$ everywhere. 
Since $G_n\to W$, this implies $W=W_p=p$
a.e.

Conversely, suppose that $\cplw(F;\gamm)$ is \pqr, and let $\gnq$ be a graph
sequence satisfying $\cPl(F;\gamm)$. This means that 
\refL{LA1c}\ref{La1ci} holds with $\gam=p^{e(F)}$.
Consider a subsequence of $\gnq$ that converges to some graphon $W$.
\refL{LA1c} then shows that \eqref{la1cii} holds for all disjoint $\aam$ with
$\gl(A_i)=\ga_i$. In other words, $W$ satisfies the property
$\cplw(F;\gamm)$, and since this property was assumed to be \pqr, 
$W=p$ a.e. Consequently, every convergent subsequence of $\gnq$ converges
to the constant graphon $W_p=p$. Since every subsequence has convergent
subsubsequences, it follows that the full sequence $\gnq$ converges to $W_p$,
\ie, $\gnq$ is \pqr.

The same proof works for $\ctP(F;\gamm)$ and $\ctpw(F;\gamm)$.
\end{proof}

In the rest of the paper we analyze the graphon properties 
$\cplw(F;\gamm)$ and $\ctpw(F;\gamm)$.

\section{The analytic part}\label{Sanal}

\citet{SJ234} proved the following lemma:

\begin{lemma}[{\cite[Lemma 7.3]{SJ234}}]
  \label{L3}
Let\/ $m\ge1$ and $\ga\in(0,1)$.
Suppose that 
$f$ is an integrable function on $\oi^m$
such that $\intaamx f=0$ for all sequences $\aam$ of
measurable subsets of\/ $\oi$ such that
$\gl(A_1)=\dots=\gl(A_m)=\ga$.
Then $f=0$ a.e.

Moreover, if $\ga<m\qw$, it is enough to consider
disjoint $\aam$.
\end{lemma}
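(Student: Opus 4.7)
The plan is to reduce to a one-dimensional base case and iterate, handling the disjoint version with some extra care.

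\emph{One-dimensional base case.} The key ingredient is the following: if $g \in L^1([0,1])$ satisfies $\int_A g = 0$ for every measurable $A$ with $\lambda(A) = \alpha \in (0,1)$, then $g = 0$ a.e. For any two measurable $E, E' \subseteq [0,1]$ of the same small measure $\beta \le \min(\alpha, 1-\alpha)$, after reducing to the case $E \cap E' = \emptyset$, one can find a ``padding'' set $C$ of measure $\alpha - \beta$ disjoint from $E \cup E'$ (possible since $\lambda([0,1] \setminus (E \cup E')) \ge 1 - 2\beta \ge \alpha - \beta$); applying the hypothesis to $E \cup C$ and $E' \cup C$ and subtracting yields $\int_E g = \int_{E'} g$. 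Hence $g$ is a.e.\ constant, and the constant must vanish since $c\alpha = \int_A g = 0$.

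\emph{First part.} Fix $A_2, \dots, A_m$ of measure $\alpha$, set $g_1(x_1) := \int_{A_2 \times \dots \times A_m} f(x_1, \dots, x_m)\,dx_2 \cdots dx_m$, and apply the base case: $g_1 = 0$ a.e., so $\int_{B_1 \times A_2 \times \dots \times A_m} f = 0$ for every measurable $B_1$. Iterating in the next coordinate (now with $B_1$ arbitrary and $A_3, \dots, A_m$ still of measure $\alpha$), and so on, one reaches $\int_{B_1 \times \dots \times B_m} f = 0$ for all measurable $B_1, \dots, B_m$, from which $f = 0$ a.e.\ follows by a standard monotone-class argument.

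\emph{Disjoint case $\alpha < 1/m$.} The same iteration runs provided the one-dimensional argument remains valid on the shrinking available ``room''. At step~$1$, $A_1$ is restricted to $B := [0,1] \setminus (A_2 \cup \dots \cup A_m)$; the assumption $\alpha < 1/m$ gives $\lambda(B) = 1 - (m-1)\alpha > \alpha$, which is exactly what the padding argument requires when carried out inside $B$. Thus $g_1 = 0$ a.e.\ on $B$, and $\int_{C_1 \times A_2 \times \dots \times A_m} f = 0$ for any measurable $C_1 \subseteq B$. Iterating, at step $k$ one needs $\sum_{i<k}\lambda(C_i) < 1 - (m-k+1)\alpha$ for the base case to apply, which holds whenever all $\lambda(C_i)$ are sufficiently small. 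The output is: $\int_{C_1 \times \dots \times C_m} f = 0$ for all pairwise-disjoint $C_1, \dots, C_m$ of sufficiently small measure.

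\emph{Conclusion and main obstacle.} To recover $f = 0$ a.e., partition $[0,1]$ into $N$ equal subintervals $I_1, \dots, I_N$, approximate arbitrary measurable $B_1, \dots, B_m$ by unions of $I_k$'s, and expand $\mathbf{1}_{B_1 \times \dots \times B_m}$ accordingly. Terms indexed by $m$-tuples of distinct indices contribute $0$ by the disjoint iteration, while the ``collision'' terms (some index repeated) form a set of Lebesgue measure $O(1/N)$, whose contribution tends to $0$ by $L^1$-absolute continuity of $|f|\,d\lambda^m$. Taking $N \to \infty$ gives $\int_{B_1 \times \dots \times B_m} f = 0$, and $f = 0$ a.e.\ as before. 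The main obstacle is precisely this last approximation step: the disjoint iteration yields vanishing only on pairwise-disjoint rectangles of small sides, and one must carefully control the collision contributions via absolute continuity to recover the full conclusion.
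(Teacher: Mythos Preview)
Your proof is correct. The paper does not itself prove this lemma (it is quoted from \cite{SJ234}), but it does sketch the same method when proving the more general \refL{L3+}, so a comparison is still possible.

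The core idea is identical: reduce to the one-dimensional case and exploit the strict inequality $\alpha<1/m$ to guarantee enough room in the complement. The organization differs slightly. The paper's sketch (for \refL{L3+}) uses induction on $m$: it fixes $A_1$, applies the inductive hypothesis on $(\oi\setminus A_1)^{m-1}$ to free $A_2,\dots,A_m$ simultaneously, and then applies the $m=1$ case once more to free $A_1$. You instead unwind this into an explicit coordinate-by-coordinate iteration, freeing one $A_i$ at a time via the $m=1$ case on the appropriate complement. Both arrive at the same intermediate conclusion: the integral vanishes over products of sufficiently small pairwise disjoint sets. For the final step, the paper invokes Lebesgue points of $f$ with distinct coordinates, while your partition-plus-collision argument is the integrated version of the same idea (the collision set has measure $O(1/N)$, and absolute continuity does the rest). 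One small expositional point: ``approximate $B_i$ by unions of $I_k$'' is slightly imprecise---what you actually use is the decomposition $B_i=\bigcup_k(B_i\cap I_k)$, so that the pieces $B_i\cap I_{k_i}$ for distinct $k_i$ are automatically disjoint and small; no approximation error needs to be tracked.
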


It was remarked in \cite[Remark 7.4]{SJ234} that the second part (disjoint
subsets) 
of this lemma fails when $\ga=1/m$, \ie, when we consider partitions of
$\oi$ into $m$ disjoint sets of equal measure $1/m$
(we call these \emph{equipartitions});  
a simple counterexample is
provided by 
the following lemma.

\begin{lemma}\label{Lcounter}
Let $m\ge1$.
Suppose that 
\begin{equation}\label{fg}
  f(\xxm)=g(x_1)+\dots+g(x_m)
\end{equation}
for some integrable function $g$ on $\oi$ with $\intoi g=0$.
Then 
$f$ is a symmetric integrable function on $\oi^m$
and
\begin{equation}\label{sw0}
\intaamx f=0  
\end{equation}
for all partitions $\aamq$ of\/ $\oi$
into $m$ disjoint measurable subsets  such that
$\gl(A_1)=\dots=\gl(A_m)=1/m$.
\nopf
\end{lemma}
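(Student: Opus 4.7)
The plan is a direct computation via Fubini's theorem; there is no real obstacle here, since the conclusion is almost an identity. Symmetry of $f$ is immediate from the definition \eqref{fg}, because the sum $g(x_1)+\dots+g(x_m)$ is invariant under any permutation of its arguments. Integrability on $\oi^m$ follows from $g\in L^1(\oi)$, since for each $i$ we have $\int_{\oi^m}|g(x_i)|\,dx_1\cdots dx_m = \|g\|_1$, so $\|f\|_1\le m\|g\|_1$.

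For the main identity, I will expand by linearity and apply Fubini to each summand. We have
\begin{equation*}
  \intaamx f(\xxm)\,dx_1\cdots dx_m
  = \sum_{i=1}^m \intaamx g(x_i)\,dx_1\cdots dx_m,
\end{equation*}
and for each fixed $i$, using $\gl(A_j)=1/m$ for $j\neq i$,
\begin{equation*}
  \intaamx g(x_i)\,dx_1\cdots dx_m
  = \Bigpar{\tfrac{1}{m}}^{m-1}\int_{A_i} g.
\end{equation*}
Summing over $i$ and using that the $A_i$ are disjoint with $A_1\cup\dots\cup A_m=\oi$ (up to a null set), we obtain
\begin{equation*}
  \intaamx f = \Bigpar{\tfrac{1}{m}}^{m-1}\sum_{i=1}^m\int_{A_i}g
  = \Bigpar{\tfrac{1}{m}}^{m-1}\int_0^1 g = 0
\end{equation*}
by the assumption $\intoi g=0$, which proves \eqref{sw0}.

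The key feature that makes the computation succeed is that an equipartition exhausts $\oi$: the sum $\sum_i\gl(A_i)$ equals $1$, so the disjoint union $\bigcup_i A_i$ is all of $\oi$ and the hypothesis $\intoi g=0$ can be invoked. For $\sum_i\ga_i<1$ this cancellation is unavailable, which is consistent with the second part of \refL{L3} being sharp in that regime and with the $\ga=1/m$ case requiring the extra algebraic arguments alluded to after \refL{L3}.
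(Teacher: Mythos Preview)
Your proof is correct and follows essentially the same computation as the paper: expand $\intaamx f$ by linearity, apply Fubini to each term $g(x_i)$ using $\gl(A_j)=1/m$ for $j\neq i$, and then sum $\int_{A_i}g$ over the partition to obtain $\intoi g=0$. The only difference is that you spell out symmetry and integrability of $f$ explicitly, which the paper leaves as evident.
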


\begin{proof}
If $\aamq$ is an equipartition of $\oi$, then 
\begin{multline}
\label{fg0}
  \intaamx f(\xxm) =\sumim \Bigparfrac1m^{m-1}\int_{A_i} g(x_i)\dd x_i
\\
=m^{1-m}\sumim \int_{A_i} g(x)\dd x
=m^{1-m}\intoi g(x)\dd x=0.	
\end{multline}
\end{proof}

Moreover, it was shown in 
\cite[Proof of Lemma 9.4 and the comments after it]
{SJ234}, 
see also \cite[Lemma 10.3]{SJ234},
that if $m=2$ and $f$ is symmetric
with $\int_{A_1\times A_2}f=0$ for every equipartition $\set{A_1,A_2}$,
then $f$ has to be of the form \eqref{fg} a.e.
We shall here extend this to any $m$, thus showing that the converse to
\refL{Lcounter} holds.

\begin{lemma}\label{Lmain}
Let $m\ge1$.
Suppose that $f:\oi^m\to\bbC$ is a symmetric integrable function
such that 
\begin{equation}\label{sw}
\intaamx f=0  
\end{equation}
for all partitions $\aamq$ of\/ $\oi$
into $m$ disjoint measurable subsets  such that
$\gl(A_1)=\dots=\gl(A_m)=1/m$.
Then 
\begin{equation}\label{fgae}
  f(\xxm)=g(x_1)+\dots+g(x_m)
\qquad a.e.
\end{equation}
for some integrable function $g$ on $\oi$ with $\intoi g=0$.  
\end{lemma}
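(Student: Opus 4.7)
The plan is to proceed by induction on $m$. The case $m = 1$ is trivial (take $g = f$), and the case $m = 2$ is essentially established in \cite[Lemma 9.4]{SJ234}, so assume $m \ge 3$ and that the statement holds for $m - 1$. For each measurable $A \subset [0,1]$ with $\lambda(A) = 1/m$, I introduce the partially integrated slice
\[
\tilde f_A(x_2, \ldots, x_m) := \int_A f(x_1, x_2, \ldots, x_m)\,dx_1,
\]
which is symmetric in $(x_2, \ldots, x_m)$ by symmetry of $f$. For any partition $(A_2, \ldots, A_m)$ of $[0,1]\setminus A$ into $m-1$ pieces of Lebesgue measure $1/m$, $(A, A_2, \ldots, A_m)$ is an equipartition of $[0,1]$, so the hypothesis gives $\int_{A_2\times\cdots\times A_m}\tilde f_A = 0$. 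Rescaling $[0,1]\setminus A$ to a probability space (of mass $(m-1)/m$ normalized to $1$), $\tilde f_A$ satisfies the hypothesis of the lemma with $m - 1$ in place of $m$, and the inductive hypothesis produces an integrable $g_A:[0,1]\setminus A \to \mathbb{C}$ with $\int g_A\,d\lambda = 0$ and
\[
\tilde f_A(x_2, \ldots, x_m) = g_A(x_2) + \cdots + g_A(x_m)
\]
a.e.\ on $([0,1]\setminus A)^{m-1}$.

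The next step is to patch the family $\{g_A\}$ into a single universal $g$ on $[0,1]$. The expected form, obtained by assuming the conclusion $f = \sum g(x_i)$ and computing directly, is $g_A(x) = g(x)/m + (m-1)^{-1}\int_A g$, so that $g_A - g_{A'}$ should be constant on the common domain for any two sets $A, A'$ of measure $1/m$. Subtracting the slice decompositions on the overlap yields, for disjoint $A, A'$,
\[
\sum_{i=2}^m [g_A(x_i) - g_{A'}(x_i)] = \int_A f(y, x_2, \ldots, x_m)\,dy - \int_{A'} f(y, x_2, \ldots, x_m)\,dy
\]
a.e.\ on $([0,1]\setminus (A\cup A'))^{m-1}$. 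I would then argue that $g_A - g_{A'}$ is a.e.\ constant, either by an additional perturbation (swapping infinitesimal subsets between $A$ and $A'$ and passing to the derivative of the equipartition identity), or by applying the inductive hypothesis a second time to the right-hand side above, viewed as a symmetric function of $(x_2,\ldots,x_m)$. Once this is established, $g$ can be defined globally by $g(x) := m\,g_A(x) + c_A$ with the additive constants $c_A$ chosen compatibly across different $A$; the condition $\int g = 0$ then follows by integration over $[0,1]$, and $f = \sum g(x_i)$ a.e.\ follows from the slice identities together with \refL{L3}.

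The principal obstacle is this patching step: although the equipartition constraint passes cleanly to each slice, showing that the local $g_A$'s line up into a single function requires delicate propagation of the rigidity of the equipartition constraint across slices. An alternative that I would consider in parallel uses the Hoeffding (ANOVA) decomposition $f = \sum_{k=0}^m f_k$, where the equipartition integral of the $f_1$-component vanishes automatically so that the hypothesis becomes a single scalar relation per equipartition coupling $f_0$ and the higher-order components $f_k$ for $k \ge 2$; the hope is then to show that each $f_k$ with $k \ne 1$ (and $f_0$) vanishes individually by testing against suitably chosen families of equipartitions. However, separating these contributions from a single scalar constraint appears to reduce to essentially the same perturbative rigidity problem, so I expect the heart of the argument to be the same in either formulation.
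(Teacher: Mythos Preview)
Your inductive strategy is natural but differs substantially from the paper's argument, and the patching step you flag is a genuine gap that you do not close. The difficulty is real: from the slice identity $\tilde f_A = \sum_{i\ge 2} g_A(x_i)$ on $([0,1]\setminus A)^{m-1}$ alone, nothing forces $g_A - g_{A'}$ to be constant on the overlap. Your proposed fixes do not work as stated: a second application of the inductive hypothesis to $\tilde f_A - \tilde f_{A'}$ merely confirms that this difference has the form $\sum_i h(x_i)$, which says nothing about $h$ being constant, and the perturbation idea (swapping infinitesimal mass between $A$ and $A'$) is not developed into an actual identity. The final appeal to \refL{L3} to pass from slice identities to $f=\sum g(x_i)$ is also shaky, since the slice information lives only on $([0,1]\setminus A)^{m-1}$ and the quantifier exchange ``for each $A$, for a.e.\ $(x_2,\dots,x_m)$'' $\leadsto$ ``for a.e.\ $(x_2,\dots,x_m)$, for every admissible $A$'' needs justification over an uncountable family of $A$'s.

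The paper avoids the patching problem entirely by a different route. It does not induct from $m$ to $m-1$: for $m\ge3$ it fixes a set $B$ of measure $2/m$ together with an equipartition $A_3,\dots,A_m$ of $[0,1]\setminus B$, applies the $m=2$ case of the lemma on $B$ to the partially integrated function, and then---this is the key idea your sketch is missing---uses the \emph{strict} inequality $\lambda(B_0^{\mathsf c})>(m-2)/m$ (where $B_0$ is a small set carrying the supports of two mean-zero test functions $\psi_1,\psi_2$) to invoke the disjoint-sets clause of \refL{L3} in the remaining $m-2$ variables. This produces a four-point relation for $\int f\,\varphi_3\cdots\varphi_m$, and hence the vanishing of every Fourier coefficient $\hat f(n_1,\dots,n_m)$ with at least two nonzero indices; the conclusion is then immediate. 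Your ANOVA/Hoeffding remark points in exactly this direction, but the mechanism that isolates and kills the higher-order components is the interplay between the $m=2$ base case and \refL{L3} in the regime $\alpha<1/(m-2)$, not a descent from $m$ to $m-1$.
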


\begin{proof}
  The lemma is trivial when $m=1$. 
The case $m=2$ is, as said above,
  proved in \cite{SJ234}, but for completeness, we repeat the argument:

Let $f_1(x):=\intoi f(x,y)\dd y$. Then, for every subset $A\subset\oi$ with
$\gl(A)=1/2$, \eqref{sw} with  $A_1:=A$ and $A_2:=\oi\setminus A$ yields
\begin{equation}
  \begin{split}
0&=\int_{A_1\times A_2}f(x,y)\dd x\dd y 
= \int_{A\times\oi} f(x,y) \dd x\dd y-\int_{A\times A}f(x,y) \dd x\dd y
\\&
= \int_A f_1(x) \dd x-\int_{A\times A}f(x,y) \dd x\dd y
\\&
= \int_{A\times A}\bigpar{f_1(x)+f_1(y)-f(x,y)} \dd x\dd y.
  \end{split}
\end{equation}
The integrand in the last integral is symmetric, and it follows by 
\cite[Lemma  7.6]{SJ234} that it vanishes a.e., which proves \eqref{fgae}
with $g=f_1$; moreover, arguing as in \eqref{fg0}, for any equipartition
\set{A_1,A_2} of \oi,
\begin{equation}
  \begin{split}
0&=\int_{A_1\times A_2}f(x,y)\dd x\dd y 
= \frac12\intoi g(x)\dd x,
  \end{split}
\end{equation}
and thus $\intoi g = 0$, completing the proof when $m=2$.

Thus suppose in the remainder of the proof that $m\ge3$. 

\stepx{}\label{stepa}
Fix a subset $B\subset\oi$ with measure $\gl(B)=2/m$, and fix an
equipartition of the complement $\oi\setminus B$ into $m-2$ sets
$A_3,\dots,A_m$ of equal measure $1/m$.
Let 
\begin{equation}
f_2(x_1,x_2):=\int_{A_3\times\dots\times A_m}f(x_1,x_2,\dots,x_m)
\dd x_3\dotsm\dd x_m.  
\end{equation}
Then the assumption \eqref{sw} says that for any equipartition $B=A_1\cup
A_2$ of $B$ into two disjoint subsets of equal measure,
\begin{equation}
  \int_{A_1\times A_2}f_2(x_1,x_2)\dd x_1\dd x_2=0.
\end{equation}
The set $B$ is, as a measure space, isomorphic to $[0,2/m]$, and by a
trivial rescaling, the case $m=2$ shows that there exists an integrable
function $h$ on $B$ with $\int_B h=0$ such that
\begin{equation}
  f_2(x_1,x_2)=h(x_1)+h(x_2), \qquad\text{\aex{} }x_1,x_2\in B.
\end{equation}
This means that if $\psi_1$ and $\psi_2$ are bounded functions on $\oi$ such
that 
$\intoi\psi_1=\intoi\psi_2=0$ and $\supp(\psi_1)\cup\supp(\psi_2)\subseteq B$, 
then
  \begin{equation}\label{lars}
	\begin{split}
&
\int_{\oi^m} f(\xxm)\psi_1(x_1)\psi_2(x_2)
\etta_{A_3}(x_3)\dotsm \etta_{A_m}(x_m)
\dd x_1\dotsm\dd x_m
\\&\quad
=\int_{B\times B}f_2(x_1,x_2)\psi_1(x_1)\psi_2(x_2)\dd x_1\dd x_2	  
\\&\quad
=\int_B h(x_1)\psi_1(x_1)\dd x_1\int_B \psi_2(x_2)\dd x_2
+\int_B \psi_1(x_1)\dd x_1\int_B h(x_2)\psi_2(x_2)\dd x_2
\\&\quad
=0.
	\end{split}
\raisetag\baselineskip
  \end{equation}

\stepx{}\label{stepb}
Let us instead start with two bounded functions $\psi_1$ and $\psi_2$ on
$\oi$ such that 
$\intoi\psi_1=\intoi\psi_2=0$, and assume that
$\gl(\supp(\psi_1))+\gl(\supp(\psi_2))<2/m$.
Let $B_0:=\supp(\psi_1)\cup\supp(\psi_2)$ and $\boc:=\oi\setminus B_0$.
Then $\gl(B_0)<2/m$ and $\gl(\boc)=1-\gl(B_0)>(m-2)/m$.

Define 
\begin{equation}\label{f3}
  f_3(x_3,\dots,x_m):=\int_{\oi^2}
  f(x_1,x_2,\dots,x_m)\psi_1(x_1)\psi_2(x_2)\dd x_1\dd x_2.
\end{equation}
For any disjoint sets $A_3,\dots,A_m\subset \boc$ with
$\gl(A_3)=\dots=\gl(A_m)=1/m$, we can use Step \ref{stepa} with
$B:=\oi\setminus\bigcup_{3}^m A_i\supset B_0$ and conclude by \eqref{lars}
that
\begin{equation}\label{zam}
  \int_{A_3\times\dots\times A_m} f_3(x_3,\dots,x_m)
=
0.
\end{equation}
The set $\boc$ is, as a measure space up to a trivial rescaling of the measure,
isomorphic to $\oi$. Since $\gl(\boc)>(m-2)/m$, it follows by the second
part of \refL{L3} that \eqref{zam} (for arbitrary $A_3,\dots,A_m$ as above)
implies 
\begin{equation}\label{f0bc}
  f_3(x_3,\dots,x_m)=0, \qquad\text{a.e. }x_3,\dots,x_m\in\boc.
\end{equation}

\stepx{}\label{stepc}
Fix bounded functions $\gf_3,\dots,\gf_m$ on $\oi$.
For $B\subseteq\oi$, define
\begin{equation}\label{fb}
  \begin{split}
f_B(x_1,x_2):=
\int_{(B\comp)^{m-2}} f(x_1,\dots,x_m)\gf_3(x_3)\dotsm\gf_m(x_m).
  \end{split}
\end{equation}
If $\gl(B)>0$ and $\psi_1$ and $\psi_2$ are bounded functions with 
$\supp(\psi_\nu)\subseteq B$, $\gl(\supp(\psi_\nu))<1/m$ and
$\intoi\psi_\nu=0$, $\nu=1,2$, then Step \ref{stepb} shows, 
using  \eqref{fb},  \eqref{f3} and \eqref{f0bc}, 
since $B_0\subseteq B$ and thus $B\comp\subseteq\boc$, 
\begin{equation}\label{fb0}
  \begin{split}
\int_{\oi^2}f_B(x_1,x_2)\psi_1(x_1)\psi_2(x_2)
&=
\int_{(B\comp)^{m-2}} f_3(x_1,\dots,x_m)\gf_3(x_3)\dotsm\gf_m(x_m)\\
&=0.  	
  \end{split}
\raisetag{\baselineskip}
\end{equation}

Now suppose that $B$ is open, and $x_1,x_1',x_2,x_2'\in B$. 
For small enough $\eps>0$, the functions
\begin{equation}
\psi_\nu(x):=\frac{1}{2\eps}\bigpar{\etta_{(x_\nu-\eps,x_\nu+\eps)}(x)
-\etta_{(x'_\nu-\eps,x'_\nu+\eps)}(x)},
\qquad \nu=1,2,
\end{equation}
satisfy the conditions above and thus \eqref{fb0} holds.
Letting $\eps\to0$, it follows that if 
$(x_1,x_2), (x_1,x_2'), (x_1',x_2), (x_1',x_2')$
are Lebesgue points of $f_B$, then 
\begin{equation}\label{4point}
 f_B(x_1,x_2) -f_B(x_1,x_2') -f_B(x_1',x_2) +f_B(x_1',x_2')=0.
\end{equation}
Thus, \eqref{4point} holds for \aex{} $x_1,x_1',x_2,x_2'\in B$.

Consider now the countable collection $\cB$ of sets $B\subset(0,1)$ that are
unions of four open intervals with rational endpoints.
It follows that for \aex{} $x_1,x_1',x_2,x_2'\in \oi$,
\eqref{4point} holds for every set $B\in\cB$ such that
$x_1,x_1',x_2,x_2'\in B$.

Consider such a 4-tuple $x_1,x_1',x_2,x_2'\in \oi$. There exists a 
decreasing sequence $B_n$ of sets in $\cB$ with $\bigcup_1^\infty
B_n=\set{x_1,x_1',x_2,x_2'}$. Then \eqref{4point} holds for each $B_n$,
and by \eqref{fb} and dominated convergence,
$f_{B_n}(x,y)\to f_{\emptyset}(x,y)$ for all $x,y\in\oi$; hence,
\begin{equation}\label{4point4}
 f_\emptyset(x_1,x_2) -f_\emptyset(x_1,x_2') -f_\emptyset(x_1',x_2) +f_\emptyset(x_1',x_2')=0.
\end{equation}

\stepx\label{stepd}
Let $\gf_1,\dots,\gf_m$ be bounded functions on $\oi$
such that $\intoi\gf_1=\intoi\gf_2=0$.
Step \ref{stepc} shows that \eqref{4point4} holds 
for \aex{} $x_1,x_1',x_2,x_2'\in \oi$. 
We may thus fix $x_1',x_2'\in\oi$ such that \eqref{4point4} holds for \aex{}
$x_1,x_2$. 
Then multiply \eqref{4point4} by
$\gf_1(x_1)\gf_2(x_2)$ and integrate over $x_1,x_2\in\oi$. 
Since $\intoi\gf_1=\intoi\gf_2=0$, 
the integrals with the last three terms on the \lhs{} of \eqref{4point4}
vanish, and the result is
\begin{equation}
  \int_{\oi^2} f_\emptyset(x_1,x_2)\gf_1(x_1)\gf_2(x_2)\dd x_1\dd x_2=0.
\end{equation}
By the definition \eqref{fb}, this says
\begin{equation}\label{int0}
  \begin{split}
\int_{\oi^m}&f(x_1,\dots,x_m)\gf_1(x_1)\gf_2(x_2)\gf_3(x_3)\dotsm\gf_m(x_m)
=0.	
  \end{split}
\end{equation}

\stepx\label{stepe}
We may conclude in several ways. The perhaps simplest is to take 
$\gf_j(x)=e^{2\pi\ii n_jx_j}$, $j=1,\dots,m$ with $n_j\in\bbZ$ and
$n_1,n_2\neq0$. Step \ref{stepd} then applies and 
\eqref{int0} says that the Fourier coefficient
\begin{equation}
  \hf(n_1,\dots,n_m)=0
\end{equation}
when $n_1,n_2\neq0$.
Since $f$ is symmetric, it follows that $\hf(n_1,\dots,n_m)=0$ as soon as at
least two of the indices $n_1,\dots,n_m$ are non-zero.

Furthermore, let
\begin{equation}
  g(x_1):=\int_{\oi^{m-1}}f(\xxm)\dd x_2\dotsm \dd x_m-\int_{\oi^m} f,
\end{equation}
and note that $g$ is a function on $\oi$ with $\intoi g=0$, and let
\begin{equation}\label{h}
  h(x_1,\dots,x_m):=\sumim g(x_i)+\int_{\oi^m} f.
\end{equation}
Then 
$\hh(n_1,\dots,n_m)=0=\hf(n_1,\dots,n_m)$
as soon as at
least two of the indices $n_1,\dots,n_m$ are non-zero.
Moreover, when $n_1\neq0$,
\begin{equation}
  \widehat h(n_1,0,\dots,0)=\int_{\oi^m} h(x_1,\dots,x_n)e^{2\pi\ii n_1x_1} = 
\hg(n_1) =\hf(n_1,0,\dots,0)
\end{equation}
and thus by symmetry $\hh(n_1,\dots,n_m)=\hf(n_1,\dots,n_m)$ 
also when exactly one index $n_1,\dots,n_m$ is non-zero.
Finally, since
$\intoi g(x)\dd x=0$,
\begin{equation}
  \widehat h(0,\dots,0)=\int_{\oi^m} h = \int_{\oi^m} f =\hf(0,\dots,0).
\end{equation}
Consequently, $\hh(\nnm)=\hf(\nnm)$ for all $\nnm$ and thus $h=f$ a.e.

\stepx{}
We have shown that \aex{} $f=h$, given by \eqref{h}. 
Let $a:=\int f$; it remains to show that $a=0$. 
This is easy; using \eqref{h} and \refL{Lcounter},
  \begin{equation}
\intaamx f = 
\intaamx h 
=\intaamx a
= a\gl(A_1)\dotsm\gl(A_m),	
  \end{equation}
and thus the assumption \eqref{sw} yields $a=0$. 
\end{proof}

\begin{remark}\label{Rejsymm}
  As remarked in \citet[Remark 9.5]{SJ234}, it is essential that $f$ is
  symmetric in \refL{Lmain} (unlike \refL{L3}). 
For example, it is easily seen that 
the condition \eqref{sw} is also satisfied 
by every anti-symmetric $f$ such that the margin 
\begin{equation}\label{er}
\intoi f(x_1,\dots,x_m)\dd x_m=0  
\end{equation}
for \aex{} $x_1,\dots,x_{m-1}$;
in fact, \eqref{er} implies, for any partition \aamq, 
\begin{equation*}
  \begin{split}
\intaamx f
\dd x_1\dotsm \dd x_m
=-\sum_{k=1}^{m-1}\int_{A_1\times\dots\times A_{m-1}\times A_k} f
\dd x_1\dotsm \dd x_m
=0,
  \end{split}
\end{equation*}
since  each integral in the sum vanishes by the anti-symmetry.
As a concrete example, for any $m\ge2$, we may take the modified discriminant
\begin{equation}
  f(x_1,\dots,x_m) = e^{2\pi\ii (x_1+\dots+ x_m)}
\prod_{j<k}\bigpar{e^{2\pi\ii x_j}-e^{2\pi\ii x_k}}
\end{equation}
(or its real part).

For $m=2$, it is easy to see that every $f$ satisfying \eqref{sw} for all
equipartitions $\aamq$ is the sum of a symmetric function satisfying
\eqref{fgae} and an anti-symmetric function satisfying \eqref{er}, see
\cite[Remark 9.5]{SJ234}. For $m\ge3$, we do not know any characterization
of general $f$ satisfying \eqref{sw}, and we leave that as an open problem:
\end{remark}

\begin{problem}\label{Pasymm}
Find all integrable functions $f$ on $\oi^m$
(not necessarily symmetric) that satisfy
\eqref{sw}
for all partitions $\aamq$ of\/ $\oi$
into $m$ disjoint measurable subsets  such that
$\gl(A_1)=\dots=\gl(A_m)=1/m$.
\end{problem}

We end this section with another, much simpler, extension of 
\refL{L3}  to
$\gamm$ that may be different and possibly with $\sumim\ga_i<1$, 
with exception only of the exceptional case treated in \refL{Lmain} when
all $\ga_i$ are equal to $1/m$.
\begin{lemma}
  \label{L3+}
Let $m\ge1$ and let $\gamm\in(0,1)$ with $\sumim \ga_i\le1$.
Suppose that 
$f$ is an integrable function on $\oi^m$
such that 
\begin{equation}\label{swa}
\intaamx f=0  
\end{equation}
for all sequences $\aam$ of
disjoint measurable subsets of\/ $\oi$ such that
$\gl(A_i)=\ga_i$, $i=1,\dots,m$.
Suppose further that either
\begin{romenumerate}[-10pt]
\item $\sumim\ga_i<1$, or
\item $\sumim\ga_i=1$ but $(\gamm)\neq(1/m,\dots,1/m)$, and $f$ is symmetric.
\end{romenumerate}
Then $f=0$ a.e.
\end{lemma}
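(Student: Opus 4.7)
The strategy is to reduce both cases to the disjoint version of Lemma~\ref{L3} (applied with a common measure $\gb<1/m$), by iteratively ``freeing'' the measure constraints on the $A_i$ via a swap argument; case~(ii) additionally requires an auxiliary $m=2$ base case that is the main analytical difficulty.

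For case~(i), the slack $1-\sumim\ga_i>0$ enables a \emph{relaxation} step. Fix disjoint $A_2,\dots,A_m$ with $\gl(A_i)=\ga_i$, set $R:=\oi\setminus\bigcup_{i\ge 2}A_i$, and put $g(x_1):=\int_{A_2\times\dots\times A_m}f(x_1,x_2,\dots,x_m)\dd x_2\dotsm\dd x_m$. The hypothesis reads $\int_{A_1}g=0$ for every $A_1\subset R$ with $\gl(A_1)=\ga_1$; since $\gl(R)>\ga_1$, comparing two such $A_1$'s that differ by a swap $C\subset A_1\leftrightarrow D\subset R\setminus A_1$ of equal measure yields $\int_C g=\int_D g$, so by Lebesgue differentiation $g$ is a.e.\ constant on $R$, and $\int_{A_1}g=\ga_1 c=0$ forces $c=0$. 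Hence $\int_{B_1\times A_2\times\dots\times A_m}f=0$ for every measurable $B_1\subset R$. Iterating coordinate by coordinate, keeping each freed $B_j$ small enough to preserve the slack, we obtain $\int_{B_1\times\dots\times B_m}f=0$ for all disjoint $B_i$ of a common small measure $\gb<1/m$; Lemma~\ref{L3} then yields $f=0$ a.e.

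For case~(ii) with $m\ge 3$, since $(\gamm)\ne(1/m,\dots,1/m)$ we may relabel so that $\ga_1\ne\ga_2$. Fix disjoint $A_3,\dots,A_m$ with $\gl(A_i)=\ga_i$, let $R:=\oi\setminus\bigcup_{i\ge 3}A_i$ (so $\gl(R)=\ga_1+\ga_2$), and define
\[
h(x_1,x_2):=\int_{A_3\times\dots\times A_m}f(x_1,\dots,x_m)\dd x_3\dotsm\dd x_m,
\]
which is symmetric in $(x_1,x_2)$ since $f$ is. After rescaling $R$ to $\oi$, the hypothesis for partitions $R=A_1\cup A_2$ with $\gl(A_i)=\ga_i$ reduces exactly to the $m=2$ subcase of~(ii) applied to $h$ with the unequal measures $\ga_1/(\ga_1+\ga_2),\ga_2/(\ga_1+\ga_2)$. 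Granted the base case, $h=0$ a.e.\ on $R^2$, so $\int_{B_1\times B_2\times A_3\times\dots\times A_m}f=0$ for arbitrary measurable $B_1,B_2\subset R$; the case~(i) relaxation argument then frees $A_3,\dots,A_m$ in turn (with $B_1,B_2$ chosen small enough to maintain slack $\gl(R_k)>\ga_k$), and Lemma~\ref{L3} finishes.

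The main obstacle is the \emph{base case $m=2$ of~(ii)}: a symmetric integrable $f$ on $\oi^2$ with $\int_{A\times A^c}f=0$ for every $A$ of measure $\ga_1\ne 1/2$ must vanish a.e. The plan is a Fourier argument paralleling the proof of Lemma~\ref{Lmain}. Write $f(x,y)=\sum_{j,k}c_{j,k}e^{2\pi\ii(jx+ky)}$ with $c_{j,k}=c_{k,j}$, and $v_j:=\int_A e^{-2\pi\ii jx}\dd x$ (so $v_0=\ga_1$); the hypothesis becomes the quadratic identity
\[
\ga_1(1-\ga_1)c_{0,0}+(1-2\ga_1)\sum_{j\ne 0}c_{j,0}v_j-\sum_{j,k\ne 0}c_{j,k}v_jv_k=0
\]
for every $A$ of measure $\ga_1$. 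Upon linearizing this identity via the swap $A\mapsto(A\setminus C)\cup D$ (with $\gl(C)=\gl(D)$ small) and invoking Parseval in the arbitrary mean-zero test direction $\mathbf{1}_D-\mathbf{1}_C$, one concludes $c_{j,k}=0$ for all $j,k\ne 0$. Hence $f(x,y)=c_{0,0}+u(x)+u(y)$ with $\int u=0$. Substituting back into the hypothesis gives $c_{0,0}\ga_1\ga_2+(\ga_2-\ga_1)\int_A u=0$ for every $A$ of measure $\ga_1$; since $\ga_2-\ga_1\ne 0$, $\int_A u$ is independent of $A$, so Lemma~\ref{L3} (for $m=1$) gives $u\equiv 0$, and then $c_{0,0}=0$, so $f=0$ a.e. The role of $\ga_1\ne 1/2$ is precisely in the factor $\ga_2-\ga_1\ne 0$: at equipartitions that factor vanishes, $u$ is unconstrained, and the conclusion genuinely fails (compare Lemmas~\ref{Lcounter} and~\ref{Lmain}).
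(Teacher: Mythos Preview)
Your proof is correct and follows essentially the same strategy as the paper's: both iterate the $m=1$ case of Lemma~\ref{L3} to free the measure constraints on the $A_i$, and both isolate the $m=2$ subcase of~(ii) as the crucial base case. The paper packages this as a single induction on $m$ (fixing $A_1$ and reducing to a rescaled $(m-1)$-variable problem on $\oi\setminus A_1$) and simply cites \cite[Lemma~9.4]{SJ234} for the $m=2$ base case; you instead handle case~(i) by direct coordinate-by-coordinate iteration and case~(ii) by jumping straight to $m=2$ before freeing $A_3,\dots,A_m$. These are reorganizations of the same idea.

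Where you go further than the paper is in sketching a proof of the $m=2$ base case itself. The outline is right and the final step---using $\ga_1\ne\ga_2$ exactly where it is needed---is clean. One expository caution: the phrase ``linearizing \dots\ and invoking Parseval'' understates what the argument requires. The first variation of the quadratic identity at a fixed $A$ has $A$-dependent coefficients (through the $w_j$), so testing against a single swap direction $\mathbf{1}_D-\mathbf{1}_C$ does not by itself kill the off-diagonal $c_{j,k}$; one must also vary $A$, or equivalently take a second difference with two disjoint swaps, to obtain the four-point identity $f(b,d)-f(a,d)-f(b,c)+f(a,c)=0$ a.e.\ (cf.\ Step~3 in the proof of Lemma~\ref{Lmain}). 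From there the additive form $f(x,y)=c_{0,0}+u(x)+u(y)$ follows, and your endgame is correct. This is a gap in exposition rather than in the mathematics.
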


\begin{proof}
The case $m=1$ is included in \refL{L3}. (For $m=1$, (ii) cannot occur.)
The case (ii) with $m=2$ is  \cite[Lemma 9.4]{SJ234}.
The remaining cases are proved by induction (on $m$)
in the same way as the special
case in \cite[Lemma 7.3]{SJ234}; we sketch the proof and refer to \cite{SJ234}
for omitted details. 

We thus assume $m\ge2$, and in the case $m=2$
that (i) holds. Furthermore, if (ii) holds, we may assume
$\ga_m\neq\ga_{m-1}$ by permuting the coordinates.

We fix a set $A_1$ with $\gl(A_1)=\ga_1$ and consider the function
$$f_{A_1}(x_2,\dots,x_m):=\int_{A_1}f(x_1,\dots,x_m)\dd x_1
$$ 
on $B^{m-1}$
where $B:=\oi\setminus A_1$. $B$ is as a measure space isomorphic to $\oi$,
after rescaling the measure, and the hypothesis implies that $f_{A_1}$
satisfies a corresponding hypothesis on $B^{m-1}$; hence $f_{A_1}=0$ \aex{}
on $B^{m-1}$
by induction. It follows that \eqref{swa} holds for all disjoint sets $\aam$
with $\gl(A_1)=\ga_1$ and $\gl(A_2),\dots,\gl(A_m)$ arbitrary.

Now instead fix any disjoint $A_2,\dots,A_m$ with
$\sum_{i=2}^m\gl(A_i)<1-\ga_1$, 
and let $B':=\oi\setminus\bigcup_{i=2}^m A_i$.
Then
\eqref{swa} thus holds for any $A_1\subset B'$ with $\gl(A_1)=\ga_1$, and it
follows from the case $m=1$ applied to 
$\fAAm(x):=\int_{A_2\times\dots\times A_m}f(x,x_2,\dots,x_m)
 \dd x_2\dotsm\dd x_m$ that  $\fAAm(x)=0$ \aex; hence \eqref{swa} holds for
 all disjoint $A_1,\dots,A_m$ with $\sum_{i=2}^m\gl(A_i)<1-\ga_1$.
It follows that $f(x_1,\dots,x_m)=0$  for every Lebesgue point
$(x_1,\dots,x_m)$ of $f$ with $x_1,\dots,x_m$ distinct.
\end{proof}

\begin{remark}\label{RL3+}
  In \refL{L3+}(ii), the assumption that $f$ is symmetric is essential,
as is seen by the counterexample in \refR{Rejsymm}.
\end{remark}

\begin{remark}\label{RL3++}
The proof shows that in the case $\sumim\ga_i=1$, it suffices to assume that
$f$ is symmetric in the last two variables, provided $\ga_{m-1}\neq\ga_m$.
\end{remark}

We apply the results above 
to the property $\ctpw(F;\gamm)$. By \eqref{e34}, this property
says that \eqref{swa} holds for $f:=\tpsifw-p^{e(F)}$ and 
all disjoint
subsets $\aam$ of $\oi$ with $\gl(A_i)={\ga_i}$, $i=1,\dots,m$.

\begin{lemma}
  \label{LW}
Let $m\ge1$ and let $\gamm\in(0,1)$ with $\sumim \ga_i\le1$.
Suppose that\/ $W$ is a graphon and $p\in\oi$.
\begin{alphenumerate}[-10pt]
\item \label{lw1}
If\/ $(\gamm)\neq(1/m,\dots,1/m)$, 
then $\ctpw(F;\gamm)$ holds if and only if 
\begin{equation}
  \tpsifw(\xxm)=p^{e(F)}\quad\text{a.e.}
\end{equation}
\item \label{lw2}
If\/ $(\gamm)=(1/m,\dots,1/m)$, 
then $\ctpw(F;\gamm)$ holds if and only if 
there exists an integrable function $h$ with $\intoi h=p^{e(F)}/m$ such that
\begin{equation}\label{elw2}
  \tpsifw(\xxm)=\sumim h(x_i)\quad\text{a.e.}
\end{equation}
\end{alphenumerate}
\end{lemma}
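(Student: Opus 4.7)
The plan is to apply the analytic Lemmas \refL{L3+} and \refL{Lmain} to the function $f(\xxm) := \tpsifw(\xxm) - p^{e(F)}$. The key observation upfront is that $\tpsifw$ is symmetric in its arguments, directly from its definition as an average of $\psifw$ over all permutations of $V(F)$ (cf.\ the symmetrization \eqref{symm}), so $f$ is symmetric and integrable. By \refD{DW}\ref{dwl}, the property $\ctpw(F;\gamm)$ is exactly the statement that $\intaamx f = 0$ for all disjoint $\aam$ with $\gl(A_i)=\ga_i$.

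For part \ref{lw1}, the hypothesis $(\gamm) \neq (\mmm)$ puts us into one of the two cases of \refL{L3+}: either $\sumim \ga_i < 1$ (case (i)) or $\sumim \ga_i = 1$ with the tuple not equal to $(\mmm)$ (case (ii), where symmetry of $f$ is invoked). In either case the lemma yields $f = 0$ a.e., i.e., $\tpsifw = p^{e(F)}$ a.e. The converse is immediate by integrating the constant $p^{e(F)}$ over $\aamx$.

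For part \ref{lw2}, we have $(\gamm) = (\mmm)$, and the assumption is precisely the hypothesis of \refL{Lmain} for $f$. That lemma produces an integrable $g$ on $\oi$ with $\intoi g = 0$ such that $f(\xxm) = \sumim g(x_i)$ a.e. Setting $h(x) := g(x) + p^{e(F)}/m$ absorbs the constant $p^{e(F)}$ into the sum, giving $\tpsifw(\xxm) = \sumim h(x_i)$ a.e., with $\intoi h = p^{e(F)}/m$ as required. For the converse, if $\tpsifw$ has this form, then the same computation as in the proof of \refL{Lcounter} (with $g$ replaced by $h$, whose integral is now nonzero) yields, for every equipartition,
\begin{equation*}
  \intaamx \tpsifw = m^{1-m}\intoi h = m^{1-m}\cdot \frac{p^{e(F)}}{m} = m^{-m}\,p^{e(F)} = p^{e(F)}\prodim\gl(A_i).
\end{equation*}

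Because the analytic work has already been carried out in \refL{L3+} and \refL{Lmain}, no serious obstacle remains; the proof is essentially a translation between the graphon formulation and the analytic one. The only subtlety worth flagging is the shift by the constant $p^{e(F)}/m$ when passing from the zero-mean representation supplied by \refL{Lmain} to the prescribed normalization in \eqref{elw2}.
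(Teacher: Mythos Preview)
Your proof is correct and follows essentially the same approach as the paper: apply \refL{L3+} (for part \ref{lw1}) and Lemmas \ref{Lcounter}--\ref{Lmain} (for part \ref{lw2}) to $f=\tpsifw-p^{e(F)}$, then shift by the constant $p^{e(F)}/m$ to pass between $g$ and $h$. The paper's proof is just a terser version of what you wrote.
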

\begin{proof}
  Part \ref{lw1} follows directly from \eqref{e34} and \refL{L3+}, 
while \ref{lw2}  follows from Lemmas \ref{Lcounter} and \ref{Lmain}, 
with $h(x)=g(x)+p^{e(F)}/m$.
\end{proof}

We thus see that the exceptional case $\ga_1=\dots=\ga_m=1/m$ in \refT{T=}
is more intricate than the cases covered by \refT{TA}.

We note also a similar result for $\cplw$. 
\begin{lemma}
 \label{LW3}
If\/ $\sumim\ga_i<1$,
then $\cplw(F;\gamm)$ holds if and only if 
\begin{equation}
  \psifw(\xxm)=p^{e(F)}\quad\text{a.e.}
\end{equation}
\end{lemma}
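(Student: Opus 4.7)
My plan is to reduce this immediately to \refL{L3+}\ref{lw1} (the case $\sumim\ga_i<1$), which was established earlier precisely with such applications in mind. The key observation is that in the definition of $\cplw(F;\gamm)$, the condition \eqref{e33} can be rewritten as the vanishing of the integral of a shifted function.

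Specifically, I would set
\begin{equation*}
f(\xxm) := \psifw(\xxm) - p^{e(F)}.
\end{equation*}
The ``if'' direction is immediate: if $\psifw = p^{e(F)}$ a.e., then $\intaamx\psifw = p^{e(F)}\prodim\gl(A_i)$ for every measurable $A_1,\dots,A_m$. For the ``only if'' direction, the assumption $\cplw(F;\gamm)$ says exactly that $\intaamx f = 0$ for every sequence $\aam$ of pairwise disjoint measurable subsets of $\oi$ with $\gl(A_i)=\ga_i$. Since $f$ is bounded (as $W$ is a graphon and $\psifw$ is a product of $W$-values), it is integrable on $\oi^m$.

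Now \refL{L3+}\ref{lw1} applies: we are in case (i) because $\sumim \ga_i < 1$ by hypothesis, and crucially this case does \emph{not} require $f$ to be symmetric (which is good, since for a general labelled graph $F$ the function $\psifw$ need not be symmetric in its arguments). The conclusion is that $f = 0$ a.e., i.e., $\psifw(\xxm) = p^{e(F)}$ for a.e.\ $(\xxm) \in \oi^m$, which is exactly what we want.

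There is essentially no obstacle: the real work has already been done in \refL{L3+}. The only point to verify is that \refL{L3+}\ref{lw1} is the right case to invoke --- we do not need symmetry of $\psifw$, which is why the strict inequality $\sumim \ga_i < 1$ suffices to conclude pointwise (a.e.) equality rather than the weaker additive decomposition obtained in the equipartition case treated by \refL{Lmain}. This also explains why \refT{TA}\ref{ta2} holds with $\cPl$ (labelled, non-averaged), in contrast to the situation when $\sumim\ga_i=1$ where averaging is needed.
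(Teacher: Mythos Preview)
Your proof is correct and is exactly the argument the paper gives (in one line): apply \refL{L3+}(i) to $f:=\psifw-p^{e(F)}$, noting that case~(i) requires no symmetry of $f$. The only slip is a label: you cite ``\refL{L3+}\ref{lw1}'', but \ref{lw1} is a label from \refL{LW}; you mean case~(i) of \refL{L3+}.
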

\begin{proof}
This too  follows from \eqref{e34} and \refL{L3+}.
\end{proof}
In this case we have to assume
$\sumim\ga_i<1$ for the proof, because
$\psifw$ is (in general) not symmetric, \cf{} Remarks \ref{RL3+} and
\ref{Rejsymm}. 

\begin{problem}\label{P4=1}
Does \refL{LW3} hold also if $\sumim\ga_i=1$ with 
$(\gamm)\allowbreak\neq(1/m,\dots,1/m)$?
\end{problem}

\section{Reduction to a two-type graphon}\label{Stwo}

We next reduce the problem by showing that, as for the similar problem
considered by 
\citet{SS:ind},
if the property $\ctpw(F;\gamm)$
is \emph{not} \qr, then there exists a counterexample with a 
2-type graphon.
This reduction reduces our problem to an algebraic one, which we consider in
the next section. 

We state the reduction in a somewhat general form, to be
used together with  \refL{LW}, and 
we give two versions (Theorems \ref{TD0} and \ref{TD}), to handle the two
cases in 
parts \ref{lw1} and \ref{lw2} in \refL{LW}.
The proofs are given later in this section.

\begin{remark}
\refT{TD0} is an extension of \citet[Theorem 5.5]{SJ234}, 
where $\Phi$ is a multiaffine polynomial,
which would be
sufficient for our application here. We nevertheless state \refT{TD0}
in order to show the similarities between Theorems \ref{TD0} and \ref{TD},
and because we now can give a more elegant proof of a more general statement
than in \cite{SJ234}, see \refR{RLD}.
\end{remark}

If  $\Phi\bigpar{(w\xij)_{i<j}}$ is a 
function of the  $\binom m2$ variables $w\xij$, $1\le  i<j\le m$, for some
$m\ge2$, and $W$ is a graphon,
we define, for $x_1,\dots,x_m\in\oi$,
\begin{equation}\label{phiw}
  \phiw(x_1,\dots,x_m):=\Phi\bigpar{(W(x_i,x_j))_{i<j}}.
\end{equation}

\begin{theorem}
  \label{TD0}
Suppose that $\Phi\bigpar{(w\xij)_{i<j}}$ is a 
continuous function of the
  $\binom m2$ variables $w\xij$, $1\le  i<j\le m$, for some $m\ge2$,
and let $a\in\bbR$.
Then the following are equivalent.
  \begin{romenumerate}[-10pt]
\item\label{td0w}
There exists a graphon\/ $W$ 
such that	
\begin{equation}\label{rang0}
  \phiw(\xxm)=a
\end{equation}
for \aex{} $\xxm\in\oi$, but\/ $W$ is \emph{not} \aex{} constant.

\item\label{td02}
There exists a $2$-type graphon\/ $W$ 
such that \eqref{rang0} holds 
for all $\xxm$, but\/ $W$ is
\emph{not} constant. 

\item\label{td0uvs}
There exist numbers $u,v,s\in\oi$, not all equal,  
such
that for every subset $A\subseteq[m]$, if we choose
\begin{equation}\label{uvs0}
  w\xij:=
  \begin{cases}
	u,&i,j\in A, \\
	v,&i,j\notin A, \\
	s,&i \in A,\,j\notin A \text{ or conversely},
  \end{cases}
\end{equation}
then 
\begin{equation}\label{rang0ab}
\Phi((w\xij)_{i<j})=a.
\end{equation}
  \end{romenumerate}
\end{theorem}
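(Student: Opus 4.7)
The plan is to establish (ii) $\Leftrightarrow$ (iii) by elementary construction and (i) $\Rightarrow$ (ii) by extracting a $2$-type graphon from the given $W$. The implication (ii) $\Rightarrow$ (i) is immediate. For (iii) $\Rightarrow$ (ii), fix any measurable partition $\oi = S_1 \cup S_2$ with $\gl(S_1),\gl(S_2)>0$, and define $W:=u$ on $S_1 \times S_1$, $W:=v$ on $S_2 \times S_2$, and $W:=s$ on $(S_1 \times S_2) \cup (S_2 \times S_1)$. This graphon is non-constant (since $u,v,s$ are not all equal), and for any $(\xxm)\in\oi^m$, setting $A:=\set{i:x_i\in S_1}$ makes the matrix $(W(x_i,x_j))_{i<j}$ coincide with $(w\xij)_{i<j}$ of (iii), so $\phiw(\xxm)=a$ identically.

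For the substantive direction (i) $\Rightarrow$ (iii), the idea is to extract $u,v,s$ as block-averages of $W$ on a suitably chosen pair of disjoint cells in $\oi$. First, since $W$ is not \aex{} constant, I will show that there exist disjoint measurable $E_1,E_2 \subseteq \oi$ of positive measure whose three block-averages
\begin{equation*}
u_0 := \frac{1}{\gl(E_1)^2}\int_{E_1\times E_1}W, \qquad v_0 := \frac{1}{\gl(E_2)^2}\int_{E_2\times E_2}W, \qquad s_0 := \frac{1}{\gl(E_1)\gl(E_2)}\int_{E_1\times E_2}W
\end{equation*}
are not all equal; otherwise a standard argument would force $\int_{E\times E}W=c\gl(E)^2$ for every measurable $E$, and hence $W\equiv c$ \aex. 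Next, for every subset $A\subseteq[m]$, the hypothesis $\phiw=a$ \aex{} integrated over $K_1\times\dots\times K_m$, with $K_i=E_1$ when $i\in A$ and $K_i=E_2$ otherwise, yields the integrated identity $\int\phiw=a\prodim\gl(K_i)$. To convert this into the pointwise identity $\Phi((w\xij)_{i<j})=a$ demanded by (iii), I will refine $E_1,E_2$ to sequences of shrinking subcells concentrating around Lebesgue points of $W$, so that the block-averages converge to limits $u,v,s$ not all equal and $W$ concentrates (in $L^1$-mean) around these values on each of the three boxes. Uniform continuity of $\Phi$ on $\oi^{\binom{m}{2}}$, combined with $\phiw=a$ \aex, will then force $\Phi((w\xij)_{i<j})=a$ in the limit.

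The main obstacle is the two on-diagonal boxes $E_j^{(n)}\times E_j^{(n)}$: while the off-diagonal box enjoys $L^1$ convergence of $W$ to a constant $s=W(y_1,y_2)$ at any Lebesgue point $(y_1,y_2)\in\oi^2$ off the diagonal, the diagonal $\set{(y,y):y\in\oi}$ is a $2$-dimensional null set, so Lebesgue differentiation does not guarantee that $W$ becomes approximately constant on $E_j^{(n)}\times E_j^{(n)}$ as the cell shrinks around a generic on-diagonal point. I intend to bypass this with a dyadic-refinement / martingale argument, closely parallel to the proof of the related \refT{TD}: iteratively refine the pair of cells and extract a subsequence along which all three block-averages converge and the $L^2$ deviations of $W$ from the averages over the boxes concentrate to zero, using that the integrated identity already holds at each refinement level. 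This yields the more elegant and more general proof announced in \refR{RLD}, going beyond the multiaffine case treated in \cite[Theorem~5.5]{SJ234}.
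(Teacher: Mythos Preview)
Your handling of (ii) $\Leftrightarrow$ (iii) and (ii) $\Rightarrow$ (i) is fine. The substantive direction (i) $\Rightarrow$ (iii), however, diverges from the paper and contains a real gap.

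The paper's argument rests on a single external tool: Petrov's removal lemma, recorded here as \refL{LD0}. Given a graphon $W$ with $\phiw=a$ a.e., that lemma produces a version $\bW$ of $W$ for which $\Phi_{\bW}(x_1,\dots,x_m)=a$ holds for \emph{every} $(x_1,\dots,x_m)\in\oi^m$. One then fixes any $x,y\in\oi$ and, for each $A\subseteq[m]$, sets $x_i:=x$ for $i\in A$ and $x_i:=y$ otherwise; this yields \eqref{rang0ab} with $u=\bW(x,x)$, $v=\bW(y,y)$, $s=\bW(x,y)$. If (iii) failed, these three numbers would coincide for every choice of $x,y$, forcing $\bW$ (and hence $W$) to be constant. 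The diagonal issue you flag is precisely what the removal lemma dissolves: after modifying $W$ on a null set one may legitimately evaluate at repeated arguments.

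Your alternative---shrink cells $E_1,E_2$ until $W$ is nearly constant on each of $E_1\times E_1$, $E_2\times E_2$, $E_1\times E_2$---does not get past the obstacle you yourself identify. Dyadic or martingale refinement shows that the step approximation $W_n$ converges to $W$ in $L^1(\oi^2)$, which controls the $L^1$-oscillation of $W$ on a \emph{typical} dyadic box, ``typical'' being with respect to two-dimensional Lebesgue measure. But the diagonal boxes $I_j\times I_j$ together have measure $\sum_j\gl(I_j)^2\to0$, so this global bound places no constraint whatsoever on the normalised oscillations $\gl(I_j)^{-2}\int_{I_j\times I_j}|W-\bar w_{jj}|$; there is no reason a graphon should become approximately constant on small diagonal squares, and without that you cannot pass from $\phiw=a$ a.e.\ to $\Phi((w\xij)_{i<j})=a$ via uniform continuity of $\Phi$. (For multiaffine $\Phi$ the integral of $\phiw$ over a product box depends only on the block averages, which is essentially how \cite[Theorem~5.5]{SJ234} proceeds; for general continuous $\Phi$ that shortcut is unavailable.) You have also misread \refR{RLD}: the ``more elegant and more general'' proof it announces \emph{is} the one via Petrov's lemma, and the proof of \refT{TD} you invoke as a template likewise goes through the removal lemma (\refL{LD}), not through dyadic averaging.
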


\begin{theorem}
  \label{TD}
Suppose that $\Phi\bigpar{(w\xij)_{i<j}}$ is a 
continuous function of the
  $\binom m2$ variables $w\xij$, $1\le  i<j\le m$, for some $m\ge2$.
Then the following are equivalent.
  \begin{romenumerate}[-10pt]
\item\label{tdw}
There exists a graphon\/ $W$ 
and a function $h$ on $\oi$, with $h$ not \aex{} $0$,
such that	
\begin{equation}\label{rang1}
  \phiw(\xxm)=\sumim h(x_i)
\end{equation}
for \aex{} $\xxm\in\oi$, but\/ $W$ is \emph{not} \aex{} constant.

\item\label{td2}
There exists a $2$-type graphon\/ $W$ 
and a function  $h$ on $\oi$, with $h$ not \aex{} $0$, 
such that \eqref{rang1} holds 
for all $\xxm$, but\/ $W$ is
\emph{not} constant. 

\item\label{tduvs}
There exist numbers $u,v,s\in\oi$, not all equal, and $a,b\in\bbR$, not both
$0$, such
that for every subset $A\subseteq[m]$, if we choose
\begin{equation}\label{uvs}
  w\xij:=
  \begin{cases}
	u,&i,j\in A, \\
	v,&i,j\notin A, \\
	s,&i \in A,\,j\notin A \text{ or conversely},
  \end{cases}
\end{equation}
then 
\begin{equation}\label{rang1ab}
\Phi((w\xij)_{i<j})=a+b|A|.  
\end{equation}
  \end{romenumerate}
\end{theorem}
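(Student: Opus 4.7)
The plan is to prove (ii)$\Rightarrow$(i) (immediate), (ii)$\Leftrightarrow$(iii) (by direct computation on 2-type graphons), and (i)$\Rightarrow$(iii) (the main step, by extracting a 2-type structure from two generic points). For (ii)$\Leftrightarrow$(iii): a 2-type graphon $W$ with blocks $S_1, S_2$ and block-values $u, v, s$ as in \eqref{uvs} has $\phiw(\xxm) = \Phi((w\xij))$ depending on $\xxm$ only through the pattern $A := \set{i \in [m] : x_i \in S_1}$. Assuming (ii), the equation forces $\sum_{i=1}^m h(x_i)$ to depend only on $A$; fixing $m-1$ coordinates and varying the remaining one then forces $h$ to be \aex\ constant on each $S_k$, say equal to $h_k$. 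Writing $\sum_{i=1}^m h(x_i) = mh_2 + (h_1 - h_2)|A|$ yields (iii) with $a := mh_2$, $b := h_1 - h_2$, and ``$h$ not \aex\ $0$'' becomes ``$(a, b) \neq (0, 0)$''. Conversely, given (iii), I would take any partition $\oi = S_1 \cup S_2$ into positive-measure sets, let $W$ be the 2-type graphon with values $u, v, s$, and set $h := (a/m + b)\etta_{S_1} + (a/m)\etta_{S_2}$; then $\phiw = a + b|A| = \sum_{i=1}^m h(x_i)$ pointwise.

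For (i)$\Rightarrow$(iii), I would pick two generic points $y_1 \neq y_2 \in \oi$, set $u, v, s$ to be the 2D Lebesgue values of $W$ at $(y_1, y_1), (y_2, y_2), (y_1, y_2)$, and put $a := mh(y_2)$, $b := h(y_1) - h(y_2)$. For each $A \subseteq [m]$, let $t_A(i) := 1$ if $i \in A$ and $2$ otherwise, take disjoint intervals $U_k^\eps$ of length $\eps$ around $y_k$, and let $V_A^\eps := \prod_i U_{t_A(i)}^\eps$. Averaging the equation $\phiw = \sum_{i=1}^m h(x_i)$ (which holds \aex\ on $V_A^\eps$) yields
\[
  \frac{1}{\gl(V_A^\eps)} \int_{V_A^\eps} \phiw\,dx
  = \sum_{i=1}^m \frac{1}{\eps} \int_{U_{t_A(i)}^\eps} h.
\]
As $\eps \downto 0$, the right-hand side tends to $|A|h(y_1) + (m - |A|)h(y_2) = a + b|A|$ at Lebesgue points $y_1, y_2$ of $h$; for the left-hand side, the Lebesgue-point properties of $W$ and $W^2$ at each $(y_j, y_k)$ ensure that $(W(X_i, X_j))_{i<j}$ (with $X$ uniform on $V_A^\eps$) concentrates in $L^2$ at the deterministic vector $(w\xij(A))$, so by continuity and boundedness of $\Phi$ on $[0,1]^{\binom{m}{2}}$ the left-hand side tends to $\Phi((w\xij(A)))$. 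Equating yields (iii).

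Finally, $(u, v, s)$ not all equal for generic $(y_1, y_2)$ follows from $W$ not \aex\ constant: if $u = v = s$ held \aex, the slice $W(y_1, \cdot)$ would \aex\ equal $W(y_1, y_1)$ (and symmetrically for columns), forcing $W$ to be \aex\ constant. Similarly $(a, b) \neq (0, 0)$ for generic $(y_1, y_2)$ follows from $h$ not \aex\ $0$. A short case analysis then allows picking $(y_1, y_2)$ satisfying both non-trivialities together with the required Lebesgue and Fubini prerequisites. The main obstacle I anticipate is the 2D Lebesgue-point property at the diagonal points $(y_k, y_k)$: the diagonal has 2D Lebesgue measure zero and is not automatically covered by the standard differentiation theorem. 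I would handle this either by redefining $u, v$ via two distinct nearby points in each type (so all $W$-values become off-diagonal) or by a dedicated argument tailored to the diagonal.
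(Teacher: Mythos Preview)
Your treatment of \ref{td2}$\Leftrightarrow$\ref{tduvs} and \ref{td2}$\Rightarrow$\ref{tdw} is essentially the paper's, and is fine.

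For \ref{tdw}$\Rightarrow$\ref{tduvs}, however, the paper does \emph{not} use Lebesgue differentiation. It first invokes a removal lemma (\refL{LD}, based on Petrov's general removal lemma) to replace $W$ and $h$ by versions $\bW,\bh$ for which \eqref{rang1} holds \emph{for every} $(\xxm)\in\oi^m$, not just almost everywhere. With this everywhere version in hand, one may simply substitute $x_i\in\{x,y\}$ according to $A$ and read off $u=\bW(x,x)$, $v=\bW(y,y)$, $s=\bW(x,y)$, $a=m\bh(y)$, $b=\bh(x)-\bh(y)$. This completely sidesteps the diagonal obstacle you flag. Your Lebesgue-point approach is genuinely blocked there: the diagonal has planar measure zero, so $\tfrac{1}{\eps^2}\int_{U_k^\eps\times U_k^\eps}W$ need not converge, and your ``two distinct nearby points'' fix does not work either, since for $|A|\ge2$ you would need $W$ to take the \emph{same} value $u$ on all $\binom{|A|}{2}$ off-diagonal pairs among those nearby points, which nothing in your setup guarantees. (The alternative you mention, a ``dedicated argument tailored to the diagonal'', is precisely what the removal lemma supplies.)

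There is a second, less visible gap. Your ``short case analysis'' for choosing $(y_1,y_2)$ with \emph{both} $(u,v,s)$ not all equal \emph{and} $(a,b)\neq(0,0)$ is not short. In the paper this is done by contradiction: assuming \ref{tduvs} fails, one gets that for \emph{all} $x,y$ either $\bW(x,x)=\bW(y,y)=\bW(x,y)$ or $\bh(x)=\bh(y)=0$; using $A=\emptyset$ and $A=[m]$ one further shows that $\bW(x,x)=\bW(y,y)$ forces $\bh(x)=\bh(y)$, whence $\bh$ is constant; since $\bh\not\equiv0$ this constant is nonzero, which then forces $\bW$ constant, contradicting \ref{tdw}. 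Note that this argument again hinges on the diagonal values $\bW(x,x)$, so it is not available in your framework without first passing to an everywhere version.
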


\begin{remark}
In part \ref{td2} of Theorems \ref{TD0}--\ref{TD}, we may further require
that the two parts of 
$\oi$ are the intervals $[0,\frac12]$ and $(\frac12,1]$. 
Equivalently, we
may regard $W$ as a graphon defined on the two-point probability space
$(\setoi,\mu)$, with $\mu\set0=\mu\set1=\frac12$.
\end{remark}

\begin{remark}
  \refT{TD} holds also without the restrictions that $h$ is not \aex{} 0, and
  $a,b$ are not both 0; this follows by the same proof (with some
  simplifications).
Note that the excluded case, when $h=0$ \aex{}
  and $a=b=0$, is equivalent to \refT{TD0}.
For our purposes, it is
  essential that the case $a=b=0$ is excluded, since there are such
  examples that have to be excluded from our arguments,
for example the bipartite example in
\refR{RK3bi}, which corresponds to the case $u=v=0$, $s=1$ and
$\Phi((w\xij)_{i<j})=0$ for any $A$.
\end{remark}

The proofs follow the proof of \citet[Theorem 5.5]{SJ234}, with some
modifications. We prove the more complicated \refT{TD} in detail first, and
then sketch the similar but simpler proof of \refT{TD0}.

\begin{proof}[Proof of \refT{TD}]

\ref{td2}$\implies$\ref{tdw}: Trivial.

\ref{tduvs}$\implies$\ref{td2}: 
Define a 2-type graphon $W$ by
\begin{equation}\label{uvsW}
  W(x,y):=
  \begin{cases}
	u,& x,y > \frac12, \\
	v,& x,y \le \frac12, \\
	s,& x\le \frac12 <y \text{ or conversely},
  \end{cases}
\end{equation}
and let the function $h$ be
\begin{equation}\label{hex}
  h(x):=
  \begin{cases}
	a/m, & x\le \frac12,\\
a/m+b, & x>\frac12.
  \end{cases}
\end{equation}
Then 
\begin{equation}\label{vint}
\phiw(\xxm)=\Phi((w\xij)_{i<j})  
\end{equation}
where $w\xij$ is given by \eqref{uvs}
with $A:=\set{i:x_i>\frac12}$, and \eqref{rang1} follows from \eqref{rang1ab}.

\ref{tdw}$\implies$\ref{tduvs}: 
Suppose that $W$ is a graphon as in \ref{tdw},
but that \ref{tduvs} does not hold; we will show that this leads to a
contradiction. 
We first use \refL{LD} below, which (by replacing $W$ by $\bW$ and $h$ by
$\bh$) shows that we may assume that 
\eqref{rang1} holds for \emph{all}  $\xxm\in\oi$.

Suppose that $x,y\in \oi$. Given $A\subseteq[m]$,
let $x_i:=x$ for $i\in A$ and $x_i:= y$ for
$i\notin A$. Then $W(x_i,x_j)=w\xij$ as given by
\eqref{uvs} with $u=W(x,x)$, $v=W(y,y)$, $s=W(x,y)$.
Furthermore, \eqref{rang1} holds by our assumption, and thus
\begin{equation}\label{tom}
  \begin{split}
\Phi((w\xij)_{i<j})&=\phiw(\xxm)=\sumim h(x_i)= |A| h(x)+(m-|A|) h(y) 
\\ 
&= a+b|A| 	
  \end{split}
\raisetag{\baselineskip}
\end{equation}
with $a=mh(y)$ and $b=h(x)-h(y)$. Hence, \eqref{rang1ab} holds.
Since \ref{tduvs} does not hold, we must have either $u=v=s$ or
$a=b=0$. Note that $a=b=0$ if and only if $h(x)=h(y)=0$. 
Consequently, we have shown the following 
property:
\begin{equation}
  \label{md}
\text{If  $x,y\in \oi$, then } W(x,x)=W(y,y)=W(x,y)
\text{ or } h(x)=h(y)=0. 
\end{equation}

Furthermore, if $W(x,x)=W(y,y)$ then \eqref{tom}, with $A=\emptyset$ and
$A=[m]$, implies that
\begin{equation}
  a=\phiw(y,\dots,y)=\phiw(x,\dots,x)=a+mb
\end{equation}
and thus $b=0$ so $h(x)=h(y)$. Consequently, \eqref{md} implies that
\begin{equation}
  \label{hatt}
x,y\in \oi \implies h(x)=h(y).
\end{equation}
In other words, $h(x)=\gam$ for some constant $\gam\in\bbR$.

Note that $\gam\neq0$, since otherwise $h(x)$ would be 0 for all $x$,
contrary to the assumption \ref{tdw}. 
Hence, $h(x)\neq0$ for all $x$ and \eqref{md} implies
\begin{equation}
  \label{fd}
x,y\in\oi \implies W(x,x)=W(y,y)=W(x,y).
\end{equation}
Thus $W$ is  constant, contradicting the
assumption. 

This contradiction shows that \ref{tduvs} holds.
\end{proof}

\begin{proof}[Proof of \refT{TD0}]
  We argue as in the proof of \refT{TD}, with $b=0$ and $h(x)=a/m$;
in the proof of \ref{td0w}$\implies$\ref{td0uvs} we use \refL{LD0} instead
of \refL{LD}, and note directly that \eqref{tom} with $b=0$, which is
\eqref{rang0ab}, implies $u=v=s$ since \ref{tduvs} is assumed not to hold.
\end{proof}

\begin{remark}
 In both proofs, the proof of \ref{tduvs}$\implies$\ref{td2} also
works in the opposite direction and thus shows \ref{tduvs}$\iff$\ref{td2}
directly; \ref{tduvs} is just an explicit version of what \ref{td2} means.
\end{remark}

The proofs used the following technical lemmas, which both are
consequences of
a recent powerful general removal lemma by \citet{Petrov}.
Recall that a graphon $\bW$ is a version of $W$ if $W=\bW$ a.e.

\begin{lemma}\label{LD0}
  Suppose that $\Phi\bigpar{(w\xij)_{i<j}}$ is a continuous function of the
  $\binom m2$ variables $w\xij\in\oi$, $1\le  i<j\le m$, for some $m\ge2$.
Suppose further that\/ $W:\oi^2\to\oi$ is a graphon,
  \ie, a symmetric measurable function, and 
suppose that 
\begin{equation} \label{ld0}
 \phiw(\xxm)=a
\end{equation}
for some $a\in\bbR$ and \aex{} $\xxm\in\oi$.
Then there is a version\/ $\bW$ of\/ $W$
such that 
\begin{equation}\label{ld01}
\Phi_{\bW}(x_1,\dots,x_m)=a
\end{equation} 
for all
  $x_1,\dots,x_m\in\oi$.
\end{lemma}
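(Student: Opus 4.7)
My plan is to deduce the lemma from the removal lemma of \citet{Petrov}, which in the relevant form asserts: if $E \subset \oi^m$ is measurable with $\gl(E) = 0$, then there exists a null set $N \subset \oi$ such that $E \cap (\oi \setminus N)^m = \emptyset$.

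I would apply this to
\begin{equation*}
E := \{(x_1,\dots,x_m) \in \oi^m : \phiw(x_1,\dots,x_m) \neq a\},
\end{equation*}
which is null by hypothesis, to produce a null set $N \subset \oi$ with the property that $\phiw(x_1,\dots,x_m) = a$ for every $(x_1,\dots,x_m) \in (\oi \setminus N)^m$.

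Having $N$ in hand, I would construct $\bW$ by \emph{collapsing} $N$ to a single good point. Fix any $x_0 \in \oi \setminus N$ (which exists because $N$ is null), define $\pi : \oi \to \oi$ by $\pi(x) := x_0$ for $x \in N$ and $\pi(x) := x$ for $x \in \oi \setminus N$, and set
\begin{equation*}
\bW(x,y) := W(\pi(x),\pi(y)).
\end{equation*}
Since $\pi$ is measurable and $W$ is a graphon, $\bW$ is a measurable, symmetric, $\oi$-valued function, hence a graphon. Because $\pi$ is the identity off $N$ and $(N \times \oi) \cup (\oi \times N)$ has two-dimensional Lebesgue measure zero, $\bW = W$ almost everywhere on $\oi^2$, so $\bW$ is a version of $W$. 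Finally, for an arbitrary $(x_1,\dots,x_m) \in \oi^m$,
\begin{equation*}
\Phi_{\bW}(x_1,\dots,x_m) = \phiw(\pi(x_1),\dots,\pi(x_m)) = a,
\end{equation*}
where the second equality holds because each $\pi(x_i)$ lies in $\oi \setminus N$; this is precisely \eqref{ld01}.

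The only nontrivial ingredient in this plan is Petrov's removal lemma; the collapse construction, once $N$ is available, is a short and essentially formal step, and, perhaps somewhat surprisingly, the continuity of $\Phi$ is not needed for the argument (it is presumably stated in the hypothesis for uniformity with the companion \refL{LD}).
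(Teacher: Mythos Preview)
The form of Petrov's removal lemma you invoke is false. A null set $E \subset \oi^m$ need not satisfy $E \cap (\oi \setminus N)^m = \emptyset$ for any null $N \subset \oi$, even when $E$ arises as $\{\phiw \neq a\}$. Take $m=2$, $\Phi(w_{12}) = w_{12}$, $a = \tfrac12$, and let $W(x,y) = \tfrac12$ off the anti-diagonal $\{x+y=1\}$ and $W(x,y)=0$ on it. Then $E = \{(x,y):x+y=1\}$ is null; but if a null $N$ gave $E \cap (\oi\setminus N)^2 = \emptyset$, then every $x \in (0,1)\setminus N$ would force $1-x \in N$, hence $(0,1)\setminus N \subset 1-N$ and $(0,1) \subset N \cup (1-N)$, contradicting that $N\cup(1-N)$ is null.

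Petrov's theorem, as the paper uses it, does not produce a one-dimensional null set at all: it says that if $M \subset \oi^{\binom m2}$ is \emph{closed} and $(W(x_i,x_j))_{i<j} \in M$ for a.e.\ $\xxm$, then there is a version $\bar W$ of $W$ with $(\bar W(x_i,x_j))_{i<j} \in M$ for \emph{all} $\xxm$. Applied with $M = \Phi\qw(a)$ this yields \eqref{ld01} in one stroke. Note that the closedness of $M$ is exactly where the continuity of $\Phi$ is used, so your closing remark that continuity is dispensable is also incorrect. Your collapse construction through $\pi$ is tidy and would indeed finish the argument if such an $N$ existed --- but the lemma you need for that step is not available, and Petrov's result bypasses it by handing you $\bar W$ directly.
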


\begin{proof}
  This is a direct application of 
\cite[Theorem 1(2)]{Petrov}, see \cite[Example 1]{Petrov}.
We let $M:=\Phi\qw(a) \subseteq \oi^{\binom m2}$
and note that \eqref{ld0} can be written
$\bigpar{W(x_i,x_j)}_{i<j}\in M$ for
\aex{} $\xxm$.
By \cite[Theorem 1(2)]{Petrov}, there exists a version $\bW$ of $W$ such
that $\bigpar{\bW(x_i,x_j)}_{i<j}\in M$ for
all $\xxm$, which is \eqref{ld01}.
(Petrov's theorem is stated for an infinite
sequence $x_1,x_2,\dots$, for maximal generality, but we can always ignore
all but any given finite number of the variables.)
\end{proof}

\begin{lemma}\label{LD}
  Suppose that $\Phi\bigpar{(w\xij)_{i<j}}$ is a continuous function of the
  $\binom m2$ variables $w\xij\in\oi$, $1\le  i<j\le m$, for some $m\ge2$.
Suppose further that\/ $W:\oi^2\to\oi$ is a graphon,
  \ie, a symmetric measurable function, and 
suppose that 
\begin{equation} \label{ld}
 \phiw(\xxm)=\sumim h(x_i)
\end{equation}
for some $h:\oi\to\bbR$ and \aex{} $\xxm\in\oi$.
Then there is a version\/ $\bW$ of\/ $W$
and a measurable function $\bh:\oi\to\bbR$ 
such that 
\begin{equation}\label{ld1}
\Phi_{\bW}(x_1,\dots,x_m)=\sumim \bh(x_i)
\end{equation} 
for all
  $x_1,\dots,x_m\in\oi$.
\end{lemma}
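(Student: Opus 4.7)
My plan is to reduce Lemma \ref{LD} to Lemma \ref{LD0} by eliminating $h$ in favor of a finite system of algebraic identities in $\Phi_W$ alone, and then invoking Lemma \ref{LD0} on an enlarged tuple of $2m$ variables. The algebraic fact I would first establish is that a function $F : \oi^m \to \bbR$ has the form $F(x_1,\dots,x_m) = \sumim h(x_i)$ for some $h : \oi \to \bbR$ if and only if $F$ is symmetric in its $m$ arguments and
\begin{equation*}
F(x_1,\dots,x_m) - F(y_1,\dots,y_m) = \sumim \bigl[F(y_1,\dots,y_{i-1},x_i,y_{i+1},\dots,y_m) - F(y_1,\dots,y_m)\bigr]
\end{equation*}
for all $(x_1,\dots,x_m,y_1,\dots,y_m) \in \oi^{2m}$. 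The ``only if'' direction is a direct computation. For the converse, fixing a reference tuple $(y_1^0,\dots,y_m^0)$ and setting $h_i(x) := F(y_1^0,\dots,y_{i-1}^0,x,y_{i+1}^0,\dots,y_m^0) - F(y_1^0,\dots,y_m^0)$, the identity gives $F(x_1,\dots,x_m) = \sumim h_i(x_i) + C$ with $C := F(y_1^0,\dots,y_m^0)$; symmetry of $F$ then forces $h_i - h_j$ to be constant for every $i,j$, from which a single $\bh$ is extracted by absorbing the constants.

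Applied to $F = \Phi_W$ under the hypothesis \eqref{ld}, Fubini shows that both the symmetry and the separation identities above hold for \aex{} $(x_1,\dots,x_m,y_1,\dots,y_m) \in \oi^{2m}$: each occurrence of $\Phi_W$ in these identities depends on the $2m$ variables only through a coordinate projection onto $\oi^m$, and null sets in $\oi^m$ pull back to null sets in $\oi^{2m}$ under such projections. Writing $z_k := x_k$ for $1 \le k \le m$ and $z_k := y_{k-m}$ for $m < k \le 2m$, both identities become continuous (indeed polynomial) functions of the edge weights $\bigpar{W(z_i,z_j)}_{1 \le i < j \le 2m}$; combining them into a single continuous $\Psi : \oi^{\binom{2m}{2}} \to \bbR$ vanishing precisely when both hold (for instance as a sum of squares), I would invoke Lemma \ref{LD0} with $2m$ in place of $m$, $\Psi$ in place of $\Phi$, and $a = 0$ to produce a version $\bW$ of $W$ for which this combined identity holds pointwise for every $(z_1,\dots,z_{2m}) \in \oi^{2m}$.

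Once $\Phi_{\bW}$ satisfies the equivalent conditions of the first paragraph identically on $\oi^m$, the ``if'' direction applied pointwise yields a measurable $\bh : \oi \to \bbR$ (an explicit affine combination of values of $\Phi_{\bW}$ at a fixed reference tuple) with $\Phi_{\bW}(x_1,\dots,x_m) = \sumim \bh(x_i)$ for every $(x_1,\dots,x_m) \in \oi^m$, as required. The main obstacle is the algebraic characterization in the first paragraph: identifying a finite list of identities in $F$ alone (with $h$ eliminated) equivalent to $F$ being of the form $\sumim h(x_i)$, and expressing each identity in terms of the values $W(z_i,z_j)$ at pairs so that Lemma \ref{LD0} can be applied. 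The use of $2m$ rather than $m$ variables in the separation identity is essential: it allows the auxiliary tuple $(y_1,\dots,y_m)$ to range freely over $\oi^m$ so that Fubini propagates the \aex{} hypothesis, whereas fixing the reference tuple at specific points would require evaluating $\Phi_W$ at a measure-zero configuration not controlled by \eqref{ld}.
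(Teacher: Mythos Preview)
Your approach is correct and essentially the same as the paper's: encode the separation identity as a continuous function of the $\binom{2m}{2}$ edge weights $W(z_i,z_j)$ and apply Petrov's removal lemma (which you invoke via \refL{LD0}) to upgrade the a.e.\ identity to an everywhere identity for a version $\bW$, then specialize the auxiliary variables to recover $\bh$. The one simplification in the paper is that it writes the separation identity with each $x_\ell$ placed in the \emph{first} slot, namely $\Phi_W(x_\ell,y_1,\dots,\widehat{y_\ell},\dots,y_m)-\Phi_W(y_1,\dots,y_m)$, so that after fixing $y_1=\dots=y_m=0$ one reads off a single $\bh(x)=\Phi_{\bW}(x,0,\dots,0)-\tfrac{m-1}{m}\Phi_{\bW}(0,\dots,0)$ directly, making your separate symmetry condition unnecessary.
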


\begin{proof}
We translate \eqref{ld} into the setting of \cite{Petrov} as follows.

By \eqref{ld}, for \aex{} $x_1,\dots,x_m,y_1,\dots,y_m\in\oi$,
\begin{multline}
  \label{xy}
\phiw(\xxm)-\phiw(\yym)\\
=\sum_{\ell=1}^m
\bigpar{\phiw(x_\ell,y_1,\dots,\widehat{y_{\ell}},\dots,y_m)-
\phiw(y_1,\dots,\dots,y_m)},
\end{multline}
where $\widehat{y_{\ell}}$ means that this variable is omitted.
Let $x_{m+i}:=y_i$ ($1\le i\le m$) 
and $w_{ij}:=W(x_i,x_j)$ ($1\le i,j\le 2m$).
Then \eqref{xy} can be written as 
\begin{equation}\label{xyw}
  \Phix\Bigpar{(w_{ij})_{i\neq j}} =0
\end{equation}
for some continuous function $\Phix:\oi^{2m(2m-1)}\to\bbR$.
Let 
\begin{equation}
  M:=\Phix\qw(0)\subseteq\oi^{2m(2m-1)}.
\end{equation}
Since $\Phix$ is continuous, $M$ is a closed subset, and
by \eqref{xyw}, 
\begin{equation}
\bigpar{W(x_i,x_j)}_{i\neq j}\in M
\end{equation}
for \aex{} $x_1,\dots, x_{2m}$.
By \cite[Theorem 1(2)]{Petrov}, there exists a version $\bW$ of $W$
such that
\begin{equation}
\bigpar{\bW(x_i,x_j)}_{i\neq j}\in M
\end{equation}
for all $x_1,\dots, x_{2m}$. 
This means  that 
$\Phix\bigpar{(\bW(x_i,x_j))_{i\neq j}}=0$ for all $x_1,\dots,x_{2m}$, and thus
the analogue of \eqref{xy} for $\bW$ holds for all $\xxm,\yym$.
Now choose $y_1=\dots=y_m=0$. Then this analogue of \eqref{xy} yields
\eqref{ld1} with 
$\bh(x)= \Phi_{\bW}(x,0,\dots,0)-\frac{m-1}{m}\Phi_{\bW}(0,\dots,0)$.
\end{proof}

\begin{remark}\label{RLD}
\refL{LD0}, which follows from
Petrov's removal lemma \cite{Petrov},
is a simpler, stronger and more general version of 
\citet[Lemma 5.3]{SJ234}. 
Similarly, 
a modification of the proof of 
\cite[Lemma 5.3]{SJ234} can be used to prove
a weaker version of \refL{LD};
however, Petrov's  removal lemma 
enables us to a simpler and stronger statement with a simpler proof.
\end{remark}

\section{An algebraic condition}\label{Salg}

It is now easy to prove \refT{TA}. 

\label{Spf2}
\begin{proof}[Proof of \refT{TA}] 
\pfitemref{ta1}
  Suppose, in order to get a contradiction, that the property
  $\ctP(F;\gamm)$ is not   \pqr. By \refL{Lequiv}, also 
  $\ctpw(F;\gamm)$ is not   \pqr. 
That means that there exists a graphon $W$ that is not \aex{} equal to $p$
such that   $\ctpw(F;\gamm)$ holds, and thus by \refL{LW}\ref{lw1},
\begin{equation}\label{nyk}
  \tpsifw(\xxm)=p^{e(F)}\quad\text{a.e.}
\end{equation}
If $W$ \aex{} equals a constant, $w$ say, then $\tpsifw=w^{e(F)}$ a.e., and
thus $w^{e(F)}=p^{e(F)}$ and $w=p$, so $W=p$ \aex{} which we have
excluded. Hence, $W$ is not \aex{} constant.

Note that $\tpsifw(\xxm)$ by \eqref{psifw}--\eqref{symm} is a polynomial in
$W(x_i,x_j)$, $1\le i<j\le m$, 
and thus by \eqref{phiw} can be written as $\phiw(\xxm)$ for a 
suitable polynomial $\Phi$. 
We apply \refT{TD0}, with $a=p^{e(F)}$.
By \eqref{nyk}, \refT{TD0}\ref{td0w} holds, and thus \refT{TD0}\ref{td0uvs}
holds. 
Let $u,v,s$ be as there, and define $w\xij$ by \eqref{uvs0}.

Choosing $A=[m]$, we have $w\xij=u$ for all $i$ and $j$, and 
it is easily seen that $\Phi\bigpar{(w\xij)_{i<j}}=u\eef$
(see also \refL{Leipz} below); hence
\eqref{rang0ab} yields $u=p$. Similarly, the case $A=\emptyset$ yields
$v=p$.
Finally, take $A=\set1$, and regard $\Phi\bigpar{(w\xij)_{i<j}}$ as a polynomial
in $s$. Since $u,v>0$ and $e(F)>0$, this polynomial has non-negative
coefficients and at least one non-zero term with a positive power of $s$;
hence the polynomial is strictly increasing in $s>0$, so 
\eqref{rang0ab} has at most one root $s$. 
However, when $u=v=p$,
\eqref{rang0ab} is satisfied by $s=p$, and thus this is the
only root. 
Consequently, $u=v=s=p$, a contradiction, which completes the proof.

\pfitemref{ta2}
Similar, using Lemmas \ref{Lequiv} and \ref{LW3} and \refT{TD0}.
\end{proof}

\begin{remark}\label{RP3b}
  Suppose that the graph $F$ contains two vertices that are \emph{twins},
  i.e., such that the map interchanging these vertices and fixing all others
  is an automorphism. Label $F$ such that the twins are vertices $m-1$ and
  $m$.
The argument in the proof of \refT{TA} shows, using \refR{RL3++}, that
$\cPl(F;\gamm)$ is \qr{} provided $\ga_{m-1}\neq\ga_m$. (We do not know
whether this extends to $\ga_m=\ga_{m-1}$. Cf.\ \refP{P4=1}.)
In particular, this applies to $F=P_3$, see \refE{EP3} and \refP{PP3}.
\end{remark}

For \refT{T=}, the algebra is more complicated, and we analyse the
condition \eqref{rang1ab} as follows.

For a subset $A$ of $V(F)$, let $e_F(A)$ be the number of edges in $F$ with
both endpoints in $A$; similarly, if $A$ and $B$ are disjoint subsets of
$V(F)$, let $e_F(A,B)$ be the number of edges with one endpoint in $A$ and
the other in $B$. 
Further, let $A\comp:=V(F)\setminus A$ be the complement of $A$. 

\begin{lemma}\label{Leipz}
 Suppose that $F$ is a graph with\/ $|F|=m$ and let\/ $W$ be the $2$-type
graphon given by \eqref{uvsW} for some $u,v,s\in\oi$. 
Let $\xxm\in\oi$ and let $k:=|\set{i\le m:x_i>1/2}|$. Then
\begin{equation}
\tpsifw(\xxm)=
\binom mk\qw 
\sum_{A\subseteq V(F):|A|=k} u^{e_F(A)}v^{e_F(A\comp)}s^{e_F(A,A\comp)}.
\end{equation}
\end{lemma}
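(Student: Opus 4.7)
The plan is to unfold the definitions of $\tpsifw$ and the $2$-type graphon $W$ and then reorganize the symmetrizing sum by grouping permutations according to the subset they induce. First I would set $S:=\set{i\le m:x_i>1/2}$, so that $|S|=k$, and observe from \eqref{uvsW} that for any $i,j$,
\begin{equation*}
W(x_i,x_j)=
\begin{cases}
u, & i,j\in S,\\
v, & i,j\notin S,\\
s, & \text{otherwise.}
\end{cases}
\end{equation*}
Given a permutation $\gs\in\fS_m$, define $A_\gs:=\gs\qw(S)$, so $|A_\gs|=k$; then for an edge $ij\in E(F)$, $W(x_{\gs(i)},x_{\gs(j)})$ equals $u$, $v$, or $s$ according to whether both, neither, or exactly one of $i,j$ lies in $A_\gs$. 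Multiplying over $ij\in E(F)$ via \eqref{psifw} gives
\begin{equation*}
\psifw(\xxgsm)=u^{e_F(A_\gs)}v^{e_F(A_\gs\comp)}s^{e_F(A_\gs,A_\gs\comp)}.
\end{equation*}

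Next I would substitute into the defining formula \eqref{tN} (or rather its graphon analogue \eqref{symm} applied to $\psifw$) and group permutations by the set $A_\gs$. For each fixed $A\subseteq V(F)$ with $|A|=k$, the permutations $\gs$ with $\gs\qw(S)=A$ are exactly those whose restriction $A\to S$ is a bijection and whose restriction $A\comp\to S\comp$ is a bijection; there are $k!(m-k)!$ such $\gs$, and the summand depends only on $A$. Hence
\begin{equation*}
\tpsifw(\xxm)=\frac{1}{m!}\sum_{A\subseteq V(F),\,|A|=k}k!(m-k)!\,u^{e_F(A)}v^{e_F(A\comp)}s^{e_F(A,A\comp)},
\end{equation*}
and $k!(m-k)!/m!=\binom{m}{k}\qw$ yields the claimed formula. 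There is no real obstacle here: the lemma is a direct bookkeeping computation, and the only point that requires care is the counting of permutations inducing a given subset $A_\gs$, which is a standard double-coset count.
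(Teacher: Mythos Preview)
Your proof is correct and follows essentially the same approach as the paper. The paper's proof is simply a terser version of yours: it computes $\psifw(\xxm)=u^{e_F(A)}v^{e_F(A\comp)}s^{e_F(A,A\comp)}$ for $A=\set{i:x_i>1/2}$ and then remarks that averaging over permutations of the $x_i$ amounts to averaging over the $\binom{m}{k}$ subsets $A$ of size $k$, which is precisely the counting $k!(m-k)!/m!=\binom{m}{k}\qw$ that you spell out explicitly.
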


\begin{proof}
  Let $A:=\set{i\le m:x_i>1/2}$. 
Then by \eqref{psifw} and \eqref{uvsW},
\begin{equation}
\psifw(\xxm)=  u^{e_F(A)}v^{e_F(A\comp)}s^{e_F(A,A\comp)}. 
\end{equation}
By \eqref{symm}, $\tpsifw(\xxm)$ is the average of this over all
permutations of $\xxm$, which means taking the average over the $\binom mk$
sets $A\subseteq[m]$ with $|A|=k$.
\end{proof}

\begin{lemma}\label{Lalg}
Suppose that $F$ is a graph with 
$|F|=m$.
Then the following are equivalent.
\begin{romenumerate}[-10pt]
\item \label{lalg0}
For some $p\in(0,1]$,
$\ctP(F;\mmm)$ is \emph{not} \pqr.
\item \label{lalgw}
For some $p\in(0,1]$,
$\ctpw(F;\mmm)$ is \emph{not} \pqr.
\item \label{lalguvs}
There exist numbers $u,v,s\ge0$, not all equal, 
and some real $a$ and $b$, not both $0$, such that
\begin{equation}\label{alg}
  \sum_{A\subseteq V(F):|A|=k} u^{e_F(A)}v^{e_F(A\comp)}s^{e_F(A,A\comp)}
=\binom mk \xpar{a+bk},
\qquad k=0,\dots,m.
\end{equation}
\item \label{lalgq}
There exist numbers $u,v,s\ge0$, not all equal, 
such that the polynomial \uppar{in $q$}
\begin{equation}\label{algq}
\gluvs(q):=
  \sum_{A\subseteq V(F)} 
u^{e_F(A)}v^{e_F(A\comp)}s^{e_F(A,A\comp)}
q^{|A|}(1-q)^{m-|A|}
\end{equation}
has degree at most $1$, but does not vanish identically.
\item \label{lalgx} 
There exist numbers $u,v,s\ge0$, not all equal,
such that the polynomial \uppar{in $x$}
\begin{equation}\label{algx}
\glxuvs(x):=
  \sum_{A\subseteq V(F)} 
u^{e_F(A)}v^{e_F(A\comp)}s^{e_F(A,A\comp)}
(x-1)^{|A|}
\end{equation}
is divisible by $x^{m-1}$, but does not vanish identically.
\end{romenumerate}
\end{lemma}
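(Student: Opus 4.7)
The plan is to chain the five equivalences together, mostly by applying results already established and then handling the purely algebraic transition at the end. The equivalence \ref{lalg0}$\iff$\ref{lalgw} is immediate from \refL{Lequiv}. For \ref{lalgw}$\iff$\ref{lalguvs}, I will first observe that by \eqref{psifw} and \eqref{symm} the symmetrized density $\tpsifw$ is a symmetric polynomial in the variables $W(x_i,x_j)$, hence can be written as $\Phi_W$ for a continuous (in fact polynomial) $\Phi$. By \refL{LW}\ref{lw2}, $\ctpw(F;\mmm)$ holds iff $\Phi_W=\sum_i h(x_i)$ a.e.\ for some integrable $h$ with $\intoi h=p^{e(F)}/m$; observing that the only a.e.\ constant $W$ satisfying this is $W=p$ (since $p>0$ forces $w^{e(F)}=p^{e(F)}\Rightarrow w=p$, and the constant $h$ must equal $p^{e(F)}/m$), non-\pqr{}ness is equivalent to the existence of a non-constant $W$ and $h$ not a.e.\ $0$ with $\Phi_W=\sum h(x_i)$. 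Applying \refT{TD}\ref{tdw}$\iff$\ref{tduvs} translates this to the existence of $u,v,s\in\oi$ not all equal and $a,b$ not both zero such that \eqref{rang1ab} holds. Evaluating $\Phi((w\xij)_{i<j})$ for the $w\xij$ given by \eqref{uvs} via \refL{Leipz} with $k:=|A|$ yields
\begin{equation*}
\binom mk\qw\sum_{A\subseteq V(F),\,|A|=k} u^{e_F(A)}v^{e_F(A\comp)}s^{e_F(A,A\comp)} = a+bk,
\end{equation*}
which is \eqref{alg}. (One must also check that the nonnegativity $u,v,s\ge0$ is the relevant range; since graphons take values in $\oi$, $u,v,s\in\oi$, but once $u,v,s\ge0$ is enforced the conclusion of the theorem we are heading towards is the same.)

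The equivalence \ref{lalguvs}$\iff$\ref{lalgq} is a direct consequence of the Bernstein basis identity
\begin{equation*}
\sum_{k=0}^{m}\binom mk(a+bk)\,q^k(1-q)^{m-k} = a+bmq,
\end{equation*}
which follows from $\sum_k\binom mk q^k(1-q)^{m-k}=1$ and $\sum_k k\binom mk q^k(1-q)^{m-k}=mq$. Since $\set{q^k(1-q)^{m-k}:0\le k\le m}$ is a basis of the space of polynomials of degree $\le m$, writing the coefficients of $\gluvs(q)$ as $c_k:=\sum_{|A|=k}u^{e_F(A)}v^{e_F(A\comp)}s^{e_F(A,A\comp)}$ we see that $\gluvs$ has degree $\le 1$ iff $c_k=\binom mk(a+bk)$ for some $a,b$, in which case $\gluvs(q)=a+bmq$. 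The conditions $(a,b)\neq(0,0)$ and $\gluvs\not\equiv 0$ match.

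For \ref{lalgq}$\iff$\ref{lalgx} I will use the substitution that turns the Bernstein basis into a monomial basis. Setting $x:=1/(1-q)$ (so $q=(x-1)/x$ and $1-q=1/x$) gives
\begin{equation*}
\glxuvs(x) = x^m\,\gluvs\!\left(\tfrac{x-1}{x}\right),
\end{equation*}
which is an invertible change between polynomials in $q$ of degree $\le m$ and polynomials in $x$ of degree $\le m$; in particular it sends the zero polynomial to the zero polynomial. Under this substitution, $\gluvs(q)=a+bmq$ of degree $\le 1$ becomes $\glxuvs(x)=x^{m-1}\bigl((a+bm)x-bm\bigr)$, which is divisible by $x^{m-1}$; conversely, if $\glxuvs(x)=x^{m-1}(\alpha x+\beta)$ then $\gluvs(q)=(\alpha+\beta)-\beta q$, linear in $q$. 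This completes the cycle \ref{lalg0}$\Rightarrow$\ref{lalgw}$\Rightarrow$\ref{lalguvs}$\Rightarrow$\ref{lalgq}$\Rightarrow$\ref{lalgx} and back.

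The main obstacle here is not algebraic but conceptual: making sure that the passage from the graphon condition $\ctpw(F;\mmm)$ being non-\pqr{} to the exact algebraic equations \eqref{alg} is faithful, \ie{} that the $h$ produced by \refL{LW}\ref{lw2} is indeed not a.e.\ zero (handled above using $p>0$) and that the constants $a,b$ extracted from \refT{TD}\ref{tduvs} correspond via \refL{Leipz} to the same $a,b$ appearing in \eqref{alg}. Once this bookkeeping is in place, the remaining equivalences \ref{lalguvs}$\iff$\ref{lalgq}$\iff$\ref{lalgx} are essentially reformulations through standard identities in the Bernstein basis and a projective change of variable.
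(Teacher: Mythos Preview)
Your plan is correct and follows essentially the same route as the paper: \ref{lalg0}$\iff$\ref{lalgw} via \refL{Lequiv}; \ref{lalgw}$\iff$\ref{lalguvs} via \refL{LW}\ref{lw2}, \refT{TD}, and \refL{Leipz}; and the algebraic equivalences \ref{lalguvs}$\iff$\ref{lalgq}$\iff$\ref{lalgx} via Bernstein-basis identities and a projective substitution. The only place you are lighter than the paper is the direction \ref{lalguvs}$\Rightarrow$\ref{lalgw}: after scaling $u,v,s$ into $\oi$ and building the $2$-type $W$ and the step function $h$, you still need to manufacture a $p\in(0,1]$ with $m\intoi h=p^{e(F)}$, which the paper does by observing $\tpsifw\in\oi$ forces $\intoi h\in(0,1/m]$; your substitution $x=1/(1-q)$ for \ref{lalgq}$\iff$\ref{lalgx} is a harmless variant of the paper's $q=1/x$ with $u\leftrightarrow v$.
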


Note that (for $q\in\oi$)
$\gluvs(q)$ is the expectation of
$u^{e_F(A)}v^{e_F(A\comp)}s^{e_F(A,A\comp)}$ if $A$ is the random subset 
$[m]_q$ of
$[m]$ obtain by including each element with probability $q$, independently
of each other.

\begin{proof}
  \ref{lalg0}$\iff$\ref{lalgw}:
This is contained in \refL{Lequiv}.

\ref{lalgw}$\implies$\ref{lalguvs}:
If \ref{lalgw} holds, then there exists a graphon $W$ that is not \aex{}
constant for which $\ctpw(F;\mmm)$ holds. Then, by \refL{LW}\ref{lw2}, there
exists an integrable function $h$ with $\intoi h\neq 0$ such that 
\eqref{elw2} holds. 

As in the proof of \refT{TA}, 
$\tpsifw(\xxm)$ 
can be written as $\phiw(\xxm)$ for a 
polynomial $\Phi$. 
Then \eqref{elw2} is the same as \eqref{rang1} and \refT{TD}\ref{tdw} holds.
By \refT{TD} (and its proof) we may assume that $W$ is a 2-type graphon
given by \eqref{uvsW} for some $u,v,s\in\oi$, and then \refL{Leipz} and
\eqref{vint} show 
that
\eqref{rang1ab} is equivalent to \eqref{alg}, and thus \ref{lalguvs} follows.

\ref{lalguvs}$\implies$\ref{lalgw}:
This is similar but simpler.
We may assume that $u,v,s\in\oi$, by multiplying them by a small positive
number if necessary. Let $W$ be the graphon defined by \eqref{uvsW}. Then
\refL{Leipz} and \eqref{alg} yield $\tpsifw(\xxm)=a+bk$ where
$k=|\set{i:x_i>1/2}$, so \eqref{elw2} holds with $h$ given by \eqref{hex}. 

We have assumed that $a$ and $b$ are not both 0, and thus  $h(x)$ is not
identically 0. Furthermore, \eqref{elw2} implies $h(x)\ge0$ a.e., and thus
$\intoi h>0$. Since $\tpsifw\le1$, \eqref{elw2} also implies $\intoi h\le
1/m$. Hence there exists $p\in(0,1]$ with $p\eef=m\intoi h$.
(Also in the trivial case $e(F)=0$, since then $\tpsifw=1$.) 
\refL{LW} now
shows that $\ctpw(F;\mmm)$ holds, and since $W$ is not \aex{} constant, this
yields \ref{lalgw}.

\ref{lalguvs}$\iff$\ref{lalgq}:
By multiplying \eqref{alg} by $t^k$ and summing over $k$, we see that
\eqref{alg} is equivalent to
\begin{equation}\label{algt}
  \sum_{A\subseteq V(F)} u^{e_F(A)}v^{e_F(A\comp)}s^{e_F(A,A\comp)} t^{|A|}
=\sum_{k=0}^m\binom mk \xpar{a+bk}t^k,
\qquad t\in\bbR.
\end{equation}
Letting $t=q/(1-q)$ and multiplying by $(1-q)^m$, 
this is equivalent to
\begin{equation*}
\sum_{A\subseteq V(F)} u^{e_F(A)}v^{e_F(A\comp)}s^{e_F(A,A\comp)} 
q^{|A|}(1-q)^{m-|A|}
=\sum_{k=0}^m\binom mk \xpar{a+bk}q^k(1-q)^{m-k}
\end{equation*}
where the right hand side equals $a+bmq$ by an elementary calculation (or by
the formula for the mean of a binomial distribution).
The equivalence follows.

\ref{lalgq}$\iff$\ref{lalgx}: 
Take $q=1/x$, replace $A$ by $A\comp$ and interchange $u$ and $v$ to obtain
\begin{equation}
  \glxuvs(x) = x^m \gL_{F;v,u,s}(1/x).
\end{equation}
\end{proof}

\begin{remark}\label{R1}
  It follows from the proof that the polynomial $\gluvs(q)$ has degree 0,
\ie, is a (non-zero) constant
$\iff$ \eqref{alg} holds with $b=0$
$\iff$ $\tpsifw(\xxm)=a$ for some (non-zero) $a$.
As shown above in the proof of \refT{TA}, this happens 
for some $u,v,s\ge0$, not all equal,
only in the trivial
case $e(F)=0$. 
(This is an equivalent way of stating the algebraic part of the proof of
\refT{TA}, but we preferred to give a direct proof above without the present
machinery.)
Hence, if $e(F)>0$ and \ref{lalgq} holds, then the degree of
$\gluvs$ is exactly 1.
\end{remark}

\begin{remark}\label{Risola}
$\gluvs(q)$ is not changed if we add some isolated
vertices to $F$. Hence we may assume that $F$ has no isolated vertices.
\end{remark}

We note that the cases $k=0$ and $k=m$ of \eqref{alg} simply are
\begin{align}
  v^{e(F)}&=a, \label{alg0}
\\ 
  u^{e(F)}&=a+mb. \label{algm}
\end{align}
In particular, the assumption that not $a=b=0$ means that not  $u=v=0$.
(This case has to be excluded, for any non-bipartite $F$, \cf{} \refR{RK3bi}.)

Moreover, if $F$ has degree sequence $d_1,\dots,d_m$, the cases $k=1$ and
$k=m-1$ of \eqref{alg} are
\begin{align}
\frac{1}m\sumim  v^{e(F)-d_i}s^{d_i}&=a+b, \label{alg1}
\\ 
\frac{1}m\sumim  u^{e(F)-d_i}s^{d_i}&=a+(m-1)b. \label{algm-1}
\end{align}

\begin{example}\label{EK2}
If $F=K_2$, then by \eqref{algq},
\begin{equation*}
  \gluvs(q)=uq^2+2sq(1-q)+v(1-q)^2=v+2(s-v)q+(u+v-2s)q^2,
\end{equation*}
which has degree 1 if we choose any distinct $u$ and $v$ and let $s=(u+v)/2$.
Hence \refL{Lalg} shows that
$\ctP(K_2;1/2,1/2)$ is not \qr, as we already know, see \refE{ERK2}.

In this case, $\Psi_{K_2,W}(x_1,x_2)=W(x_1,x_2)$, so
\refL{LW}\ref{lw2}  shows that
$\ctpw(K_2;1/2,1/2)$ holds if and only if $W(x,y)=h(x)+h(y)$ for some
measurable $h:\oi\to\oi$ with $\intoi h=p/2$, see further
\citet[Section 9]{SJ234}.
\end{example}

\begin{remark}\label{R2}
We may add some further conditions on $u,v,s$ in
\refL{Lalg}\ref{lalguvs}--\ref{lalgx}. 
In the trivial case $e(F)=0$ we can take any $u,v,s$, so let us assume
$e(F)>0$. By \refR{R1}, we then must have $b\neq0$, so
by \eqref{alg0}--\eqref{algm}, $u\neq v$.
Furthermore, 
we may interchange $u$ and $v$ (and replace $q$ by $1-q$ in \eqref{algq}),
so we may assume $u<v$. In this case, \eqref{alg0}--\eqref{algm} yield $b>0$.
By \eqref{alg1} and \eqref{alg0}, this implies $s>v$, and
by \eqref{algm-1} and \eqref{algm}, it implies $s<u$.
Hence we may assume $v<s<u$.


Suppose $v=0$. Then $a=0$ by \eqref{alg0}. 
By  \refR{Risola}, we may  assume that $F$ has no isolated vertices. 
If $d_i<e(F)$ for all $i$, then
\eqref{alg1} yields $0=a+b=b$, which is impossible. Hence we must have
$d_i=e(F)$ for some $i$, which means that $F$ is a star.
In the case of a star with $m=|F|\ge3$, $v=a=0$ in \eqref{alg1} yields
$s^{m-1}=mb$, while 
\eqref{algm} yields $u^{m-1}=mb$ so $u=s$, a contradiction.
Hence $v=0$ is impossible and we may assume $v>0$.
(If $m=2$, so  $F=K_2$, $v=0$ is possible, but we may choose any $v>0$ and
$u>v$ by \refE{EK2}.) 

Consequently, it suffices to consider distinct $u,v,s>0$, and we may
assume $0<v<s<u$ (or, by symmetry, $0<u<s<v$).

Furthermore, the equations \eqref{alg} are homogeneous in $(u,v,s)$, so we
may assume that any given of them equals 1; for example, we may
assume  $v=1$, which implies $a=1$ by \eqref{alg0}.
\end{remark}

\section{Completing the proof of \refT{T=}}\label{Spf1}

We say that a graph $F$ is \emph{good} if, for every $p\in(0,1]$, 
$\ctP(F;\mmm)$ is \pqr; otherwise $F$ is \emph{bad}.
In this terminology, \refL{Lalg} says 
(using \refR{R2})
that $F$ is bad if and only if there
exist distinct $u,v,s>0$  such
that
\eqref{alg} holds, or, equivalently, that $\gluvs(q)$ 
in \eqref{algq} has degree at most 1.

An empty graph, i.e., a graph $F$ with $e(F)=0$,
is trivially bad; in this case \eqref{algq} yields $\gluvs(q)=1$, so
$\gluvs$ has 
degree 0. By \refR{R1}, this is the only case when $\deg(\gluvs)=0$.

The single edge $K_2$ is also bad, see Examples \ref{ERK2} and \ref{EK2}. 
More generally, any graph $F$ with $e(F)=1$ is bad by \refR{Risola}.

\refConj{Conj1} says that all other graphs are good.
We proceed to verify this in the cases given in \refT{T=}.

\begin{example}[regular graphs]\label{Ereg}
Suppose that $F$ is $d$-regular for some $d\ge1$, and that $m=|F|\ge3$.
(This includes the case $K_m$, $m\ge3$, considered by \cite{HuangLee}.)
Then $e(F)=dm/2$.

We use only \eqref{alg0}--\eqref{algm-1}; if we further simplify by assuming
$v=a=1$, as we may by \refR{R2}, we obtain, from \eqref{algm}--\eqref{algm-1},
\begin{align}
  u^{d m/2} &= 1+mb, \label{twm}
\\
 s^{d} &=1+b, \label{tw1}
\\
u^{d(m-2)/2} s^{d} &=1+(m-1)b,\label{twm-1}
\end{align}
and thus
\begin{equation}\label{bx}
  (1+(m-1)b)^m = (1+mb)^{m-2} (1+b)^{m}.
\end{equation}
However, the function 
\begin{equation}
h(x):=  (m-2)\log(1+mx) + m\log(1+x) - m\log(1+(m-1)x)
\end{equation}
(defined for $x>-1/m$)
has derivative 
\begin{equation}
  h'(x) = \frac{m(m-1)(m-2)x^2}{(1+x)(1+(m-1)x)(1+mx)} >0
\end{equation}
for $x>-1/m$ with $x\neq 0$, and thus $h(x)$ is strictly increasing on
$(-1/m,\infty)$ and $h(x)\neq h(0)=0$ for $x\neq 0$, which shows that
\eqref{bx} implies $b=0$, and thus $s=u=1=v$ by \eqref{twm}--\eqref{twm-1},
a contradiction. Consequently, there are no $u,v,s$ satisfying the
conditions and thus $F$ is good.
\end{example}

\begin{example}[stars] \label{Estar}
Suppose that $F$ is a star $K_{1,m-1}$. 
Let $A\subseteq V(F)$ and let $k:=|A|$.
If $A$ contains the centre of $F$, then 
$e_F(A)=k-1$, $e_F(A\comp)=0$ and $e_F(A,A\comp)=m-k$; 
otherwise,
$e_F(A)=0$, $e_F(A\comp)=m-k-1$ and $e_F(A,A\comp)=k$.
It follows from \eqref{algx} and the binomial theorem that
\begin{equation}\label{q0}
  \glxuvs(x)=
(x-1)\bigpar{u(x-1)+s}^{m-1} + \bigpar{s(x-1)+v}^{m-1}.
\end{equation}

Assume $m\ge3$, and that $F$ is bad. Then, by \refL{Lalg}\ref{lalgx} and
\refR{R2}, there exist distinct $u,v,s>0$ 
such that $\glxuvs(x)$ is divisible by $x^{m-1}$.
In particular,
\begin{equation}\label{qa}
0=  \glxuvs(0)=- \xpar{s-u}^{m-1}+(v-s)^{m-1}.
\end{equation}
Hence
$(s-u)^{m-1} =(v-s)^{m-1}$ and thus 
$|s-u|=|v-s|$, and since $u,v,s$ are real, $s-u=\pm(v-s)$. 
However, we assume $u\neq v$ and thus $s-u\neq s-v$. Consequently,
$s-u=v-s$.

We may further assume $s=1$, and thus $u=1-y$ and $v=1+y$ for some 
$y\neq 0$. Thus, by \eqref{q0}, 
\begin{equation}\label{q3}
  \glxuvs(x)=
(x-1)\bigpar{(1-y)x+y}^{m-1} + \bigpar{x+y}^{m-1}.
\end{equation}
Since $m\ge3$, $\glxuvs(x)$ is divisible by $x^2$, so the derivative
$\glxuvs'(0)=0$. Hence, 
\begin{equation}
  \begin{split}
0=\glxuvs'(0)&=
y^{m-1}-(m-1)(1-y)y^{m-2}+(m-1)y^{m-2}
\\&
=my^{m-1}\neq0.
  \end{split}
\end{equation}
This is a contradiction, which shows that $F=K_{1,m-1}$ is good when $m\ge
3$.
(For $m=2$, $K_{1,1}=K_2$ is bad, as remarked above.)
\end{example}

\begin{example}[disconnected graphs]\label{Edis}
  Suppose that $F=\bigcup_{i=1}^k F_i$ is disconnected with components
  $F_1,\dots,F_k$. It follows easily from \eqref{algq} that then
  \begin{equation}
\gL_{F;u,v,s}(q)
= \prod_{i=1}^k \gL_{F_i;u,v,s}(q).
  \end{equation}
A component $F_i$ with $|F_i|=1$ has 
$\gL_{F_i;u,v,s}(q)=1$ and can be ignored, as said in \refR{Risola}.
On the other hand, if $|F_i|\ge2$, and thus $e(F_i)>0$, then by \refR{R1},
$\gL_{F_i;u,v,s}(q)$ has degree at least 1 whenever $u,v,s\ge0$ 
are not all equal.
Consequently, if there are at least 2 components with more than one vertex,
then $\gluvs(q)$ has degree at least 2, and thus $F$ is good.
\end{example}

This ends our (short) list of classes of graphs that are known to be good,
and completes the proof of \refT{T=}.
We can give further examples of individual small good graphs $F$ as follows.

\begin{example}[computer algebra]\label{Edd}
Fix a graph $F$ and
consider again the four equations \eqref{alg0}--\eqref{algm-1}.
If we set $s=1$ (see \refR{R2}), we can eliminate $a$ and $b$ and obtain the
two equations
\begin{align}
  \sumim  u^{e(F)-d_i}&=(m-1)u^{e(F)}+v^{e(F)}, \label{ddu}
\\  
\sumim  v^{e(F)-d_i}&=u^{e(F)}+(m-1)v^{e(F)} \label{ddv}
.
\end{align}
Since these are two  polynomial equations in two unknowns, there are
plenty of complex solutions $(u,v)$. However, if $F$ is bad, then by
\refL{Lalg} and \refR{R2} there exists a solution with $0<u<1<v$,
and by symmetry another solution with $0<v<1<u$.
Using computer algebra (in our case \maple), we can check this by writing
\eqref{ddu}--\eqref{ddv} as $f_1(u,v)=0$ and $f_2(u,v)=0$
and then computing the resultant $R(u)$ of $f_1(u,v)$ and $f_2(u,v)$ as
polynomials in $v$. Then the roots of $R(u)$ are exactly the values $u$ such
that \eqref{ddu}--\eqref{ddv} have a solution $(u,v)$ for some $v$.
Hence, if $F$ is bad, then $R(u)$ has at least one root in 
the interval $(0,1)$ and at least one root in $(1,\infty)$.
Consequently, if we compute the number of roots of $R(u)$
in $(0,1)$ and  in $(1,\infty)$
(by Sturm's theorem, this can be done using exact integer arithmetic),
and one of these numbers is $0$, then $F$ is good.

In general, this is perhaps too much to hope for. But even if there are
such roots, we can proceed by 
calculating the roots numerically. 
If the roots of $R(u)$ in  $(0,1)$ are $u_1,\dots,u_p$ and the roots in
$(1,\infty)$ are $v_1,\dots,v_q$,
then a solution of \eqref{ddu}--\eqref{ddv} with $0<u<1<v$ has to be one of
$(u_i,v_j)$; 
hence, if we check the pairs $(u_i,v_j)$ one by one and find that
none satisfies both \eqref{ddu} and \eqref{ddv}, then $F$ is good.
(Assuming that the computer calculations
are done with enough accuracy. It might be possible to find an algorithm
using exact arithmetic to 
test whether \eqref{ddu} and \eqref{ddv} have a common solution in
$(0,1)\times(1,\infty)$, but we have not investigated that.) 


We give some explicit examples where this method succeeds.
\end{example}

\begin{example}[paths] \label{Epath}
The path $P_2=K_2$ is bad, and the path $P_3=K_{1,2}$ is good by \refE{Estar}.  
For $F=P_4$ we have $m=4$, $e(F)=3$ and the degree sequence $1,2,2,1$.
The equations \eqref{ddu}--\eqref{ddv} are
$2u+2u^2=3u^3+v^3$ and $2v+2v^2=u^3+3v^3$, and the resultant 
$R(u)=
-512\,{u}^{9}+1152\,{u}^{8}+288\,{u}^{7}-1872\,{u}^{6}+288\,{u}^{5}+
976\,{u}^{4}-112\,{u}^{3}-192\,{u}^{2}-16\,u.
$
In this case, $R(u)$ has no roots in $(0,1)$, so $P_4$ is good.

For $P_5$, the resolvent $R(u)$ (now of degree 16) has a single root in
$(0,1)$, but no root in $(1,\infty)$, so $P_5$ is good.
(As an illustration, the root in $(0,1)$ is
$u=0.23467\dots$; for this root, \eqref{ddu} and \eqref{ddv}
have a common root $v=-0.65039\dots$, but no common root in $(1,\infty)$.)

We have investigated $P_m$ for  $4\le m\le20$, and the same pattern holds:
For even $m$,  the resolvent has no root in $(0,1)$ (but one root in
$(0,\infty)$). 
For odd $m$,  the resolvent has no root in $(1,\infty)$ (but one root in
$(0,1)$).
In both cases, $P_m$ is good. 

We conjecture that this pattern holds for all $m\ge4$.
\end{example}

\begin{example}[Graphs of size $|F|=4$]
Of the 9 graphs with $m=|F|=4$ and $e(F)>1$, 3 are disconnected (\refE{Edis}),
2 more are regular (\refE{Ereg}), 1 is a star (\refE{Estar}) and 1 is a path
(\refE{Epath}). The two remaining ones have degree sequences 
$(3,2,2,1)$ and $(3,3,2,2)$. In both cases, the resolvent $R(u)$ has no root
in $(0,1)$. Thus every $F$ with $|F|=4$ and $e(F)>1$  is good.
\end{example}

\begin{example}[complete bipartite graphs]\label{EKmn}
We have used the method in \refE{Edd} to verify that the
complete bipartite graphs
$K_{2,n}$ ($n\le 8$),  
$K_{3,n}$ ($n\le 7$),  
$K_{4,n}$ ($n\le 5$)
are good. In all cases, the resolvent $R(u)$ lacks roots in either $(0,1)$
or $(1,\infty)$, and sometimes in both. 
(For example, for $K_{2,n}$, there is no root in
$(1,\infty)$ for any $n\le8$, and a root in $(0,1)$ only for $n=4$ and
$n=8$. It is not clear whether this extends to larger $n$.)
\end{example}

\begin{remark}
We have so far not found any example with $e(F)>1$ where the method in
  \refE{Edd}  fails. 
We thus guess that if $e(F)>1$, then \eqref{ddu}--\eqref{ddv} have
  no common root with $0<u<1$ and $1<v<\infty$. (Equivalently,
  \eqref{alg0}--\eqref{alg1} have no common root with $0<u<s<v$.)
However, note that even if there is a graph $F$ for which this fails, $F$
still may be good since, if $m>3$, there are $m-3$ more equations
\eqref{alg} that have to be satisfied, which seems very unlikely.
In Examples \ref{Edd}--\ref{EKmn} we consider only the equations that only
depend on the degree sequence. 
\end{remark}

\section{More parts than vertices}\label{Smore}

\citet{ShapiraYuster:hyper} and
\citet{HuangLee} 
considered also (for $F=K_m$) the case of a partition
$\uur$ of $V(G_n)$ with $r>m$, where they count the number of copies of
$K_m$ with at most one vertex in each part $U_i$.

We can extend this to arbitrary graphs $F$ 
(as in \cite[Question 5.1]{HuangLee}).
In our notation this is the same as considering
(counting labelled copies and dividing by $m!$, where $m=|F|$)
\begin{equation}\label{nr}
  \sumiir \tN(F,G;\xim U),
\end{equation}
and we define the property $\ctP(F;\garr)$ 
for a sequence $(G_n)$ to mean
\begin{equation}\label{pr}
  \sumiir \tN(F,G_n;\xim U) = p\eef \sumiir\prodjm |U_{i_j}| +o\bigpar{|G_n|)^m}
\end{equation}
for all disjoint subsets $\uur$ of $V(G_n)$ with $|U_i|=\floor{\ga_i|G_n|}$,
$1\le i\le r$.
(For $r=m$, this yields the same property as before.)

In the case $0<p<1$, $r\ge m\ge3$, $F=K_m$ and $\sum_{i=1}^r\ga_i=1$.
\citet{ShapiraYuster:hyper} ($(\garr)\neq(1/r,\dots,1/r))$ and
\citet{HuangLee} ($(\garr)=(1/r,\dots,1/r))$ showed that this property is \pqr.
We can extend this as follows.

\begin{theorem}\label{Tr}
Let $F$ be a graph with $e(F)>0$, 
and let $0<p\le1$.
Further, let  $(\garr)$ be a vector of positive numbers of length 
$r\ge m=\ff$ with
$\sumir\ga_i\le1$.
If either  $(\garr)\neq(1/r,\dots,1/r)$
or $F$ is as in \refT{T=}, then $\ctP(F;\garr)$ is a \pqr{} property.
\end{theorem}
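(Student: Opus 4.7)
The strategy is to extend the graphon method of Theorems \ref{TA} and \ref{T=} to the $r\ge m$ setting. First I would prove the analogues of Lemmas \ref{LA1c} and \ref{Lequiv}: if $G_n\to W$, then $\ctP(F;\garr)$ is equivalent to the graphon condition
\begin{equation*}
\sumiir\int_{A_{i_1}\times\dots\times A_{i_m}}\tpsifw
= p^{e(F)}\sumiir\prod_{j=1}^m \gl(A_{i_j})
\end{equation*}
for all disjoint $(A_1,\dots,A_r)$ with $\gl(A_i)=\ga_i$, and the property is \pqr{} iff this forces $W=p$ a.e. Writing $f:=\tpsifw-p^{e(F)}$ (symmetric on $\oi^m$), I must analyze the resulting homogeneous sum condition.

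The key new ingredient is a generalization of Lemmas \ref{L3+} and \ref{Lmain}: for symmetric integrable $f$ on $\oi^m$, the sum condition implies $f=0$ a.e.\ when $(\garr)\ne(1/r,\dots,1/r)$, and $f(x_1,\dots,x_m)=\sum_{i=1}^m g(x_i)$ a.e.\ with $\int g=0$ when $(\garr)=(1/r,\dots,1/r)$. I would prove this by induction on $r\ge m$, with base case $r=m$ given by the existing lemmas. When $\sum\ga_i<1$, a \emph{subtraction trick} suffices: fix $A_1,\dots,A_{r-1}$ and compare the condition for two choices $A_r,A_r'$ of measure $\ga_r$ in the complement; the $i_m<r$ terms cancel, leaving
\begin{equation*}
\sum_{i_1<\dots<i_{m-1}\le r-1}\int_{A_{i_1}\times\dots\times A_{i_{m-1}}}\delta h = 0,
\end{equation*}
where $\delta h(y_1,\dots,y_{m-1}):=\int_{A_r}f(y,z)\,dz-\int_{A_r'}f(y,z)\,dz$ is symmetric in $y$. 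This is an instance of the same type with $(m,r)$ replaced by $(m-1,r-1)$ and measures $(\ga_1,\dots,\ga_{r-1})$; choosing the removed index so that the reduced tuple stays non-equipartition or has sum $<1$, the inductive hypothesis forces $\delta h=0$ a.e., and then varying $A_r,A_r'$ and applying the $m=1$ case of Lemma \ref{L3+} yields $f\equiv 0$ a.e.

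When $\sum\ga_i=1$ the subtraction trick fails because $A_r$ is forced to be the complement of the other parts. Instead I would adapt the proof of Lemma \ref{Lmain} directly: fix $A_3,\dots,A_r$ and vary the bipartition of $B:=\oi\setminus\bigcup_{i\ge 3}A_i$ into $A_1,A_2$ of sizes $\ga_1,\ga_2$. The terms with $|\{i_1,\dots,i_m\}\cap\{1,2\}|\le 1$ combine, after summation, to a quantity independent of this split, so the terms with $\{1,2\}\subseteq\{i_1,\dots,i_m\}$ must contribute $\int_{A_1\times A_2}H=\mathrm{const}$ for a symmetric auxiliary function $H$ on $B\times B$ depending on $A_3,\dots,A_r$. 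By Lemma \ref{L3+}\ref{lw1}$'$ (if $\ga_1\ne\ga_2$, using part (ii) of that lemma on a rescaling of $B$) or the $m=2$ case of Lemma \ref{Lmain} rescaled to $B$ (if $\ga_1=\ga_2$, as in the equipartition case), $H$ has constant or additive form on $B\times B$. Running through the localization and Fourier-analysis steps parallel to Steps \ref{stepb}--\ref{stepe} of the proof of Lemma \ref{Lmain} would then extract the desired conclusion for $f$. Once the analytic step is in hand, the algebraic reductions of Sections \ref{Stwo}--\ref{Salg} apply verbatim: Theorems \ref{TD0} and \ref{TD} reduce to a $2$-type graphon, and the polynomial analysis of $\gluvs$ via Lemma \ref{Lalg} forces $W=p$ a.e., using $e(F)>0$ in the non-equipartition case and the hypothesis that $F$ is as in Theorem \ref{T=} in the equipartition case.

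\textbf{The main obstacle} is the equipartition analytic step: the $r>m$ adaptation of Lemma \ref{Lmain} requires re-running several of its delicate sub-steps for the richer auxiliary function $H$ collecting $\binom{r-2}{m-2}$ integrals from the sum $\sumiir$; in particular, verifying that the Fourier-based Step \ref{stepe} still goes through and yields an \emph{a priori} additive structure on all of $\oi^m$ (not merely on sub-partitions) is the most technical part of the argument. The non-equipartition induction and the subsequent algebra should be essentially mechanical extensions of the arguments already in the paper.
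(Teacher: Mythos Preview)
Your overall architecture (transfer to graphons, then an analytic lemma, then the algebra of Sections \ref{Stwo}--\ref{Salg}) matches the paper, but the paper bypasses both your induction on $r$ and your ``main obstacle'' with two short reductions.

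For the equipartition case $(\garr)=(1/r,\dots,1/r)$, the paper does not touch \refL{Lmain} at all: it observes that $\ctP(F;\garr)$ coincides with the ordinary property $\ctP(F_*;1/r,\dots,1/r)$ where $F_*$ is $F$ together with $r-m$ isolated vertices (so $|F_*|=r$), and by \refR{Risola} this is \pqr{} iff $\ctP(F;\mmm)$ is; the result thus reduces immediately to \refT{T=}. For the non-equipartition case, instead of proving a sum-version of \refL{L3+} on $\oi^m$, the paper \emph{lifts} to $\oi^r$: defining
\[
\tpsiqqfw(\xxr):=\sumiir \tpsifw(\xim x)\prod_{k\notin\{i_1,\dots,i_m\}}\ga_k\qw,
\]
the graphon condition becomes a single equality $\int_{A_1\times\dots\times A_r}\tpsiqqfw=\text{const}$, to which the existing \refL{L3+} applies verbatim on $\oi^r$, giving $\tpsiqqfw$ a.e.\ constant. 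A one-line Fourier observation then descends back to $\oi^m$: the coefficient $\bigpar{\tpsiqqfw}\sphat(n_1,\dots,n_m,0,\dots,0)$ is a positive multiple of $\bigpar{\tpsifw}\sphat(n_1,\dots,n_m)$, so $\tpsifw$ is a.e.\ constant, and the argument of \refT{TA} finishes.

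Your subtraction-and-induction route may be salvageable in the non-equipartition case, though there is bookkeeping you gloss over (after fixing $A_r,A_r'$ the remaining sets must lie in $\oi\setminus(A_r\cup A_r')$, so the inductive hypothesis is applied on a smaller ambient space whose rescaled parameters must still satisfy the needed inequality). But the adaptation of \refL{Lmain} that you identify as the hard part is genuinely avoidable: neither of the paper's two reductions requires any new analytic lemma at all.
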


\begin{proof}
  The case $(\garr)=(1/r,\dots,1/r)$ is simple; in this case (and more
  generally when all $\ga_i$ are equal), it is easy to see that 
$\ctP(F;\garr)$ is the same as $\ctP(F_*;\garr)$, where $F_*$ is the graph
  with $r$ vertices obtained by adjoining $r-m$ isolated vertices to $F$; by
  \refL{Lalg} and \refR{Risola}, this property is \pqr{} if and only if 
$\ctP(F;\mmm)$ is, so the result in this case is equivalent to \refT{T=}.

In general, we note first that Lemmas \ref{LA1c} and \ref{Lequiv} extend 
(with the same proofs) and show that it is equivalent to consider the
property of graphons
\begin{equation}\label{rpsi}
\sumii \int_{A_{i_1}\times\dots\times A_{i_m}} \tpsiqfw(\xim x)
=p\eef\sumii\prodjm \gl(A_{i_j})    
\end{equation}
for all disjoint subsets $\aar$ of $\oi$ with $\gl(A_i)=\ga_i$.

Assume this and define
\begin{equation}\label{tpsiqq}
  \tpsiqqfw(\xxr):=\sumiir \tpsiqfw(\xim x)
\prod_{k\notin\set{i_1,\dots,i_m}}\ga_k\qw.
\end{equation}
Then \eqref{rpsi} can be written
\begin{equation}\label{rpsix}
 \int_{A_{1}\times\dots\times A_{r}} \tpsiqqfw(\xxr)
=p\eef\sumii\prodjm \ga_{i_j}    
\end{equation}
for all such subsets $\aar$.
Suppose now $(\garr)\neq(1/r,\dots,1/r)$. Then \refL{L3+} applies 
(to $\tpsiqqfw-\gam$ for a suitable constant $\gam$) and shows that
$\tpsiqqfw(\xxr)$ is \aex{} constant.
Hence, if $n_1,\dots,n_m$ are integers, not all 0, then thus the Fourier
coefficient
$\bigpar{\tpsiqqfw}\sphat(n_1,\dots,n_m,0,\dots,0)=0$.
However, it follows easily from \eqref{tpsiqq} and symmetry
that this Fourier coefficient is a
positive multiple of 
the Fourier coefficient 
$\bigpar{\tpsiqfw}\sphat(n_1,\dots,n_m)$.
Hence $\bigpar{\tpsiqfw}\sphat(n_1,\dots,n_m)=0$ when
$(n_1,\dots,n_m)\neq(0,\dots,0)$, and thus $\tpsiqfw$ is \aex{} constant;
it follows from \eqref{rpsi} that the constant must be $p\eef$.
By the proof of \refT{TA} (or by \refL{LW} and \refT{TA}), this implies
$W=p$ a.e. Consequently, \eqref{rpsi} for disjoint  $\aar$ with
$\gl(A_i)=\ga_i$ is a \pqrp, and thus so is $\ctP(F;\garr)$.
\end{proof}

\begin{example}[multicuts]
Consider the case $F=K_2$. Then the sum \eqref{nr} is
the number of edges with endpoints in two different sets $U_i$ and $U_j$;
we can call this a \emph{multicut}.
By \refT{Tr}, as proved already by \citet{ShapiraYuster:hyper} (see also
\citet{HuangLee}), 
the corresponding multicut property 
$\ctP(K_2;\garr)$ is a \pqr{} property
for any $(\garr)\neq(1/r,\dots,1/r)$. 
However,
$\ctP(K_2;1/r,\dots,1/r)$ is not \pqr{}, which is shown by the same 
counterexamples as for the case $r=2$ in \refE{EK2}.
\end{example}

If \refConj{Conj1} holds, then 
$\ctP(K_2;1/r,\dots,1/r)$ is essentially the only case when 
$\ctP(F;\garr)$ is not \pqr{}.


\section{Less parts than vertices}\label{Sless}

As said in \refR{R3}, 
it is interesting to study 
the subgraph counts
$N(F,G_n;\uum)$ and $\tN(F,G_n;\uum)$ 
also in situations
with other restrictions on 
the subsets $\uum$ than the ones considered above.
In particular, we may consider the case when the sets $U_i$ may be repeated, but
otherwise are disjoint. 
(We may also consider even more general situations when sets $U_i$ may
overlap partly in some prescribed ways, but that will not be treated here.)
This suggests the following general formulation:

Let $r\ge1$ and let $m_1,\dots,m_r$ be given non-negative integers
with 
$m_1+\dots+m_r=m=|F|$, and consider for a sequence of disjoint subsets
$U_1,\dots,U_r$ of $V(G)$, the following three subgraph counts:
\begin{romenumerate}[-10pt]
\item \label{lessa}
$N(F,G;U_1^{m_1},\dots,U_r^{m_r})$, defined as
$N(F,G;U_1,\dots,U_r)$ where the subset $U_i$ is repeated $m_i$ times.
(For a given labelling of $F$.)
\item 
$\tN(F,G;U_1^{m_1},\dots,U_r^{m_r})$, defined as
the average of $N(F,G;U_1^{m_1},\dots,U_r^{m_r})$ over all labellings
of $F$.
This equals, up to the constant symmetry factor $|\aut(F)|\prod_i m_i!/m!$, 
the number of copies of $F$ in $G$ that have exactly $m_i$ vertices in $U_i$.
(For each such copy of $F$, there are $\prod_i m_i!$ labellings of
$F$ for which it is counted, and the total number of labellings of $F$ is
$m!/|\aut(F)|$.) 
\item 
$\xN(F,G;U_1,\dots,U_r;m_1,\dots,m_r)$
defined as the further average of \\
$\tN(F,G;U_1^{m_1},\dots,U_r^{m_r})$
over all permutations of $m_1,\dots,m_r$.
\end{romenumerate}

We then define properties $\cP_{m_1,\dots,m_r}(F;\garr)$,
$\ctxP{m_1,\dots,m_r}(F;\garr)$ and
$\cxxP{m_1,\dots,m_r}(F;\garr)$ in analogy with \refD{DP}, 
considering all families of disjoint $\uur$ with $|U_i|=\floor{\ga_i|G_n|}$.
If $F=K_m$, then $\tN=N$ and thus
$\cP_{m_1,\dots,m_r}=\ctxP{m_1,\dots,m_r}$, but in general we do not know
any implication, \cf{} \refR{RX}.

\begin{example}
Note first that this formulation includes the problems studied earlier in
the paper:
\begin{alphenumerate}[-10pt]
\item 
For $r=m$ and $m_1=\dots=m_r=1$, we recover the main subject of the paper,
see \refS{Smain}.
(In this case, $\xN=\tN$.)
\item 
For $r>m$ and 
$m_i=1$ for $1\le i\le m$,
$m_i=0$ for $m+1\le i\le r$,
$\xN$ equals, up to
an unimportant constant factor, the sum \eqref{nr} studied in \refS{Smore}.
Thus
$\cxxP{m_1,\dots,m_r}(F;\garr)=\ctP(F;\garr)$.
\item 
For $r=1$ (and thus $m_1=m$), we consider $N(F,G;U,\dots,U)$ as in
\citet{SS:nni} (where $|U|$ is unspecified, see \refT{TSS}),
\citet{Shapira} and \citet{Yuster}.
\end{alphenumerate}  
\end{example}

The new case of main interest in the formulation above is 
thus $1<r<m$, with $2\le m_i <m$ for some $i$; thus some set $U_i$ is
repeated, but all are not equal.
In the remainder of this section, we consider a simple, but hopefully
typical, example of this, \viz{} $m=3$, $r=2$ and  $(m_1,m_2)=(2,1)$.

Thus,
assume that $m=|F|=3$.
For $\ga,\gb>0$ with $\ga+\gb\le1$,  the properties
$\cPz(F;\ga,\gb)$ and  $\ctPz(F;\ga,\gb)$ mean that
\eqref{l0} and \eqref{l0t}, respectively, hold for all $U_1,U_2,U_3$ with
$U_1=U_2$ but disjoint from $U_3$, and $|U_1|=\floor{\ga|G_n|}$,
$|U_3|=\floor{\gb|G_n|}$. In the case $\ga+\gb=1$, we can equivalently
assume that $U_1=U_2=U$ and $U_3=V(G_n)\setminus U$ with $|U|=\floor{\ga|G_n|}$.
(For $F=K_3$, this means that we count triangles crossing the the cut
$(U,\,V(G_n)\setminus U)$, with exactly two vertices in $U$.)
Are these properties \pqr?

The analogue of \refL{LA1c} holds, and thus we can as in \refL{Lequiv}
transfer the problem to graphons and the properties defined by \eqref{e33}
or \eqref{e34} for all $A_1,A_2,A_3$ with $A_1=A_2$ and disjoint from $A_3$,
and $\gl(A_1)=\ga$, $\gl(A_3)=\gb$.

Consider first $\ctPz(F;\ga,\gb)$.
In the case $\ga+\gb<1$, we have the following results, similar to the ones
above. 

\begin{lemma}\label{LQ}
  Let $\ga,\gb>0$ with $\ga+\gb<1$.
Suppose that $f:\oi^3\to\bbC$ is a symmetric integrable function
such that 
\begin{equation}\label{3sw}
\int_{A \times A\times B} f=0  
\end{equation}
for all disjoint subsets $A$ and $B$ of\/ $\oi$
such that
$\gl(A)=\ga$ and $\gl(B)=\gb$.
Then 
\begin{equation}\label{3fgae}
  f(\xxm)=0
\qquad a.e.
\end{equation}
\end{lemma}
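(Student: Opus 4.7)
The plan is to peel off the variables one by one using a first-order perturbation argument combined with \refL{L3}. The hypothesis $\ga+\gb<1$ gives us positive-measure room in $\oi\setminus(A\cup B)$ in which to vary $A$; the symmetry of $f$ in $(x_1,x_2)$ then collapses the first-order variation to a single directional derivative which the Lebesgue differentiation theorem can exploit.

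Concretely, fix disjoint $A,B\subset\oi$ with $\gl(A)=\ga$, $\gl(B)=\gb$, and for small $t>0$ choose $S^-\subset A$ and $S^+\subset\oi\setminus(A\cup B)$ with $\gl(S^\pm)=t$; set $A_t:=(A\setminus S^-)\cup S^+$, so that $A_t$ is disjoint from $B$ and has measure $\ga$. Expanding the identity $\int_{A_t\times A_t\times B}f=\int_{A\times A\times B}f\;(=0)$, using $\etta_{A_t}=\etta_A+\etta_{S^+}-\etta_{S^-}$ and the symmetry of $f$ in $(x_1,x_2)$, one obtains
\begin{equation*}
2\biggsqpar{\int_{S^+}F_B\dd x-\int_{S^-}F_B\dd x}+R(S^+,S^-)=0,
\end{equation*}
where $F_B(x_1):=\int_{A\times B}f(x_1,x_2,x_3)\dd x_2\dd x_3$ and the quadratic remainder $R$ is controlled by $\int_{(S^+\cup S^-)^2\times B}|f|$. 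Taking $S^\pm$ to be intervals of length $t$ shrinking to Lebesgue points $y^-\in A$, $y^+\in\oi\setminus(A\cup B)$, dividing by $t$ and letting $t\to0$, the first bracket tends to $F_B(y^+)-F_B(y^-)$ while $R/t\to0$. Hence $F_B$ is \aex{} constant on $\oi\setminus B$, and the hypothesis $\int_AF_B=\int_{A\times A\times B}f=0$ forces this constant to be $0$.

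Now $F_B(x_1)=\int_A\psi_B(x_1,\cdot)\dd x_2$ with $\psi_B(x_1,x_2):=\int_B f(x_1,x_2,x_3)\dd x_3$, so for every $A\subset\oi\setminus B$ of measure $\ga$ we have $\int_A\psi_B(x_1,\cdot)\dd x_2=0$ for \aex{} $x_1\in\oi\setminus B$. A standard quantifier-swap via a countable family of $A$'s dense in the symmetric-difference metric yields, for \aex{} $x_1\in\oi\setminus B$, that $\int_A\psi_B(x_1,\cdot)\dd x_2=0$ for \emph{all} $A\subset\oi\setminus B$ of measure $\ga$; since $\ga<1-\gb=\gl(\oi\setminus B)$, \refL{L3} (single-variable case, on the rescaled probability space $\oi\setminus B$) gives $\psi_B(x_1,x_2)=0$ for \aex{} $x_2\in\oi\setminus B$, and hence by Fubini $\psi_B=0$ \aex{} on $(\oi\setminus B)^2$, for every $B\subset\oi$ of measure $\gb$.

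A second density/quantifier-swap argument now over $B$ gives, for \aex{} $(x_1,x_2)\in\oi^2$, that $\int_B f(x_1,x_2,x_3)\dd x_3=0$ for every measurable $B\subseteq\oi$ with $\gl(B)=\gb$; a final application of \refL{L3} in the variable $x_3$, followed by Fubini, yields $f=0$ \aex{} on $\oi^3$. The main technical nuisance is the two quantifier exchanges: one has to produce countable families of subsets of $\oi$ of measure $\ga$ (resp.\ $\gb$) that are dense in the symmetric-difference metric, and verify that the set-indexed integrals are continuous under symmetric difference so that vanishing on the dense family extends to vanishing throughout.
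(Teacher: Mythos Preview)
Your argument follows the paper's route: fix $B$, show that $\psi_B(x_1,x_2):=\int_B f(x_1,x_2,x_3)\dd x_3$ vanishes a.e.\ on $(\oi\setminus B)^2$, and then peel off $B$ via \refL{L3}; the paper does the first step by quoting \cite[Lemma~7.6]{SJ234} (symmetric $g\in L^1(\oi^2)$ with $\int_{A^2}g=0$ for every $A$ of a fixed measure must vanish a.e.), while you unpack that lemma through a first-order perturbation.

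Two technical points deserve more care. First, taking $S^\pm$ to be intervals presupposes that $A$ and $\oi\setminus(A\cup B)$ contain intervals, which fails for general measurable $A,B$; either restrict at the outset to $A$ a finite union of intervals (this suffices, since such $A$ form a countable dense family for your subsequent quantifier swap) or work with density points and sets $S^-=A\cap I$. Second, and more substantively, the assertion $R/t\to0$ is not automatic for merely integrable $f$: the diagonal blocks $S^\pm\times S^\pm$ shrink to points $(y^\pm,y^\pm)$ on the diagonal of $\oi^2$, a null set on which the two-dimensional Lebesgue differentiation theorem says nothing. The claim is nonetheless true for a.e.\ $y^\pm$, and one clean justification is to split $h:=\int_B|f|\dd x_3$ as $h\etta_{\{h\le M\}}+h\etta_{\{h>M\}}$: the bounded part contributes $O(t^2)/t\to0$, while for the tail one bounds $\frac1t\int_{(S^\pm)^2}h\etta_{\{h>M\}}\le\frac1t\int_{S^\pm}H_M$ with $H_M(x_1):=\int_{\oi}h\etta_{\{h>M\}}\dd x_2$, which by one-dimensional Lebesgue differentiation tends to $H_M(y^\pm)$; since $\|H_M\|_1\to0$ as $M\to\infty$, this forces $\limsup_{t\to0}R/t=0$ a.e. You flag the quantifier swaps as the main nuisance, but this diagonal-differentiation step is where the real work hides.
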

\begin{proof}
 A minor variation of the proof of \refL{L3+}, using \citet[Lemma 7.6]{SJ234}.
We omit the details.
\end{proof}

\begin{theorem}\label{Tab}
Let $F$ be a graph with $|F|=3$ and $e(F)>0$,
 let $\ga,\gb>0$ with $\ga+\gb<1$
and let $0<p\le1$.
Then 
$\ctPz(F;\ga,\gb)$ is a \qr{} property.
\end{theorem}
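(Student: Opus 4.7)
The plan is to follow the same overall strategy as in the proof of \refT{TA}: first transfer the combinatorial condition to a condition on graphons; then use an analytic lemma to reduce the integral identity to a pointwise identity on $\tpsifw$; and finally invoke the algebraic machinery of \refT{TD0} to conclude that $W$ is \aex{} constant and equal to $p$.

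First, I would extend Lemmas \ref{LA1c} and \ref{Lequiv} to the present restricted setting. The proofs go through verbatim (one just integrates the $\tpsifw$ over the set $\set{(x_1,x_2,x_3):x_1,x_2\in A,\ x_3\in B}$ instead of $A_1\times A_2\times A_3$), and the outcome is that $\ctPz(F;\ga,\gb)$ is \pqr{} if and only if every graphon $W$ satisfying
\begin{equation*}
\int_{A\times A\times B}\tpsifw = p^{e(F)}\gl(A)^2\gl(B)
\end{equation*}
for all disjoint $A,B\subset\oi$ with $\gl(A)=\ga$, $\gl(B)=\gb$ is \aex{} equal to $p$.

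Next, set $f:=\tpsifw-p^{e(F)}$. Then $f$ is an integrable function on $\oi^3$ that is symmetric in its three variables (since $\tpsifw$ is symmetric by construction) and satisfies the hypothesis \eqref{3sw} of \refL{LQ}. Since $\ga+\gb<1$, that lemma applies and yields $\tpsifw(x_1,x_2,x_3)=p^{e(F)}$ for \aex{} $(x_1,x_2,x_3)\in\oi^3$.

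Finally, I would conclude exactly as in the proof of \refT{TA}\ref{ta1}. Writing $\tpsifw=\phiw$ for the polynomial $\Phi$ in the edge-weights coming from \eqref{psifw}--\eqref{symm}, we are in the situation of \refT{TD0}\ref{td0w} with $a=p^{e(F)}$. If $W$ is not \aex{} constant, \refT{TD0}\ref{td0uvs} supplies numbers $u,v,s\in\oi$, not all equal, satisfying \eqref{rang0ab} for every $A\subseteq[3]$. Taking $A=[3]$ and $A=\emptyset$ gives $u^{e(F)}=v^{e(F)}=p^{e(F)}$, hence $u=v=p$; taking $A=\set1$ makes $\Phi((w\xij)_{i<j})$ a polynomial in $s$ with non-negative coefficients and at least one monomial with a positive power of $s$ (since $e(F)>0$), hence strictly increasing in $s>0$, so its unique positive root at the value $p^{e(F)}$ is $s=p$. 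This forces $u=v=s=p$, contradicting the choice of $u,v,s$. Consequently $W=p$ \aex, as desired. The only place where anything could go wrong is the symmetry requirement of \refL{LQ}, which is the reason we work with $\tpsifw$ (and hence with $\ctPz$) rather than with $\psifw$ itself; this is the same obstruction discussed in \refR{RL3+}, and it is the reason we do not make a claim about $\cPz(F;\ga,\gb)$ here.
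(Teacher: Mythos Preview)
Your proposal is correct and follows exactly the approach sketched in the paper: transfer to graphons via the analogues of Lemmas~\ref{LA1c} and~\ref{Lequiv}, apply \refL{LQ} to force $\tpsifw=p^{e(F)}$ \aex, and then finish with the algebraic argument from the proof of \refT{TA} via \refT{TD0}. The only small omission is the easy case where $W$ is \aex{} a constant $w$ (then $w^{e(F)}=p^{e(F)}$ forces $w=p$), which you implicitly bypass when assuming $W$ is not \aex{} constant; this is handled explicitly in the proof of \refT{TA} and is trivial to insert here.
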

\begin{proof}
 Using \refL{LQ}, we argue as in the proof of \refT{TA} in
 \refS{Salg}. 
\end{proof}

The case $\ga+\gb=1$, and thus $B=A\comp$ in \eqref{3sw}, is more
intricate,  and therefore more interesting.
We note first that the counterexample in \refL{Lcounter} shows that
\refL{LQ} does not hold for $\ga=1-\gb=2/3$.
In fact, if $f(x_1,x_2,x_3)=g(x_1)+g(x_2)+g(x_3)$ with $\intoi g=0$ and
$|A|=\ga$, 
then
\begin{equation}
  \begin{split}
\int_{A \times A\times A\comp} f(x_1,x_2,x_3)
&=2\ga(1-\ga)\int_A g + \ga^2\int_{A\comp} g
\\&
=2\ga(1-\ga)\int_A g - \ga^2\int_{A} g
=\ga(2-3\ga)\int_A g,
  \end{split}
\end{equation}
which vanishes for every such $A$ if $\ga=2/3$.

Moreover, there is another counterexample for $\ga=1-\gb=1/3$:
Now consider 
$f(x_1,x_2,x_3)=g(x_1,x_2)+g(x_1,x_3)+g(x_2,x_3)$ for a 
symmetric function $g$ on $\oi^2$ 
such that $\intoi g(x,y)\dd y=0$ for every $x$.
Then
\begin{equation}
  \begin{split}
\int_{A \times A\times A\comp} f(x_1,x_2,x_3)
&=(1-\ga)\int_{A^2} g + 2\ga\int_{A\times A\comp} g
\\&
=(1-\ga)\int_{A^2} g - 2\ga\int_{A^2} g
=(1-3\ga)\int_{A^2} g,
  \end{split}
\end{equation}
which vanishes if $\ga=1/3$.

We conjecture that these are the only counterexamples.
\begin{conjecture}\label{Conj2}
Let $\ga\in(0,1)$ and 
suppose that 
$f:\oi^3\to\bbC$ is a symmetric integrable function such that
$\int_{A\times A\times A\comp} f=0$ for every $A\subset\oi$
  with $\gl(A)=\ga$.
  \begin{romenumerate}[-15pt]
  \item 
If\/ $\ga\notin\set{\frac13,\frac23}$, then $f=0$ a.e.

  \item 
If\/ $\ga=\frac13$, then 
$f(x_1,x_2,x_3)=g(x_1,x_2)+g(x_1,x_3)+g(x_2,x_3)$ \aex{} for a 
symmetric function $g$ on $\oi^2$ 
such that $\intoi g(x,y)\dd y=0$ for every $x$.

  \item 
If\/ $\ga=\frac23$, then 
$f(x_1,x_2,x_3)=g(x_1)+g(x_2)+g(x_3)$ \aex{}
for a  function $g$ on $\oi$ 
such that $\intoi g(x)\dd x=0$.
  \end{romenumerate}
\end{conjecture}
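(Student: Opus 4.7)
The strategy is to combine the Hoeffding decomposition of $f$ with a discretization that effectively replaces characteristic functions $\etta_A-\ga$ by arbitrary bounded mean-zero test functions, turning the hypothesis into a polynomial identity in a scaling parameter whose coefficients cleanly separate the Hoeffding components. Decompose
\begin{equation*}
  f(x_1,x_2,x_3) = c + \sum_{i=1}^3 h_1(x_i) + \sum_{1\le i<j\le 3} h_2(x_i,x_j) + h_3(x_1,x_2,x_3),
\end{equation*}
where $c=\int f$ and the symmetric kernels $h_k$ satisfy the Hoeffding marginal condition $\intoi h_k(y_1,\dots,y_{k-1},z)\dd z=0$. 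Using only these identities, a direct calculation rewrites the hypothesis $\int_{A\times A\times A\comp}f=0$ (valid for every $A$ with $\gl(A)=\ga$) as
\begin{equation*}
  \ga^2(1-\ga)c + \ga(2-3\ga)\int_A h_1 + (1-3\ga)\int_{A^2} h_2 - \int_{A^3} h_3 = 0. \qquad(\star)
\end{equation*}

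Given any bounded real $\phi$ on $\oi$ with $\intoi\phi=0$ and any $t>0$ with $t\|\phi\|_\infty<\min(\ga,1-\ga)$, I would partition $\oi$ into $N$ equal intervals $I_j$ and pick $A_N\subseteq\oi$ with $\gl(A_N\cap I_j)=\bigl(\ga+tN\int_{I_j}\phi\bigr)\gl(I_j)$, so $\gl(A_N)=\ga$. A Riemann-sum estimate (in which ``diagonal'' contributions from coincident tensor indices are of order $1/N$ while off-diagonal contributions converge to integrals), combined with the Hoeffding conditions (which kill every binomial cross term in the expansion $\etta_{A_N}^{\otimes k}=((\etta_{A_N}-\ga)+\ga)^{\otimes k}$), gives for $k=1,2,3$
\begin{equation*}
  \int_{A_N^k} h_k \;=\; \int h_k(\etta_{A_N}-\ga)^{\otimes k} \;\longrightarrow\; t^k\int h_k\,\phi^{\otimes k} \qquad (N\to\infty).
\end{equation*}
Feeding this into $(\star)$ and letting $N\to\infty$ yields, for every bounded mean-zero $\phi$ and every small $t>0$,
\begin{equation*}
  \ga^2(1-\ga)c + \ga(2-3\ga)t\int h_1\phi + (1-3\ga)t^2\int h_2\,\phi^{\otimes 2} - t^3\int h_3\,\phi^{\otimes 3} = 0.
\end{equation*}
As a polynomial identity in $t$ on an interval, each coefficient must vanish.

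To conclude, $c=0$; polarizing the quadratic and cubic identities and combining with the Hoeffding marginal conditions upgrades them to $\int h_2 v_1v_2=0$ and $\int h_3 v_1v_2v_3=0$ for all bounded $v_i$, whence $h_3\equiv 0$ unconditionally, $h_2\equiv 0$ unless $\ga=\tfrac13$, and $h_1\equiv 0$ unless $\ga=\tfrac23$. This yields $f=0$ for $\ga\notin\set{\tfrac13,\tfrac23}$, $f=\sum_{i<j}h_2(x_i,x_j)$ with $h_2$ symmetric and $\intoi h_2(x,y)\dd y=0$ for $\ga=\tfrac13$, and $f=\sum_i h_1(x_i)$ with $\intoi h_1=0$ for $\ga=\tfrac23$, which is precisely \refConj{Conj2}.

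The main obstacle will be the discretization step itself: $(\etta_{A_N}-\ga)^{\otimes k}$ does not converge pointwise to $(t\phi)^{\otimes k}$, so one must verify that the diagonal error terms really are negligible when tested against $h_k$. This uses both a uniform approximation of $h_k$ by continuous functions (to control oscillation on fine partitions) and the Hoeffding conditions $\intoi h_k\dd z=0$ (to annihilate cross terms that carry an integrated free variable); setting this up cleanly is the technical heart of the argument, while everything that follows it is in the spirit of the proofs of Lemmas \ref{L3+} and \ref{Lmain}.
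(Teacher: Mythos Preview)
The paper does not prove this statement: it is labelled a Conjecture and explicitly left as an open problem (``We leave this \ldots{} as an open problem''). The only approach the paper suggests is in \refR{Rrepresentation}, via a separate (and also open) conjecture that the only closed subspaces of $L^2\symm(\oi^m)$ invariant under all \mpb{s} of $\oi$ are direct sums of the layers $H\symm^{m,k}$; the paper notes that this would imply \refConj{Conj2} but does not pursue it.

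Your plan is a genuinely different, direct route, and it looks sound. The Hoeffding decomposition and the identity $(\star)$ are correct; the two exceptional coefficients $\ga(2-3\ga)$ and $(1-3\ga)$ already appear in the paper's calculations just before the conjecture, so $(\star)$ is consistent with what the authors had. The key new idea is the discretization: by choosing $A_N$ interval-by-interval so that $\int_{I_j}(\etta_{A_N}-\ga)=t\int_{I_j}\gf$, you convert the single constraint ``for all $A$ of measure $\ga$'' into a polynomial identity in $t$ whose coefficients isolate the Hoeffding layers one at a time. The off-diagonal Riemann-sum convergence and the $O(1/N)$ diagonal bound go through for continuous kernels, and since the test functions $(\etta_{A_N}-\ga)^{\otimes k}$ are uniformly bounded, the passage to $L^1$ kernels by density is immediate. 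The polarization step and the use of the Hoeffding marginal conditions to upgrade from mean-zero to arbitrary bounded test functions are standard and correct. If you execute the discretization cleanly---and you have correctly flagged it as the only genuine work---you will have resolved what the paper leaves open, by a method quite different from the representation-theoretic one it hints at.
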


We leave this (and extensions to $m>3$) as an open problem.
Note that if this conjecture holds, then 
\refT{Tab} holds also for $\ga+\gb=1$, provided $\ga\neq\frac13,\frac23$,
by the same proof as above.
For $\ga=1-\gb=\frac23$ we would have the same situation as in 
Lemmas \ref{Lmain} and \ref{Lalg}; from the discussion in \refS{Spf1}
follows that \refT{Tab} would hold if $e(F)\ge2$ ($P_2$ or $K_3$), but not
for $e(F)\le1$ ($K_2\cup K_1$ and the trivial empty graph $K_1\cup K_1\cup
K_1$). 
(Recall that we only consider $m=3$, as an example.)

For $\ga=1-\gb=\frac13$, even if the conjecture holds, it would lead to 
further open problems:
First, 
is there an analogue of 
Theorems \ref{TD0} and \ref{TD} for this case, showing that
if the property is not \qr, then there is a 2-type graphon counterexample?
(This seems likely if \refConj{Conj2} holds, using a suitable analogue of
\refL{LD} for this case.)
Secondly, analysis of a possible 2-type graphon counterexample would lead to a
different algebraic problem than the one in \refS{Salg}; we leave the
formulation and investigations of this as another open problem.

\begin{problem}
  Solve these problems for the case $\gb=1-\ga$, with particular attention
  to the cases $\ga=\frac13$ and $\frac23$, in particular for $F=K_3$ 
  (crossing triangles). Moreover, consider extensions for $m>3$.
\end{problem}

\begin{remark}\label{Rrepresentation}
Note that the set of functions satisfying the condition of Lemma \ref{L3},
\ref{Lmain} or \ref{LQ}, or \refConj{Conj2}, 
is invariant under all \mpb{s} of $\oi$.
This suggest the following approach, where we consider only square
integrable functions so that we can use Hilbert space theory.
Let, for $0\le k\le m$, $H^{m,k}$ be the subspace of 
$L^2(\oi^m)$ consisting of all functions $f$ such that the Fourier coefficient
$\hat f(n_1,\dots,n_m)$ vanishes unless exactly $k$ indices $n_1,\dots,n_m$
are non-zero. (In particular, $H^{m,0}$ is the space of constant functions.)
Let further 
$L^2\symm(\oi^m)$ be the subspace of symmetric functions in
$L^2(\oi^m)$, and let
$H^{m,k}\symm := H^{m,k}\cap L^2\symm(\oi^m)$.
Then 
\begin{equation}
 L^2\symm(\oi^m)=\bigoplus_{k=0}^m H\symm^{m,k}
\end{equation}
and each subspace $H\symm^{m,k}$ is invariant under \mpb{s} of $\oi$.
We conjecture that every closed 
subspace of $L^2\symm(\oi^m)$ invariant under all
\mpb{s} of $\oi$ is of the form 
$\bigoplus_{k\in A} H\symm^{m,k}$ for some set $A\subseteq\set{0,\dots,m}$.

If this holds, it is easy to verify \refConj{Conj2}. 

In support of this conjecture, note that a discrete analogue holds:
Let $N\ge m>0$ and consider the set $X_{N,m}$ of $m$-tuples of distinct elements
of $[N]$. If $N\ge 2m$, then 
the natural representation of the symmetric group $S_N$ in the 
$\binom Nm$-dimensional space of all symmetric functions on $X_{N,m}$
has $m+1$ irreducible components, which correspond to the sets
$H\symm^{m,k}$ above.
(This is easily verified by a calculation with the characters of these
representations. We omit the details.)
\end{remark}

Finally, for the property $\cPz(F;\ga,\gb)$, for a directed graph $F$ with
$|F|=3$, we have the same problems as before (unless $F=K_3$), see
\refR{RX}.
Consider for example $F=P_3$.
We may note that in \refL{LQ}, it suffices that $f$ is symmetric in the
first two variables; this implies by the argument above that if $F=P_3$ with
the central vertex labelled 3, then $\cPz(F;\ga,\gb)$ is \qr{}
(since then $\psifw$ is symmetric in the first two variables). However,
this argument fails for the other labellings of $P_3$.
The case $\ga+\gb=1$ seems even more complicated.

\begin{problem}
Is  $\cPz(P_3;\ga,\gb)$ a \qrp{} for any $\ga,\gb>0$ with $\ga+\gb<1$, for
any labelling of $P_3$?
Does this hold for $\ga+\gb=1$?
\end{problem}


\newcommand\AAP{\emph{Adv. Appl. Probab.} }
\newcommand\JAP{\emph{J. Appl. Probab.} }
\newcommand\JAMS{\emph{J. \AMS} }
\newcommand\MAMS{\emph{Memoirs \AMS} }
\newcommand\PAMS{\emph{Proc. \AMS} }
\newcommand\TAMS{\emph{Trans. \AMS} }
\newcommand\AnnMS{\emph{Ann. Math. Statist.} }
\newcommand\AnnPr{\emph{Ann. Probab.} }
\newcommand\CPC{\emph{Combin. Probab. Comput.} }
\newcommand\JMAA{\emph{J. Math. Anal. Appl.} }
\newcommand\RSA{\emph{Random Struct. Alg.} }
\newcommand\ZW{\emph{Z. Wahrsch. Verw. Gebiete} }
\newcommand\DMTCS{\jour{Discr. Math. Theor. Comput. Sci.} }

\newcommand\AMS{Amer. Math. Soc.}
\newcommand\Springer{Springer-Verlag}
\newcommand\Wiley{Wiley}

\newcommand\vol{\textbf}
\newcommand\jour{\emph}
\newcommand\book{\emph}
\newcommand\inbook{\emph}
\def\no#1#2,{\unskip#2, no. #1,} 
\newcommand\toappear{\unskip, to appear}

\newcommand\urlsvante{\url{http://www.math.uu.se/~svante/papers/}}
\newcommand\arxiv[1]{\url{arXiv:#1.}}
\newcommand\arXiv{\arxiv}

\def\nobibitem#1\par{}

\end{document}